\newcommand{\E}{\mathbb{E}}
\newcommand{\J}{\mathbb{I}}
\newcommand{\CB}{\mathcal{B}}
\newcommand{\CY}{\mathcal{Y}}
\def\CL { \mathcal{L}}
\def\CB {\mathcal{B}}
\def\CL { \mathcal{L}}
\def\CN {\mathcal{N}}
\def\b {\beta}
\def\a {\alpha}
\def \th{\theta}
\def\eps {\epsilon}
\def\a {\alpha}
\def\ra {\zeta}
\def\J {\mathbb{I}}
\def\U {\mathbb{U}}
\def\N {\mathbb{N}}
\def\E {\mathbb{E}}
\def\RE {\mathbb{R}}
\def\T {\mathbb{T}}
\newcommand{\fer}[1]{(\ref{#1})}
\def \la {\E [ }
\def \ra {] }
\def \laa {\E [ }
\def \raa {] }
\newtheorem{definition}{Definition}
\newtheorem{theorem}[definition]{Theorem}
\newtheorem{lemma}[definition]{Lemma}
\newtheorem{proposition}[definition]{Proposition}
\begin{document}

\title{Mean field dynamics of collisional processes with duplication, loss and copy }

\author{ Federico Bassetti, Giuseppe Toscani}

\maketitle

\begin{abstract}
In this paper we introduce and discuss kinetic equations for the evolution of the
probability distribution of the number of particles in a population subject to binary
interactions. The microscopic binary law of interaction is assumed to be dependent on
fixed-in-time random parameters which describe both birth and death of particles, and
the migration rule.  These assumptions lead to a Boltzmann-type equation that in the
case in which the mean number of the population is preserved, can be fully studied, by
obtaining in some case  the analytic description of the steady profile. In all cases,
however, a simpler kinetic description can be derived, by considering the limit of
quasi-invariant interactions.  This procedure allows to describe the evolution process
in terms of a linear kinetic transport-type equation. Among the various processes that
can be described in this way, one recognizes the Lea-Coulson model of mutation
processes in bacteria, a variation of the original model proposed by Luria and
Delbr\"uck.
\end{abstract}

\section{Introduction}

The description of emerging  collective behaviors and self-organization in multi-agent
interactions started to gain popularity in the recent years and it represents one of
the major challenges in contemporary mathematical modeling. In the biological context,
the emergent behavior of bird flocks, fish schools or bacteria aggregations, among
others, is a major research topic in population and behavioral biology and ecology
\cite{CFRT, CFTV, CM08, CS07, Cuc07, HT08}. Other important examples of emergent behaviors
describe building of tumors by cancer cells and their migration through the tissues
\cite{Bel08, Bel04, Bell08, Maini08, KP12}.  Another famous example to consider in this contest
is the classical Luria--Delbr\"uck mutation problem \cite{LC49, LD42, LD43}. Nonlinear
statistical physics represents a powerful tool to describe these different biological
phenomena. In particular,  methods borrowed from kinetic theory of rarefied
gases have been successfully employed to construct master equations of Boltzmann type,
usually referred to as kinetic equations, describing the time-evolution of the number
density of the population and, eventually, the emergence of universal behaviors
through their equilibria \cite{NPT,PT13}.

The building block of kinetic theory is represented by binary interactions, which,
similarly to binary interactions between particles velocities in the classical kinetic
theory of rarefied gases, describe the variation law  of some selected agent
characteristic, like its number. Then, the microscopic law of variation of the number
density consequent to the (fixed-in-time) way of interaction, is able to capture both
the time evolution and the steady profile, in presence of some conservation law
\cite{NPT,PT13}.

In this paper, we are interested in studying processes in which a huge population of
$N$ interacting agents can be characterized in terms of some scalar quantity assuming
integer non-negative values, say $(V_1,\dots,V_N)$.  The binary interactions between
agents are described by the following rule. When two agents $i$ and $j$ interact,
their pre-interaction values $(V_i,V_j)$ change to
\begin{equation}\label{coll0}
 V_i'= \sum_{k=1}^{V_i}X_{ik} + \sum_{k=1}^{V_j} Y_{ik}, \qquad
 V_j'= \sum_{k=1}^{V_j}X_{jk}  + \sum_{k=1}^{V_i} Y_{jk}.
\end{equation}
In \fer{coll0}, the quantities  $X_{ik},Y_{ik},X_{jk},Y_{jk}$  are independent random
variables which assume non-negative integer values,
the $X$'s with density function $p_X$ and
the $Y$'s with density function $p_Y$.

In \cite{Grillietal}, (Case $1$), a collisional process of type \eqref{coll0}, with
$X_{jk} \in \{0,1,2\}$ and $Y_{jk}\in \{0,1\}$, has been introduced to describe the
evolution of the gene-family abundance (the number of genes of a given family found in
a genome) trough a minimal dynamics of {\it duplication, loss} and {\it interspecies horizontal
gene transfer} (HGT) According to this model, a fixed number $N$ of species-genomes
interact pairwise randomly, so that a given family can gain genes by interactions
associated with HGT events. When two species interact they can exchange genes by HGT
by drawing them from each other with Bernoulli trials of probability $p_h$. In the
same time, they draw from their own genome genes to be lost (with probability $p_l$)
and duplicated (with probability $p_d$). In summary, in this case $P\{X_{ik}=0\}=p_l$,
$P\{X_{ik}=2\}=p_d$, $P\{X_{ik}=1\}=1-p_l-p_d$, $P\{Y_{ik}=1\}=1-P\{Y_{ik}=0\}=p_h$.
Note that, by taking $p_d+p_h=p_l$, the mean number of genes in a given family
is conserved in each collision.

More generally, binary interactions  \eqref{coll0} can be seen as a process in which the $N$
agents can vary their given quantity   $V_i$ of some objects (the genes in Case 1)
according to a collisional processes with {\it duplication, loss and copy}.
An alternative interpretation of the process relies in considering populations in
place of individuals,  and individuals in place of objects. In this case, populations
evolve following a classical branching process, and in addition interact with the
other branching populations and exchange ``migrants''. Note that, within this
interpretation, the $k$-th individual of the population $i$, interacting with
population $j$,  has $X_{ik}$ children  that remain in the population $i$ (do not
migrate) and $Y_{jk}$ that migrate to population $j$.

An example of this second interpretation (Case $2$) is a process in which
$N$ populations of cells interact and are subject to a mutation process.
Here $V_j$ represent the number of mutant cells in the $j$ population.
Each mutating cell can produce a clone in the same population
with probability $p$ and, independently, it can produce a clone which ``migrates'' to the other population
with probability $q$. In other words  $P\{X_{ik}=2\}=p$, and
 $P\{X_{ik}=1\}=1-p$ and $P\{Y_{ik}=1\}=1-P\{Y_{ik}=0\}=q$.
Here typically one consider situation in which the mean number of mutants
grown in time.
As we shall see this process of growth is similar to the linear process for cells
mutation proposed by Lea and Coulson  \cite{LC49}, which has its origin in a series of
pioneering experiments proposed by Luria and Delbr\"uck \cite{LD43}.

Our aim here is to make use of classical methods of kinetic theory to provide a
kinetic description of the evolution in time of a multi-agent system obeying to binary
interactions of type \fer{coll0}. Making use of this collisional mechanism between
individuals, we introduce a bilinear Boltzmann-type equation which describes the
behavior of the population in terms of its density $f(v,t)$, where $v$ represents the
number of objects.  Next, in the asymptotic procedure usually referred to as
\emph{grazing collision} limit, we obtain a simpler linear equation in divergence
form, which retains many properties of the underlying Boltzmann equation, and in
addition can be studied in detail.

Having in mind as prototypes both the horizontal gene transfer model in
\cite{Grillietal} (Case $1$), and the variant of  Luria--Delbr\"uck mutation model in
\cite{KP12, Tos13} (Case $2$), we will split our analysis in two sub-cases, identified
by the evolution of the mean density. Thus, we will limit our study to the cases in
which the mean density of the population remains constant, or grows in time. In both
cases, we can identify in a precise way the large time behavior of the solution. In
particular, in presence of conservation of the mean number of objects (Case $1$), it
will be shown that the solution density converges to a steady state profile, which
depends heavily from the microscopic interactions.

In more details, in Section \ref{sec:kin} we will briefly introduce the kinetic
description of our collisional models, coupling them with some direct physical
consequences. This allows the interested reader to take an exhaustive view of the
modeling assumptions, by comparing both the nonlinear Boltzmann-type equation with its
linear asymptotics, named quasi-invariant limit. Also, the main differences between
Cases $1$ and $2$ are here underlined. Next, we collect in Section \ref{sec:boltz} the
main results on existence and uniqueness of solutions to the mathematical models, by
computing additionally some of the relevant mean quantities. Here, convergence to
equilibrium for collisions of type \fer{coll0}, which imply conservation of the mean,
value is studied in full details. Section \ref{sec:scaled} will deal with the
large-time behaviour of the solution in the case in which collisions of type
\fer{coll0} imply the growth of the mean value. In the case of growth, by suitably
scaling the solution with respect to its time-dependent mean value we will show that
the scaled solution converges towards a fixed steady profile as time tends towards
infinity. Last, Section \ref{Sec:grazing} will describe and justify from a
mathematical point of view the quasi-invariant asymptotics procedure leading from the
Boltzmann-type equation to the linear one. Some technical results are for the sake of
readability postponed to the Appendix.

\section{The kinetic equation and its quasi-invariant collision limit}\label{sec:kin}

\subsection{The kinetic equation}
Performing the usual  mean-field approximation when the number of particles (colliding
entities) goes to infinity, the evolution in time of the number density can be
quantitatively described by a bilinear Boltzmann-type equation. In this equation, the
time variation of the density $f_t(v)=f(v,t)$, with $v \in \N$ and $t > 0$, follows
from  a balance between gain and loss terms, that, for the given number $v$, take into
account all the interactions of type \fer{coll0} which end up with the number $v$
(gain term) as well as all the interactions which, starting from the number $v$, lose
this value after interaction (loss term). This Boltzmann-type equation reads
\cite{PT13}
\begin{equation}\label{boltz}
\frac{\partial}{\partial t} f_t (v)= Q^+(f_t,f_t)(v) -f_t(v).
\end{equation}
In \fer{boltz},  $Q^+$ is the gain collision operator defined, for any pair of densities $f$ and $g$ on $\N$, by
\begin{equation}\label{eq.Q+}
Q^+(f,g)(v)= \text{Prob}\Big (  \sum_{i=1}^{V_1} Y_i + \sum_{i=1}^{V_2} X_i =v\Big ),
\end{equation}
where $V_1,V_2,X_1,X_2,\dots,Y_1,Y_2\dots$ are stochastically independent,
$V_1$ has density $f$, $V_2$ has density $g$, the $X_i$'s have the same law of a
random variable $X$ with  density $p_X$ and the $Y_i$'s have the same law of a random
variable $Y$ with density $p_Y$.

Equation \fer{boltz} is coupled with an initial condition $f_0(v)$, which is here assumed to be a probability density on $\N$.
Considering that the variable $v$ can only assume values in $\N$,
equation \eqref{boltz} can be fruitfully rewritten in terms of  probability generating functions (p.g.f.)
\[
\hat f_t(z):=\sum_{v \geq 0} z^v  f_t(v) \qquad z \in [0,1].
\]
A standard computation shows that $\hat f_t(z)$ satisfies the (simpler) nonlinear equation
\begin{equation}\label{boltz-genfun}
\begin{split}
& \frac{\partial }{\partial t} \hat f_t(z)=\hat f_t(\hat p_X(z)) \hat f_t(\hat p_Y(z)) -\hat f_t(z) \qquad z \in [0,1], \,\, t>0, \\
& \hat f_0(z)=\sum_{v \geq 0} z^{v}  f_0(v) \\
\end{split}
\end{equation}
where
\[
 \hat p_X(z)= \la z^{X}  \ra=\sum_{m \geq 0} z^m  p_X(m) \quad
\text{and} \quad
\hat p_Y(z)=\la z^{Y}  \ra=\sum_{m \geq 1} z^m p_Y(m).
\]
Equation \fer{boltz-genfun} is the analogous of the Fourier transformed Boltzmann equation for maxwell pseudo-molecules, introduced in kinetic theory of rarefied gases by Bobylev \cite{Bob}.

To avoid trivial situations, in what follows we shall assume that both
$P\{X=0\}\not=1$ and $P\{Y=0\}\not=1$.

Existence and uniqueness of solution of equation \eqref{boltz-genfun} can be proven in a standard way (cf. Section \ref{Sec:3} for details).

In this paper, we will only consider random variables $X$ and $Y$ such that
\begin{equation}\label{condizionemediafinita}
 \la X^r +Y^r\ra <+\infty
\end{equation}
for some $r \geq 1$. In presence of condition \eqref{condizionemediafinita} for some
integer $r \geq 1$, and assuming that the initial density $f_0(v)$ has bounded moments
up to order $r$, one can easily reckon explicit expressions for the
evolution of the moments of $f_t(v)$. In particular, the  mean $M_1(f_t)=\sum_{v} v
f_t(v)$ of $f_t$ evolves according to
\begin{equation}\label{evmeanA}
M_1(f_t)=M_1(f_0) e^{\alpha_1 t},
\end{equation}
where $\alpha_1:=\E[(X+Y)]-1$. Indeed, recall that
for any p.g.f.   $\hat \rho$ (of a probability $\rho$ on  $\N$)
\[
M_1(\rho):=\sum_v  v \rho(v)= \partial_z \hat \rho(z)|_{z=1},
\]
where $\partial_z \hat \rho(z)|_{z=1}:=\lim_{z \to 1^-}\partial_z \hat \rho(z) <+\infty$ if and only if $\sum_v  v \rho(v)<+\infty$.
By taking the derivative with respect to $z$ on both sides of \eqref{boltz-genfun}, and evaluating the resulting equation in $z=1$, one obtains
\[
\frac{d}{dt} M_1(f_t)=\a_1 M_1(f_t),
\]
which gives \eqref{evmeanA}. Analogous computations can be done to evaluate higher-order cumulant functions of the density $f_t$.
An explicit expression of the variance of $f_t$ is presented in Proposition \ref{propVar}.

Note that the condition
\begin{equation}\label{condizionemedia}
\la X +Y\ra =1
\end{equation}
implies that the mean remains constant  in time, i.e. $M_1(f_t)=m_0$  for every $t>0$, as in Case 1.

\subsection{The quasi-invariant collision limit}\label{S:Quasiinvariant}

The bilinear Boltzmann-like equation \fer{boltz}, fruitfully written in the form
\fer{boltz-genfun}, is the starting point to obtain, in a well-established asymptotic
procedure, simpler models that are reminiscent of the binary collision rules, and,
while maintaining most of the properties of the nonlinear kinetic model, result to be
linear. This kind of asymptotic procedure is close to the so-called \emph{grazing
collision} limit for the Boltzmann equation \cite{Villani98}, and has been widely used
in kinetic theory to obtain Fokker-Planck type equations to describe,  among others,
the cooling in granular gases \cite{Furioli2012, Pareschi06}, wealth distribution in a
multi-agent society \cite{CPT}, price formation \cite{BCMW2, BCMW1}, and opinion formation  \cite{DMPW, Tos06}.

This asymptotics procedure is based on the following assumptions. Given a small
positive parameter $\eps$, assume that the random variables $X$ and $Y$ are defined by
\begin{equation}\label{variablegrazing}
 X{=}\eta_1 \tilde X + (1-\eta_1) \qquad \text{and} \qquad Y{=}\eta_2 \tilde Y,
\end{equation}
where   $\tilde X,\tilde Y,\eta_1,\eta_2$ are independent random variables, and, for
some constants $b_1,b_2>0$ such that $b_i\eps \le 1$, $i =1,2$, it holds
 \[
 P\{\eta_i=1\}=1-P\{\eta_i=0\}=b_i \eps, \qquad i=1,2. %
 \]
Note that by construction the laws of $X$ and $Y$ depend on $\eps$, so that $X=X_\eps$
and $Y=Y_\eps$. Nevertheless, for the shake of notational simplicity, the dependence
in $\eps$ has been omitted. Since $\eps$ is assumed to be small, the post-collision
quantities $(V'_1, V'_2)$ remain equal to the pre-collision ones $(V_1,V_2)$ with high
probability. Indeed, with high probability, $X_\eps$ is  equal to one and $Y_\eps$ to
zero. In order to evaluate the mean value of $f_t(v)$ in correspondence to the collision defined
by \fer{variablegrazing}, observe that
\[
\E[X_\eps+ Y_\eps]-1=\eps[b_1(\E[\tilde X]-1)+b_2\E[\tilde Y]]=:\eps \bar
\alpha_1.
\]
Hence,
 \eqref{evmeanA} takes the form
 \begin{equation}\label{new-mean}
M_1(f_t)= \partial_z \hat f_t(1)=m_0 e^{\bar\alpha_1 \eps t}.
 \end{equation}
This shows that, in presence of the small parameter $\eps$, to observe the same variation of
the mean value corresponding to $\eps=1$, it is necessary to wait a longer time
$\tau$, given by $\tau = t/\eps$.

By virtue of \fer{variablegrazing} one obtains
\[
\hat p_X(z)=z+ \eps b_1( \hat p_{\tilde X}(z) - z)  \quad \text{and} \quad \hat
p_Y(z)=1+ \eps b_2  (\hat p_{\tilde Y}(z)-1).
\]
Therefore, if  $\hat f_{t}(z)=\hat f_{t,\eps}(z)$ is a solution of \eqref{boltz-genfun} corresponding to random variables  $X$ and $Y$ defined in  \eqref{variablegrazing},
and the initial density $f_0$ has finite mean $m_0$, by expanding
 $\hat f_t(\hat p_X(z))$  and $\hat f_t(\hat p_Y(z))$ in Taylor's series around  $z$ (around $1$, respectively) gives
\[
 \hat f_t(\hat p_X(z))=\hat f_t(z) +\eps b_1 ( \hat p_{\tilde X}(z) - z)  \partial_z \hat f_t(z) +\eps b_1  R_{1,\eps}(t,z),
\]
and
\[
 \hat f_t(\hat p_Y(z))=\hat f_t(1)+\eps b_2 (\hat p_{\tilde Y}(z)-1) \partial_z \hat f_t(1) +\eps b_2 R_{2,\eps}(t,z),
\]
where $R_{1,\eps}$ and $R_{2,\eps}$ denote the remainders. Since by definition $\hat
f_t(1)=1$, recalling also \eqref{new-mean}, the collision term in \eqref{boltz-genfun} takes the form
 \[
\hat f_t (\hat p_X(z) )  \hat f_t(\hat p_Y(z)) -\hat f_t(z) = \eps \left[ b_2 \hat
f_t(z)(\hat p_{\tilde Y}(z)-1) M_1(f_t)  +  b_1 ( \hat p_{\tilde X}(z) - z)
\partial_z \hat f_t(z)\right] + \eps R_{3,\eps}(t,z),
 \]
where $R_{3,\eps}(t,z)$ denotes the new remainder term. In view of the previous remark on the evolution in time of the mean value, let us set
$ \tau = t/\eps$ and  $\hat g_{\tau,\eps}(z)=\hat f_{\tau/\eps}(z)$. Then, $\hat
g_{\tau,\eps}(z)$ satisfies the  equation
\begin{equation}\label{eqrescaled}
 \partial_\tau \hat g_{\tau,\eps}(z) =\hat g_{\tau,\eps}(z) b_2(\hat p_{\tilde Y}(z)-1) M_1(g_{\tau,\eps})  +
b_1(\hat p_{\tilde X}(z) - z)   \partial_z \hat g_{\tau,\eps}(z) +  R_{3,\eps}(\tau/\eps,z).
\end{equation}
Let us assume that $R_{3,\eps} \to 0$ as $\eps \to 0$. Since $M_1(g_{\tau,\eps})=m_0
e^{\bar \alpha_1 \tau}$, letting $\eps \to 0$, and defining  $g_\tau(z)=\lim_{\eps
\to 0}\hat g_{\tau,\eps}(z)$ shows that $g_\tau$ satisfies the equation
\begin{equation}\label{equationgrazing2}
\begin{split}
 & \partial_\tau  \hat g_\tau(z) =m_0 e^{\bar \alpha_1 \tau} b_2(\hat p_{\tilde Y}(z)-1) \hat g_\tau(z) + b_1 ( \hat p_{\tilde X}(z) - z) \partial_z \hat g_\tau(z),
\\
& \hat g_0(z)=\hat f_0(z). \\
\end{split}
\end{equation}
While the actual derivation of equation \fer{equationgrazing2} is largely formal,
every step can be made rigorous. We postpone a precise statement of the limit
procedure and its detailed proof to Section \ref{Sec:grazing} (cf. Proposition
\ref{prop:limite}).

Here it is worth noticing that from the previous equation it follows that  the mean of
$g_\tau(\cdot)$ is $m_0  e^{\bar \alpha_1 \tau}$. Indeed recall that
$y(\tau):=\partial_z \hat g_\tau(z)|_{z=1}=M_1(g_\tau(\cdot))$, so that deriving with
respect to $z$ equation \eqref{equationgrazing2} and evaluating  the resulting
equation for $z=1$
 one gets
\[
 \dot y(\tau)=b_1( \E[\tilde X]-1)y(\tau)+b_2 m_0e^{\bar \alpha_1 \tau} \E[\tilde Y].
\]
 This implies
\[
 y(\tau)=m_0e^{\bar \alpha_1 \tau}+ e^{b_1\E[\tilde Y]\tau}(y(0)-m_0),
\]
and the claim follows since the initial condition is $y(0)=m_0$.

\subsection{The Lea-Coulson equation for mutation processes}

Consider  the special case in which $P\{\tilde X =2\}=1-P\{\tilde X =1\}=p$ and
$P\{\tilde Y=1\}=1-P\{\tilde Y=0\}=q$. With this choice
$p_{\tilde X}(z)-z=p z(z-1)$ and $p_{\tilde Y}(z)-1=q(z-1)$.
By setting $\beta_1=pb_1+qb_2$, $\beta_2=b_1p$ and $\mu=b_2q m_0$,   equation \eqref{equationgrazing2} takes the form
\begin{equation}\label{eq:LeaCoulson0}
 \partial_t  \hat g(t,z) = (z-1)\left\{ \beta_2 z \partial_z \hat g(t,z)+ \mu  e^{\beta_1 t} \hat g(t,z) \right\}.
\end{equation}
Equation \eqref{eq:LeaCoulson0} is referred to in the literature as the Luria--Delbr\"uck model.
The model deals with the estimation of mutation rates, and has its origin in a series of classic
experiments pioneered by Luria and Delbr\"uck \cite{LD43}, which were at the basis of the construction of a mathematical model able to estimate them.
The original model proposed by Luria and Delbr\"uck assumed
deterministic growth of mutant cells, which seemed a too stringent assumption to allow for efficient extraction of reliable information
about mutation rates from experimental data. This shortcoming of the
model of Luria and Delbr\"uck was some year later remedied by a
slightly different mathematical formulation proposed by Lea and
Coulson \cite{LC49}, who adopted the Yule stochastic birth process to
mimic the growth of mutant cells. In the ensuing decades the
Lea--Coulson formulation occupied  a so prominent  place in the study
of mutation rates that the Lea--Coulson formulation is now commonly
referred to as the Luria--Delbr\"uck model \cite{Zheng}.

The classical linear procedure of growth of both normal and mutant cells, that give rise to equation \eqref{eq:LeaCoulson0} has been merged into the framework of kinetic theory in \cite{KP12}. There, the mutation problem was described in terms of linear interactions, and the solution to the underlying linear Boltzmann equation was shown to converge towards the solution to the Lea--Coulson model. A precise statement of this convergence result has been subsequently done in \cite{Tos13}.

While equation \fer{eq:LeaCoulson0} coincides with the mathematical formulation of Lea and Coulson \cite{Zheng}, it is important to remark that the meaning of the various (eventually time dependent) coefficients in \fer{eq:LeaCoulson0} assume a different meaning depending on the original model from which this equation comes from.

In the original formulation, equation \fer{eq:LeaCoulson0} is recovered through the following assumptions. The process starts at time $t=0$ with one normal cell and no
mutants.  Normal cells are assumed to grow deterministically at a
constant rate $\b_1$. Therefore the number of normal cells at time
$t>0$  is $N(t)= e^{\b_1 t}$. Mutants grow deterministically at a constant rate $\b_2$. If a
mutant is generated by a normal cell at time $s> 0$, then the clone
spawned by this mutant will be of size $e^{\b_2(t-s)}$ for any
$t>s$.  Mutations occur randomly at a rate proportional to $N(t)$.
If $\mu$ denotes the per-cell per-unit-time mutation rate, then the
standard assumption is that mutations occur in accordance with a
Poisson process having the intensity function
 \begin{equation}\label{me}
\nu(t) = \mu N(t)= \mu e^{\b_1 t}.
 \end{equation}
Consequently, the expected number of mutations occurring in the time
interval $[0,t)$ is
 \[
\int_0^t \nu(s) \, ds = \frac \mu{\b_1}\left( e^{\b_1 t} -1 \right).
 \]
The formulation of Lea-Coulson differs from the previous Luria-Delbr\"uck original formulation
in that mutant cell
growth is described by a stochastic birth process.
If a mutant is generated by a normal cell at time
$s > 0$, then at any given time $t> s$
the size of the clone spawned by that mutant will have the same distribution
as $Y(t-s)$, where $Y(s)$ is   a Yule
process having birth rate $\beta_2$ and satisfying $Y(0)=1$. In this way if
$W(t)$ denotes the number of mutants existing at time $t$, then
 \begin{equation}\label{LL}
W(t) = \sum_{i =1}^{M(t)} Y_i(t -\tau_i), \quad {\rm if}
\quad M(t) \ge 1,
 \end{equation}
while
 \[
W(t) = 0 \quad {\rm if} \quad M(t) = 0\, .
 \]
Here, $\tau_i$ are the random times at which mutations occur, $M(t)$ stands for the
mutation process which is a Poisson process with intensity function $\nu(t)$ given in
\fer{me}, and $(Y_i(s)_{i \geq 1}$ is a sequence of independent Yule processes. In
particular,   the probability generating function of $W(t)$ satisfies
\fer{eq:LeaCoulson0} \cite{Zheng}.

Unlike the previous approach, the Lea--Coulson equation
\fer{eq:LeaCoulson0} follows here by linearizing a  nonlinear equation of Boltzmann type
based on classical binary interactions, and consequently from a nonlinear mechanism,
in which the number of cells is subject only to duplication (the variable $X$) and
mutation (the migration variable $Y$). In particular, the initial condition is here
different, and the coefficients $\beta_1$ and $\beta_2$ are no more independent. In
addition, the exponential in time term in front of the probability generating function
$\hat g$ is in our derivation generated by the evolution of the mean value.
Nevertheless, from a mathematical point of view, this derivation could share a new
light on the large-time behaviour of the solution to \fer{eq:LeaCoulson0}, which is
now connected with the solution of a nonlinear Boltzmann-type equation, that, unlike the linear model, possesses solutions of self-similar type. This new connection will
be studied in more details in a companion paper.

\subsection{Steady states}

When $\E[X+Y]=1$, while $\la X^r +Y^r \ra<+\infty$ for some $r>1$, one can prove that,
for any given constant $m_0$,  the kinetic model \eqref{boltz} possesses a steady
state, namely a density satisfying
 \begin{equation}\label{f-inf}
f_\infty=Q^+(f_\infty,f_\infty),
 \end{equation}
of mean $m_0$. Moreover, any solution $f_t$ with initial density $f_0$ of
mean value $m_0$ converges (pointwise) to the corresponding stationary solution
$f_\infty $ whenever $\sum_v v^r f_0(v)<+\infty$ (cf. Theorem \ref{thmconvergence2}).
In Section \ref{Sec:3} we shall show that $f_t$ converges to $f_\infty$ exponentially
fast in time with respect to suitable Fourier metrics (cf. Theorem
\ref{thmconvergence}).

If  $\E[X^2+Y^2]<+\infty$, one can recover an explicit formula for the second moment
$M_2(f_\infty)$ of $f_\infty$. Indeed by combining Lemma \ref{inequalitysum} in
Appendix with identity \fer{f-inf} one gets
\[
\begin{split}
 M_2(f_\infty) & =M_2(Q^+(f_\infty,f_\infty))=M_2(f_\infty)[\E[X]^2+\E[Y]^2] \\
& \quad +2\E[X]\E[Y]M_1(f_\infty)^2+[Var(X)+Var(Y)]M_1(f_\infty).\\
\end{split}
\]
Since $M_1(f_\infty)=m_0$, it follows that
\[
Var(f_\infty)=\frac{(Var(X)+Var(Y)) m_0}{1-\la X \ra^2- \la Y \ra^2}.
\]
While in principle one can recursively determine every integer moment of the steady
state $f_\infty$,  an explicit general expression for the p.g.f $\hat f_\infty$ is not
available, with the exception of very simple situations. An explicit example is
obtained by fixing $P\{X=1\}=1-P\{X=0\}=p$ and $P\{Y=1\}=1-P\{Y=0\}=q$ with $p+q=1$.
In this case the steady state coincides with a Poisson distribution. To see this,
recall that the p.g.f. of a Poisson distribution of mean $m_0$ is given by $\hat
f_\infty(z)=\exp(m_0(z-1))$. In the special case described above, $\hat p_X(z)= z p +
1-p$ and $\hat p_{Y}(z)=z(1-p)+p$. Thus
\[
\hat Q^+(\hat f_\infty,\hat f_\infty)(z)= \exp(m_0(\hat p_{Y}(z)-1))
\exp(m_0 (\hat p_{Y}(z)-1)) =
\exp(m_0(z-1))=\hat f_\infty(z).
\]
One of the advantages of the quasi-invariant limit approximation is related to the
possibility to obtain an explicit expression for the steady states. Indeed, if
$b_1(\E[\tilde X]-1)+b_2\E[\tilde Y]=0$, the stationary equation reads
\begin{equation}\label{stat_grazing}
  m_0 b_2 (\hat p_{\tilde Y}(z)-1) \hat g_{\infty}(z) + b_1 ( \hat p_{\tilde X}(z) - z) \partial_z \hat g_{\infty}(z)=0
\qquad  z \in  [0,1]
\end{equation}
and gives  the explicit solution
\begin{equation}\label{stat_grazing_sol}
 \hat g_\infty(z)=\exp \Big \{ -m_0 \frac{b_2}{b_1}\int_{z}^1 \frac{(1-\hat p_{\tilde Y}(s))}{(\hat p_{\tilde X}(s) - s)}ds  \Big \}.
\end{equation}
In Proposition \ref{prop:steady_grazing} we will present an interesting probabilistic
interpretation of \eqref{stat_grazing}. Other aspects related to the steady states
will be included there.

\subsection{Steady states for horizontal gene transfer processes}


Horizontal gene transfer (HGT) is believed to be the dominant component of genome
innovation for bacteria. Recent estimates show that at least 32\% of the genes in
prokaryotes have been horizontally transferred \cite{Koonin2001,Koonin2008}. Today the
study of the HGT can be tackled using a growing amount of genomic data. Nevertheless
the study of mathematical models for HGT is still at the beginning. Among the few
mathematical studies on the subject we mention the infinitely many genes model
\cite{BaumdickerA}. This model uses the coalescent theory in order to describe the
underlying phylogenetic tree. A different approach has been considered in
\cite{Grillietal}, which focuses on the description of a collisional mechanism which
describes the time evolution of the gene-family abundance (the number of genes of a
given family found in a genome). As already mentioned in the introduction, in the
model a fixed (very large) number of species genomes interact pairwise randomly. When
two species interact they can exchange genes by HGT by drawing them from each other
with Bernoulli trials of probability $p_h$. In the same time, they draw from their own
genome genes to be lost (with probability $p_l$) and duplicated (with probability
$p_d$). In the model different families are considered subject to a stochastically
independent evolution. Although this simplifying assumption is probably unrealistic,
the effective rates $(p_d,p_h,p_l)$ can vary from family to family, giving rise to the
observed diversity between families. An interesting result, which will be briefly described below, is that the model predicts
an increased dispersion in family abundance as HGT becomes less relevant.

In \cite{Grillietal} it is assumed that each gene can be transferred in a single copy,
that corresponds to assume that $Y$ can take only the values $\{0,1\}$. In what
follows we present a description of the steady states (in the quasi-invariant
approximation) for a slightly more general situation, i.e.  both $\tilde X$ and
$\tilde Y$ take values $0,1,2$. 
For the sake of simplicity
assume that $b_1=b_2=1$ and write
\[
 P\{\tilde X=k\}=p_k, \quad P\{\tilde Y=k\}=q_k \quad k=0,1,2.
\]
Since we need $\E[\tilde X+\tilde Y]=1$, we impose that
\[
 p_1+q_1+2(p_2+q_2)=1.
\]
With these choices it is immediate to conclude that
\[
  \frac{(1-\hat p_{\tilde Y}(s))}{(\hat p_{\tilde X}(s) - s)}=\frac{q_1+q_2 +s q_2}{p_0-sp_2}.
\]
Recalling that (for $\beta\not=0$)
\[
 \int_z^1 \frac{a+bs}{\alpha+\beta s}ds=\frac{b}{\beta}(1-z)+\frac{a\beta-\alpha b}{\beta^2}\log\Big (\frac{\alpha+\beta}{\alpha+z \beta} \Big),
\]
when $p_0,p_2>0$, \eqref{stat_grazing_sol} becomes
\[
 \hat g_\infty(z)=e^{m_0 \frac{q_2}{p_2}(z-1)} \Big ( \frac{1-p_2/p_0}{1-p_2/p_0 z}  \Big )^{\frac{m_0}{p_2^2}((q_1+q_2)p_2+p_0q_2)}.
\]
Note that $\E[\tilde X]<1$ implies both that $p_0>0$ and $p_2<p_0$, so that $1-p_2/p_0>0$. This proves that
\[
\hat g_\infty(y)=\hat p_{\null\,\,\Pi_0}(z)\hat p_{N}(z)
\]
where $p_{\null\,\,\Pi_0}(z)=e^{m_0 \frac{q_2}{p_2}(z-1)}$ is the p.g.f. of a Poisson random variable $\Pi_0$ of mean $m_0q_2/p_2$ and
$\hat p_{N}(z)$ is the p.g.f. of a Negative Binomial random variable of parameter $p=p_2/p_0$ and
$r=\frac{m_0}{p_2^2}((q_1+q_2)p_2+p_0q_2)$. Recall that a Negative binomial random variable of parameter $(p,r)$ has density
\[
 p_N(k)={ k-r-1 \choose k}(1-p)^r p^k \quad k=0,1,\dots
\]
In other words, in this case, the steady state $g_\infty$ is the density of the random variable
\[
V_\infty=\Pi_0+N,
\]
where $\Pi_0$ and $N$ are independent. Since a negative binomial distribution can
be represented as a compound Poisson distribution, i.e.
\[
 N=\sum_{i=1}^{\Pi_1} L_i,
\]
where $\Pi_1$ is a Poisson distribution of mean $r\log(1/(1-p))$ and the $L_i$'s are independent and identically distributed random variables, each one with logarithmic distribution
\[
p_{L_i}(k)=\frac{p^k}{k\log(1/(1-p))} \quad (k=1,2,\dots),
\]
one gets the representation
\[
 V_\infty=\Pi_0+\sum_{i=1}^{\Pi_1} L_i.
\]
If now we set $q_2=0$, we obtain
\[
 \hat g_\infty(z)=\hat p_{N}(z),
\]
where $N$ is a Negative Binomial random variable of parameter $p=p_2/p_0$ and $r=m_0 q_1/p_2$, i.e. of mean $m_0$ and
variance $m_0 p_0/q_1$. Note that this is the model introduced in \cite{Grillietal} and briefly discussed above.
In this case $p_0=p_l$ is the probability of loss of a gene, $p_2=p_d>0$ is the probability of
duplication and $q_1=p_h$ is the probability of HGT. It is interesting to remark that in this case the resulting steady distribution
depends only on the initial mean $m_0$ and on the ratio $p_h/p_d$. Moreover, 
the dispersion measured by the index $Variance/mean=1+p_d/p_h$, increases as the probability $p_h$ of HGT  decreases. 
The limit case is obtained when $p_d=p_2=0$.

Indeed, if $p_2=q_2=0$, direct computations show that
$\hat g_\infty(y)= \exp\{m_0(z-1)\}$, namely that the steady states are given by Poisson distributions.
In other words Poisson distributions are recovered in the case in which  $p_d=0$,  that is when no duplications occur.

Finally, although not relevant for the HGT model, let us consider the case in which $p_2=0$, while $q_2>0$. Here
\[
 \hat g_\infty(y)=\exp\left\{\frac{m_0}{p_0}\left[\frac{q_2}{2}z^2+(q_1+q_2)z -(q_1+\frac{3}{2}q_2) \right]\right\},
\]
or, alternatively
\[
 \hat g_\infty(y)=\exp\left\{\frac{2m_0}{p_0(2q_1+3q_2)}[\hat p_R(z)-1]\right\},
\]
where
\[
 \hat p_R(z)=q_2^*z^2+q_1^*z, \qquad q_2^*:=q_2/(2q_1+3q_2), \, q_1^*=1-q_2^*.
\]
Since $\hat p_R(z)$ is the p.g.f. of a random variable  $R$ taking values $1,2$ with probability $q_1^*,q_2^*$, it follows that
in this case the steady state  $g_\infty$ is the density of the compound Poisson random variable
\[
 V_\infty=\sum_{i=1}^{\Pi_2} R_i,
\]
where $\Pi_2,R_1,R_2,\dots$ are independent, $\Pi_2$ is distributed according to a Poisson distribution of mean $2m_0/{[p_0(2q_1+3q_2)]}$
and the $R_i$s have the same law of $R$.

\section{The kinetic equation: solutions, moments, steady states}\label{sec:boltz}

The remaining of the paper will be devoted to detailed proofs of various results about
existence, uniqueness and asymptotic behavior of both the kinetic Boltzmann-type
equation \fer{boltz-genfun}, and its quasi-invariant limit \fer{equationgrazing2}.

\subsection{Existence and uniqueness of solutions}\label{Sec:3}
The unique solution of \eqref{boltz-genfun} can be written
 in a semi-explicit form by resorting to the so-called Wild series \cite{Wild, PT13}.

\begin{proposition}\label{propWild}
Let $f_0(v)$ be a probability density. Then, the initial value problem for equation
\fer{boltz-genfun}, with initial condition $f_0$,  has a unique global solution. For
any given $t >0$, the unique solution $\hat f_t$ can be written as
\[
 \hat f_t(z)=e^{-t} \sum_{n \geq 0} (1-e^{-t})^{n}  \hat  q_n(z),
\]
where
\(
 \hat  q_0(z)=\hat f_0(z),
\)
and, for any $n \geq 1$,
\[
 \hat  q_n(z):=\frac{1}{n} \sum_{j=0}^{n-1} \hat q_j(\hat p_X(z))  \hat  q_{n-1-j}(\hat p_Y(z)).
\]
Moreover, $\hat f_t$ remains a p.g.f. for any $t > 0$.
\end{proposition}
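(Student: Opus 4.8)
The plan is to run the standard Wild-sum / Picard-iteration argument adapted to the probability generating function formulation. First I would rewrite \fer{boltz-genfun} in mild (integral) form by multiplying through by the integrating factor $e^{t}$: setting $\hat f_t(z) = e^{-t}\hat f_0(z) + \int_0^t e^{-(t-s)} \hat f_s(\hat p_X(z))\,\hat f_s(\hat p_Y(z))\,ds$, one sees that a continuous (in $t$, uniformly in $z\in[0,1]$) solution of this integral equation is equivalent to a solution of the ODE. Local existence and uniqueness then follow from a contraction-mapping argument in the Banach space $C([0,T]\times[0,1])$ with the sup norm, using that the bilinear map $(f,g)\mapsto f(\hat p_X(\cdot))\,g(\hat p_Y(\cdot))$ is Lipschitz on the unit ball (since $\hat p_X,\hat p_Y$ map $[0,1]$ into $[0,1]$ and $|ab-a'b'|\le|a-a'|+|b-b'|$ for $a,a',b,b'\in[0,1]$); choosing $T$ small makes the factor in front of the nonlinear term strictly less than $1$. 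Global existence is immediate because the a priori bound $\|\hat f_t\|_\infty \le 1$ is propagated (the iteration preserves the unit ball), so the local solution never blows up and can be continued to all $t>0$.

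Next I would identify the solution with the claimed series. The natural route is to plug the ansatz $\hat f_t(z) = e^{-t}\sum_{n\ge 0}(1-e^{-t})^n \hat q_n(z)$ into \fer{boltz-genfun} and check it term by term, or — cleaner — to recognize the series as the Picard iterates reorganized. Concretely, expanding $e^{-t}(1-e^{-t})^n$ and differentiating in $t$, the recursion $\hat q_n = \tfrac1n\sum_{j=0}^{n-1}\hat q_j(\hat p_X(\cdot))\,\hat q_{n-1-j}(\hat p_Y(\cdot))$ is exactly what is forced by matching the coefficient of $e^{-t}(1-e^{-t})^n$ on both sides of the equation, once one notes that $\frac{d}{dt}\big[e^{-t}(1-e^{-t})^n\big] = (n+1)e^{-t}(1-e^{-t})^n - (n+1)e^{-t}(1-e^{-t})^{n+1} - e^{-t}(1-e^{-t})^n \cdot$ (after reindexing, the bookkeeping collapses to the stated recursion). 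One must check that the series converges: since each $\hat q_n$ is a p.g.f. (see below), $0\le \hat q_n(z)\le 1$, so the series is dominated by $e^{-t}\sum_n (1-e^{-t})^n = 1$, giving absolute and uniform convergence on $[0,1]$ for every fixed $t$, and termwise differentiation in $t$ is justified by the same domination applied to the differentiated series. By uniqueness from the first part, this series is \emph{the} solution.

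The remaining assertion — that $\hat f_t$ is a p.g.f. for every $t>0$ — I would get by induction on $n$ that each $\hat q_n$ is a p.g.f.\ of some probability density on $\N$: $\hat q_0 = \hat f_0$ is one by hypothesis, and if $\hat q_0,\dots,\hat q_{n-1}$ are p.g.f.'s then so is each product $\hat q_j(\hat p_X(z))\,\hat q_{n-1-j}(\hat p_Y(z))$ (composition of a p.g.f.\ with a p.g.f.\ is a p.g.f.\ — it is the p.g.f.\ of a random sum — and a product of p.g.f.'s is the p.g.f.\ of an independent sum), hence the convex combination $\hat q_n$ is a p.g.f. Then $\hat f_t(z) = e^{-t}\sum_n(1-e^{-t})^n\hat q_n(z)$ is a convex combination (the weights $e^{-t}(1-e^{-t})^n$ are nonnegative and sum to $1$) of p.g.f.'s, hence a p.g.f. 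I expect the only mildly delicate point to be the bookkeeping in the termwise verification that the series solves the ODE — specifically organizing the $t$-derivative of $e^{-t}(1-e^{-t})^n$ so that the Cauchy-product structure of the nonlinear term lines up with the recursion — but this is a routine reindexing once the dominated-convergence justification for differentiating under the sum is in place; no genuine obstacle arises.
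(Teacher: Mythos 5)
Your proposal is correct and follows essentially the same route as the paper: the paper also verifies the Wild series directly as a solution, proves uniqueness from the Lipschitz bound $|ab-a'b'|\le |a-a'|+|b-b'|$ on the unit ball (via Gronwall on $\sup_z|\hat f_t(z)-\hat g_t(z)|$ rather than your contraction-plus-continuation, which is the same estimate packaged differently), and obtains the p.g.f.\ property by the identical induction on $\hat q_n$. The only blemish is the garbled intermediate formula for $\frac{d}{dt}\bigl[e^{-t}(1-e^{-t})^n\bigr]$, but the Cauchy-product matching you describe does collapse to the stated recursion, so this is harmless.
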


\begin{proof}
The fact that $\hat f_t(z)$ is a solution can be checked directly.
To prove uniqueness, let  $\hat f_t$ and $\hat g_t$ be two solutions departing from
the same initial value.
Note that in place of  \eqref{boltz} one can consider the same equation in integral form
\begin{equation}\label{boltz-integral}
 f_t(v)-f_0(v)=\int_0^t [ Q^+(f_s,f_s)(v) -f_s(v)  ] ds \qquad v \in \N .
\end{equation}
Define  $u(t):= \sup_{z \in [0,1]} |\hat f_t(z)-\hat g_t(z)| \leq 2$. Using \eqref{boltz-integral} one gets
\(
 u(t) \leq u(0)+ 2 \int_0^t u(s) ds\).
Hence, Gronwall inequality implies
 \(
 u(t) \leq u(0) e^{2t}\).
Since $u(0)=0$ it follows that $u(t)=0$. Finally, by induction  one shows immediately
that $\hat q_n$ is a p.g.f. for every $n\geq 0$. Hence $\hat f_t$ is a p.g.f..
\end{proof}

An immediate consequence of the previous result is that the unique solution to
equation \fer{boltz} can be expressed as
\begin{equation}\label{wild1}
  f_t(v)=e^{-t} \sum_{n \geq 0} (1-e^{-t})^{n}   q_n(v),
\end{equation}
where
\(
   q_0(v)= f_0(v)\)
and, for any $n \geq 1$,
\begin{equation}\label{wild2}
  q_n(v):=\frac{1}{n} \sum_{j=0}^{n-1}  Q^+(q_j,q_{n-1-j})(v).
\end{equation}

\subsection{Steady states and convergence to equilibrium}
For any $r>0$, let us consider the following metric between probability distributions
over non-negative integers
\[
 d_{r}(f,g):=\sup_{z \in (0,1)} \frac{|\hat f(z)-\hat g(z)|}{|1-z|^{r}}.
\]
This is a natural adaptation of the well-known Fourier distance introduced in
\cite{Toscanimetrica}. It is clear that convergence with respect to the metric $d_r$
yields convergence of p.g.f.'s and, hence, point-wise convergence of densities.

In dealing with general  probability measures $f$ and $g$ on  $[0,+\infty)$, it is
also useful to introduce the weighted Laplace transform metric
\[
  d^*_{r}(f,g):=\sup_{\xi > 0 } \frac{|\tilde f(\xi)-\tilde g(\xi)|}{|\xi|^{r}}
\]
where
\[
 \tilde f(\xi)=\int e^{-\xi v} f(dv) \qquad \text{and} \qquad \tilde g(\xi)=\int e^{-\xi v} g(dv).
\]
It is easy to see that  $d_r$ and $d^*_r$ are (topologically) equivalent (cf. Lemma
\ref{lemma3}).

We remark that, in terms of the Laplace transform
\[
 \tilde f_t(\xi)=\sum_v e^{-\xi v} f_t(v),
\]
the Boltzmann-type equation \fer{boltz}  takes the form
\begin{equation}\label{boltz-laplace}
 \frac{\partial }{\partial t} \tilde f_t(\xi)=\tilde f_t(-k_X(\xi)) \tilde f_t(-k_Y(\xi)) -\tilde f_t(\xi), \qquad \xi> 0. 
\end{equation}
Now, $\tilde f_0(\xi)=\sum_v e^{-\xi v}  f_0(v)$ is the Laplace transform of the
initial density, and
\[
 k_X(\xi)=\log(\la e^{-\xi X}\ra) \quad \text{and} \quad k_Y(\xi)=\log(\la e^{-\xi Y}\ra)
\]
are the cumulant functions of $X$ and $Y$.

Given a density $f$ on $\N$ (or more generally a probability measure $f$ on $\RE^+$),
$M_r(f)$ will denote the $r$-moment of $f$, expressed by $\sum_v v^r f(v)$  (or more
generally by $\int v^r f(dv)$). To  denote the variance of a random variable $Z$ we
will write $Var(Z)$. Moreover, if  $f$ is a density (or a probability measure)
$Var(f)$ will denote the variance of a random variable $Z$ with law $f$, that is
$Var(f)=M_2(f)-M_1(f)^2$.
Finally, for $r \geq 1$, let us set
\begin{equation}\label{alpha}
\Delta_r:= \la X \ra^r+ \la Y \ra^r
\quad \text{and} \quad
 \alpha_r=\la X \ra^r+ \la Y \ra^r-1.
\end{equation}

\begin{theorem}[\bf{Contraction in $d_r$}]\label{thmconvergence}
Assume that $\la X^r +Y^r \ra <+\infty$ for some $r \in (1,2]$.
Let
 $f_t^{(1)}$ and $f_t^{(2)}$ be two solutions of \eqref{boltz} with initial conditions
 $f_0^{(1)}$ and $f_0^{(2)}$ such that $M_1(f_0^{(1)}) = M_1(f_0^{(2)})=m_0$.
Then  for every $t>0$
\begin{equation}\label{ineqdr}
 d_{r}(f_t^{(1)},f_t^{(2)})   \leq d_{r}(f_0^{(1)},f_0^{(2)}) e^{\alpha_r t}
\quad \text{and} \quad
d_{r}^*(f_t^{(1)},f_t^{(2)})   \leq d_{r}^*(f_0^{(1)},f_0^{(2)}) e^{\alpha_r t}.
\end{equation}
\end{theorem}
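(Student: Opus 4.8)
The strategy is to work directly with the probability generating functions and exploit the bilinear structure of the collision operator $Q^+$. Writing $\hat h_t(z) := \hat f_t^{(1)}(z) - \hat f_t^{(2)}(z)$, the key observation is that $\hat Q^+(f,g)(z) = \hat f(\hat p_X(z))\hat g(\hat p_Y(z))$ is bilinear, so that
\[
\hat Q^+(f_t^{(1)},f_t^{(1)})(z) - \hat Q^+(f_t^{(2)},f_t^{(2)})(z)
= \hat h_t(\hat p_X(z))\,\hat f_t^{(1)}(\hat p_Y(z)) + \hat f_t^{(2)}(\hat p_X(z))\,\hat h_t(\hat p_Y(z)).
\]
Plugging this into \eqref{boltz-genfun} gives an equation for $\hat h_t$, and dividing by $|1-z|^r$ we want to estimate $d_r(f_t^{(1)},f_t^{(2)}) = \sup_z |\hat h_t(z)|/|1-z|^r$.

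The first step is to control each of the two terms above by $d_r(f_t^{(1)},f_t^{(2)})$ times an appropriate factor. For the first term, $|\hat h_t(\hat p_X(z))| \le d_r(f_t^{(1)},f_t^{(2)})\,|1-\hat p_X(z)|^r$ and $|\hat f_t^{(1)}(\hat p_Y(z))| \le 1$ since it is a p.g.f.\ evaluated on $[0,1]$; similarly for the second term. Hence
\[
\frac{|\hat Q^+(f_t^{(1)},f_t^{(1)})(z) - \hat Q^+(f_t^{(2)},f_t^{(2)})(z)|}{|1-z|^r}
\le d_r(f_t^{(1)},f_t^{(2)})\left(\frac{|1-\hat p_X(z)|^r}{|1-z|^r} + \frac{|1-\hat p_Y(z)|^r}{|1-z|^r}\right).
\]
The crucial analytic estimate is then $|1-\hat p_X(z)| \le \E[X]\,|1-z|$ and $|1-\hat p_Y(z)| \le \E[Y]\,|1-z|$ for $z\in(0,1)$, which follows from $1-\hat p_X(z) = \sum_m p_X(m)(1-z^m)$ together with the elementary bound $1-z^m = (1-z)(1+z+\cdots+z^{m-1}) \le m(1-z)$. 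This yields the bound $\Delta_r = \E[X]^r + \E[Y]^r$ on the parenthesised quantity, so the collision term contributes $\Delta_r\, d_r(f_t^{(1)},f_t^{(2)})$ while the $-\hat h_t(z)$ loss term contributes $-d_r$. Writing $\phi(t) := d_r(f_t^{(1)},f_t^{(2)})$, one obtains the differential inequality $\phi'(t) \le (\Delta_r - 1)\phi(t) = \alpha_r\phi(t)$ (made rigorous via the integral form \eqref{boltz-integral} and Gronwall, exactly as in the proof of Proposition \ref{propWild}), and Gronwall's lemma gives the first inequality in \eqref{ineqdr}.

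The main obstacle — and the reason the hypothesis $M_1(f_0^{(1)}) = M_1(f_0^{(2)})$ and $r\le 2$ appear — is ensuring that $\phi(t) = d_r(f_t^{(1)},f_t^{(2)})$ is actually \emph{finite}, so that the Gronwall argument is not vacuous. For $r\in(1,2]$ and p.g.f.'s with equal means, $|\hat h_t(z)|/|1-z|^r$ is bounded near $z=1$ precisely because the first-order Taylor terms of $\hat f_t^{(1)}$ and $\hat f_t^{(2)}$ at $z=1$ cancel (both equal $1$ with derivative $m_0 e^{\alpha_1 t}$ by \eqref{evmeanA}), leaving a remainder of order $|1-z|^r$ controlled by the common bounded $r$-th moment — this is where one invokes that the moments of order up to $r$ stay finite, which follows from \eqref{condizionemediafinita} and the moment evolution discussed after \eqref{condizionemediafinita}; near $z=0$ there is no issue. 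Once finiteness is in hand for all $t$ (propagated using the Wild representation \eqref{wild1}), the estimate above closes. The Laplace-transform version $d_r^*$ is proved by the identical argument applied to \eqref{boltz-laplace}: the map $\xi \mapsto -k_X(\xi)$ satisfies $|{-k_X(\xi)}| = |\log\E[e^{-\xi X}]| \le \E[X]\,\xi$ by convexity/Jensen (since $\E[e^{-\xi X}] \ge e^{-\xi\E[X]}$), $|\tilde f(\xi)| \le 1$ for $\xi>0$, and the same cancellation of first-order terms at $\xi=0$ gives finiteness of $d_r^*$; the rest is verbatim.
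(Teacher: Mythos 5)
Your argument is correct in its essentials and rests on the same two pillars as the paper's proof: the bound $|1-\hat p_X(z)|\le \E[X]\,|1-z|$ (you get it from $1-z^m\le m(1-z)$, the paper from the tail-sum identity \eqref{barP} — these are the same estimate), and the sub-multiplicativity of the collision term in the $d_r$-metric obtained by splitting the bilinear difference and using $|\hat f|\le 1$. Where you genuinely diverge is in how the estimate is integrated in time: the paper pushes the inequality through the Wild series term by term, obtaining $d_r(q_n^{(1)},q_n^{(2)})\le \frac{\Delta_r}{n}\sum_{j<n}d_r(q_j^{(1)},q_j^{(2)})$, closing it with Lemma \ref{LemmaGamma} and summing exactly via \eqref{serieHypergeo}; you instead write the continuous-time equation for $\hat h_t=\hat f_t^{(1)}-\hat f_t^{(2)}$ and apply Gronwall. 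Your route is arguably more transparent, but it buys the extra obligation you correctly identify: Gronwall needs $\phi(s)=d_r(f_s^{(1)},f_s^{(2)})$ to be finite and locally bounded \emph{a priori} on $[0,t]$, which is not automatic. The paper's discrete recursion sidesteps this entirely — finiteness of $d_r(q_n^{(1)},q_n^{(2)})$ propagates from $d_r(f_0^{(1)},f_0^{(2)})<+\infty$ (the only case that matters) with no moment hypothesis on the initial data, and the Gamma-function summation then \emph{is} the proof, with no Gronwall step left to perform. By contrast, your proposed fix via Lemma \ref{lemma3} and Proposition \ref{propmomentir} requires $M_r(f_0^{(i)})<+\infty$, which the theorem does not assume (one can have $d_r(f_0^{(1)},f_0^{(2)})<+\infty$ with both $r$-th moments infinite if the tails match); and your alternative fix, "propagate finiteness using the Wild representation," amounts to running the paper's recursion anyway, after which the Gronwall step is redundant. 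So either state the additional moment hypothesis, or commit fully to the Wild-series recursion. The $d_r^*$ half is fine: your Jensen argument $\E[e^{-\xi X}]\ge e^{-\xi\E[X]}$ gives exactly the paper's \eqref{ineqKX}, and the rest carries over verbatim.
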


\begin{proof}  Assume that $d_{r}(f_0^{(1)},f_0^{(2)})<+\infty$, otherwise there is nothing to be proved.
Recall that $\hat p_X(z)= \la z^{X}  \ra=\sum_{m \geq 0} z^m p_X(m)$ and $\hat p_Y(z)= \la z^{Y}  \ra=\sum_{m \geq 0} z^m p_Y(m)$. It is easy to check (cf. Thm. X1.1.1 in \cite{Feller}) that
\begin{equation}\label{barP}
1-\hat p_X(z)= (1-z) \sum_{m \geq 0}  z^m \bar P_X(m),
\end{equation}
where $\bar P_X(m)=P(X > m) =1-\sum_{j=0}^m p_j$. The same result
holds for $\hat p_Y$.
This yields that, for every $z \in (0,1)$, one can write
\[
\frac{ 1-\hat p_X(z)}{1-z}= \sum_m  z^m \bar P_X(m) \leq \sum_m\bar P_X(m) =\la X \ra,
\]
and  $|1-\hat p_Y(z)|/(1-z) \leq \la Y \ra$.
Hence, for every $r >1$,
\[
  \sup_{z \in (0,1)} \frac{|1-\hat p_X(z)|^{r}}{|1-z|^{r}} + \sup_{z \in (0,1)} \frac{|1-\hat p_Y(z)|^{r}}{|1-z|^{r}}
=\la X \ra^{r}+\la Y \ra^{r}=\Delta_r.
\]

Now recall that, by Proposition \ref{propWild},
\[
 \hat f_t^{(i)}(z)=e^{-t} \sum_{n \geq 0} (1-e^{-t})^{n}  \hat  q_n^{(i)}(z),
\]
where
\(
 \hat  q_0^{(i)}(z)=\hat f_0^{(i)}(z)
\)
and, for any $n \geq 1$,
\[
 \hat  q_n^{(i)}(z):=\frac{1}{n} \sum_{j=0}^{n-1} \hat q_j^{(i)}(\hat p_X(z))  \hat  q_{n-1-j}^{(i)}(\hat p_Y(z)).
\]
Using the bound  $|\hat q_j^{(i)}(z)|\leq 1$ for every $z \in (0,1)$ and
$i=1,2$, we obtain
\begin{equation}\label{ineqfourier}
\begin{split}
 d_{r} (q_n^{(1)},q_n^{(2)})  & \leq
  \frac{1}{n} \sum_{j=0}^{n-1}\sup_{z \in (0,1)}
  \frac{|
  \hat q_j^{(1)}(\hat p_X(z))  \hat  q_{n-1-j}^{(1)}(\hat p_Y(z))
- \hat q_j^{(2)}(\hat p_X(z))  \hat  q_{n-1-j}^{(2)}(\hat p_Y(z))|}{|1-z|^{r}}
  \\
& \leq  \frac{1}{n} \sum_{j=0}^{n-1}\sup_{z \in (0,1)}  \frac{|q_{j}^{(1)}(\hat p_X(z)) - q_{j}^{(2)}(\hat p_X(z))}{|1-\hat p_X(z)|^{r}}  \frac{|1-\hat p_X(z)|^{r}}{|1-z|^{r}}\\
&+ \frac{1}{n} \sum_{j=0}^{n-1}
 \sup_{z \in (0,1)}  \frac{|q_{n-1-j}^{(1)}(\hat p_Y(z)) - q_{n-1-j}^{(2)}(\hat p_Y(z))|}{|1-\hat p_Y(z)|^{r}}  \frac{|1-\hat p_Y(z)|^{r}}{|1-z|^{r}}
 \\
&  \leq \frac{\Delta_r}{n} \sum_{j=0}^{n-1} d_{r} (q_j^{(1)},q_j^{(2)}). \\
\end{split}
\end{equation}
Hence, if $d_r(f_0^{(1)},f_0^{(2)})=d_r(q_0^{(1)},q_0^{(2)})<+\infty$ then
$d_{r} (q_n^{(1)},q_n^{(2)}) <+\infty$ for every $n$ and by Lemma \ref{LemmaGamma}
\[
d_{r} (q_n^{(1)},q_n^{(2)}) \leq d_r(f_0^{(1)},f_0^{(1)}) \frac{\Gamma(\Delta_r+n)}{\Gamma(\Delta_r)\Gamma(n+1)}.
\]
Hence, using also \eqref{serieHypergeo},
\[
\begin{split}
d_r(f_t^{(1)},f_t^{(1)}) & \leq \sum_{n \geq 0} e^{-t}(1-e^{-t})^n d_r(q_n^{(1)},q_n^{(2)}) \\
&  \leq d_r(f_0^{(1)},f_0^{(1)}) \sum_{n \geq 0} e^{-t}(1-e^{-t})^n \frac{\Gamma(\Delta_r+n)}{\Gamma(\Delta_r)\Gamma(n+1)}=d_r(f_0^{(1)},f_0^{(1)})  e^{\Delta_r-1}.\\
\end{split}
\]
To recover the second inequality in \eqref{ineqdr}, recall that by \eqref{ineqKX},  for every $r >1$,
\[
 \Delta_r^*:= \sup_{\xi>0} \frac{|k_X(\xi)|^{r}}{|\xi|^{r}}
 + \sup_{\xi > 0} \frac{|k_Y(\xi)|^{r}}{|\xi|^{r}}
=\la X \ra^{r}+\la Y \ra^{r}.
\]
Using this identity, we can repeat the previous part of  proof to conclude.
\end{proof}

\begin{theorem}[\bf{Convergence to steady states}]\label{thmconvergence2}
 Let $\E[X+Y]=1$, and $\la X^r +Y^r \ra<+\infty$ for some $r \in (1,2]$. Then
for every $m_0>0$,  there exists a  unique density $f_\infty$, with mean $m_0$ and finite moment of order $r$ satisfying $f_\infty=Q^+(f_\infty,f_\infty)$.
Moreover, provided  $M_r(f_0)<+\infty$, any solution $f_t$ departing from the initial density $f_0$ with mean $m_0$,  converges in $d_r$-metric
to the corresponding stationary solution $f_\infty $, and
 \[
  d_{r}(f_t,f_\infty)   \leq d_{r}(f_0,f_\infty) e^{-|\alpha_r| t}.
 \]

\end{theorem}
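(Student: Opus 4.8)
The plan is to treat convergence to equilibrium as a fixed-point problem for the collision operator $Q^+$, exploiting the contraction estimate already established in Theorem~\ref{thmconvergence}. The key observation is that when $\E[X+Y]=1$ we have $\Delta_r = \la X\ra^r + \la Y\ra^r < (\la X\ra + \la Y\ra)^r = 1$ by strict subadditivity of $t\mapsto t^r$ for $r>1$ (using $P\{X=0\}\neq 1$, $P\{Y=0\}\neq 1$ so both means are positive), hence $\alpha_r = \Delta_r - 1 < 0$ and $e^{\alpha_r t} = e^{-|\alpha_r|t}$ is a genuine contraction factor.

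First I would address \textbf{existence} of $f_\infty$. The natural approach is to build $f_\infty$ as a limit: fix $f_0$ a probability density on $\N$ with mean $m_0$ and finite $r$-moment, let $f_t$ be the corresponding Wild solution from Proposition~\ref{propWild}, and show $(f_t)_{t\geq 0}$ is Cauchy in $d_r$ as $t\to\infty$. For $s<t$, one writes $d_r(f_t,f_s) = d_r(f_{t-s}(\text{run from }f_s),\, f_s(\text{run from }f_s))$ — here I would use that the flow is autonomous — and then apply Theorem~\ref{thmconvergence} with the two initial data $f_s$ and $f_0$ having the same mean $m_0$, getting $d_r(f_t,f_s)\le d_r(f_{s},f_0)e^{\alpha_r(t-s)}$; combined with a bound showing $d_r(f_s,f_0)$ stays bounded (which follows because all these densities have mean $m_0$ and uniformly bounded $r$-moment — here one needs the moment-propagation computation, the $r$-moment of $f_t$ stays bounded since $\alpha_r<0$ when $\E[X+Y]=1$), this gives a Cauchy sequence. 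Its limit $f_\infty$ has mean $m_0$ and finite $r$-moment (lower semicontinuity / uniform moment bounds), and passing to the limit in $f_{t+h} = $ (flow of $f_t$ for time $h$) together with continuity of $Q^+$ in $d_r$ shows $f_\infty$ is stationary, i.e. $f_\infty = Q^+(f_\infty,f_\infty)$.

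Next, \textbf{uniqueness} and the \textbf{convergence rate} come together. If $f_\infty$ and $g_\infty$ are two stationary densities with mean $m_0$ and finite $r$-moment, apply Theorem~\ref{thmconvergence} to the constant solutions $f_t^{(1)}\equiv f_\infty$, $f_t^{(2)}\equiv g_\infty$: then $d_r(f_\infty,g_\infty)\le d_r(f_\infty,g_\infty)e^{\alpha_r t}$ for all $t$, and since $d_r(f_\infty,g_\infty)<\infty$ (both have finite $r$-moment, so the ratio is bounded near $z=1$) and $e^{\alpha_r t}\to 0$, we get $d_r(f_\infty,g_\infty)=0$, hence $f_\infty=g_\infty$. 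For the convergence estimate, take any solution $f_t$ from $f_0$ of mean $m_0$ with $M_r(f_0)<\infty$ and compare with the constant solution equal to $f_\infty$: Theorem~\ref{thmconvergence} directly gives $d_r(f_t,f_\infty)\le d_r(f_0,f_\infty)e^{\alpha_r t}=d_r(f_0,f_\infty)e^{-|\alpha_r|t}$, which is exactly the claimed bound; note $d_r(f_0,f_\infty)<\infty$ because both $f_0$ and $f_\infty$ have finite $r$-moment.

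The main obstacle is the existence part — specifically, verifying that $d_r(f_s,f_0)$ is bounded uniformly in $s$, which requires the a priori bound on $M_r(f_t)$ and an argument that finite $r$-moment implies finite $d_r$-distance to $f_0$ (a standard fact: $|\hat f(z)-\hat g(z)|\le C|1-z|^{\min(r,1)}$ type estimates, or more precisely for $r\in(1,2]$ one controls $|\hat f(z)-\hat g(z)|/|1-z|^r$ by the $r$-moments plus matching means). Once the uniform bound is in hand, the Cauchy argument and the identification of the limit as a stationary state are routine; uniqueness and the rate are then immediate corollaries of Theorem~\ref{thmconvergence} as above.
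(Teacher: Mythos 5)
Your proposal is correct and rests on the same two pillars as the paper's proof: the strict inequality $\Delta_r=\E[X]^r+\E[Y]^r<1$ (so $\alpha_r<0$), and the contraction estimate of Theorem \ref{thmconvergence} combined with the fact that equal means plus finite $r$-moments force $d_r<\infty$ (Lemma \ref{lemma3} in the paper). The uniqueness argument and the exponential rate are obtained exactly as in the paper, by comparing with the constant-in-time solution $f_t^{(2)}\equiv f_\infty$.

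The one genuine difference is in the existence step. The paper constructs $f_\infty$ through the \emph{discrete} iteration $f_{n+1}=Q^+(f_n,f_n)$ started from $\delta_{m_0}$: the one-step bound $d_r(f_{n+k},f_n)\le \Delta_r^n\, d_r(f_k,f_0)$ and the recursion $M_r(f_{n+1})\le A+\Delta_r M_r(f_n)$ give the Cauchy property and the uniform moment bound with elementary geometric-series estimates, and tightness (Prohorov) identifies the limit as a probability density satisfying $\hat f_\infty=\hat f_\infty(\hat p_X)\hat f_\infty(\hat p_Y)$. You instead run the \emph{continuous} flow $t\mapsto f_t$ and use the semigroup property; this works, but it requires the full strength of Theorem \ref{thmconvergence} (hence the Wild series) plus Proposition \ref{propmomentir} for the uniform bound on $M_r(f_t)$, i.e.\ more machinery for the same conclusion. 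Two small points to fix in a write-up: your Cauchy estimate should read $d_r(f_t,f_s)=d_r(S_s f_{t-s},S_s f_0)\le e^{\alpha_r s}\,d_r(f_{t-s},f_0)$ (the version $d_r(f_t,f_s)\le d_r(f_s,f_0)e^{\alpha_r(t-s)}$ as written compares the wrong pair of initial data); and in the uniqueness step the finiteness of $d_r(f_\infty,g_\infty)$ uses not only the finite $r$-moments but also that both stationary states have the \emph{same} mean $m_0$, since otherwise $|\hat f_\infty(z)-\hat g_\infty(z)|\sim c|1-z|$ near $z=1$ and the ratio by $|1-z|^r$ diverges for $r>1$. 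You do state the matching-means requirement elsewhere, so this is a matter of placement rather than a gap.
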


\begin{proof}
Let us observe that $\Delta_r<1$ for every $r>1$ whenever $\la X +Y\ra =1$.
Let $f_0=\delta_{m_0}$ and set $f_{n+1}=Q^+(f_n,f_n)$ for every $n \geq 0$. Since $\la X +Y\ra =1$, one gets $M_1(f_n)=m_0$ for every $n$.
Moreover, by Lemma \ref{inequalitysum} in the Appendix and the fact that $M_1(f_{n})=m_0$ we have
\[
M_r(f_{n+1}) \leq A +\Delta_r M_r(f_{n})
\]
where $A=m_0(\la X^r\ra +\la Y^r \ra)  +m_0^r (2\la X\ra \la Y \ra )^{r/2}$. Iterating on $n$ yields
\begin{equation}\label{ineqMr}
M_r(f_{n+1}) \leq A \sum_{i=0}^n \Delta_r^i +m_0^r \Delta^{n+1}
\leq   \frac{A}{1-\Delta_r} +m_0^r.
\end{equation}
Arguing as in \eqref{ineqfourier} one gets
\[
d_r(f_{n+k},f_{n}) \leq d_r(f_{n+k-1},f_{n-1}) \Delta_r,
\]
and hence
\[
 d_r(f_{n+k},f_{n}) \leq d_r(f_k,f_0) \Delta_r^n.
\]
By Lemma \ref{lemma3} and \eqref{ineqMr}
\[
\sup_k d_r(f_k,f_0) \leq \sup_k \max\{ 2^{r+1}, B^rc_r(M_r(f_k)+M_r(f_0)) \} \leq C.
\]
Hence, the sequence $\{f_n\}_{n\ge0}$ satisfies the Cauchy condition $d_r(f_{n},f_{m}) \leq  \eps$ for every $n,m \geq N(\eps)$.
Since $M_1(f_n)$ is bounded, by Prohorov theorem one obtains that $f_n$ is weakly compact (tight). Combining tightness with the Cauchy condition
one concludes that there is a probability distribution $f_\infty$  such that $\hat f_n \to \hat f_\infty$ (pointwise).
Since
\[
 \hat f_{n+1}(z)= \hat f_n(\hat p_X(z)) \hat f_n(\hat p_Y(z)),
\]
taking $n \to +\infty$, one obtains $\hat f_\infty(z)=\hat f_\infty(\hat p_X(z)) \hat f_\infty(\hat p_Y(z)) $. The property $M_r(f_\infty)<+\infty$ follows from the fact that $\hat f_n \to \hat f_\infty$ and $\sup_n M_r(f_n)<+\infty$
(cf. Thm. 25.11 in \cite{Billingsley}).
Uniqueness of $f_\infty$ (in the class of distribution with finite $r$-moment) follows by Lemma \ref{lemma3} and
Theorem \ref{thmconvergence}.
\end{proof}

\subsection{Evolution of moments} We conclude this Section by giving explicit expressions for the evolution of
the mean and the variance of the solution $f_t$.

\begin{proposition}[\bf{Evolution of the mean}]\label{propmomenti}
Let $f_t$ be the unique solution of equation \eqref{boltz-genfun}
with initial condition $f_0$.
If  $m_0:=M_1(f_0)<+\infty$, then at any subsequent time $t>0$
\begin{equation}\label{consmean-general}
M_1(f_t)=m_0 e^{\alpha_1 t}.
\end{equation}

\end{proposition}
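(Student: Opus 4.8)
The plan is to base the proof on the Wild series representation \eqref{wild1}--\eqref{wild2} furnished by Proposition \ref{propWild}, rather than on a direct manipulation of the nonlinear equation \eqref{boltz-genfun}. Differentiating \eqref{boltz-genfun} in $z$ and letting $z\to 1^-$ is the quick heuristic already sketched above, but making it rigorous requires knowing a priori that $M_1(f_t)<+\infty$ for each $t$ and that $\partial_z$ may be carried through the bilinear collision term and through the time series; all of this comes out for free once the Wild expansion is analysed. Accordingly, write $a_n:=M_1(q_n)=\sum_v v\,q_n(v)$, and begin by recording the first-moment identity $M_1(Q^+(f,g))=\E[X]\,M_1(g)+\E[Y]\,M_1(f)$, valid for any pair of densities $f,g$ with finite mean. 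This is the first-order instance of Lemma \ref{inequalitysum} (equivalently, Wald's identity applied to the representation \eqref{eq.Q+} of $Q^+$).

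Second, I would feed this identity into the recursion \eqref{wild2}. Since $q_n=\tfrac1n\sum_{j=0}^{n-1}Q^+(q_j,q_{n-1-j})$ and the sums $\sum_{j=0}^{n-1}M_1(q_j)$ and $\sum_{j=0}^{n-1}M_1(q_{n-1-j})$ coincide, one obtains, setting $S:=\E[X]+\E[Y]=\alpha_1+1$, the clean recursion $a_0=m_0$ and $n\,a_n=S\sum_{j=0}^{n-1}a_j$ for $n\ge 1$; in particular each $a_n$ is finite by induction on $n$. A short induction (using $\sum_{j=0}^{n-1}\tfrac{\Gamma(j+S)}{\Gamma(S)\Gamma(j+1)}=\tfrac{\Gamma(n+S)}{\Gamma(S+1)\Gamma(n)}$, the hockey-stick identity for generalized binomial coefficients, itself provable by Pascal's rule) then yields the closed form
\[
a_n=m_0\,\frac{\Gamma(n+S)}{\Gamma(S)\,\Gamma(n+1)},\qquad n\ge 0 .
\]

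Third and last, I would sum the Wild series. By Tonelli (every term being nonnegative) $M_1(f_t)=\sum_v v\,f_t(v)=e^{-t}\sum_{n\ge 0}(1-e^{-t})^n a_n$, and substituting the closed form gives
\[
M_1(f_t)=m_0\,e^{-t}\sum_{n\ge 0}(1-e^{-t})^n\frac{\Gamma(n+S)}{\Gamma(S)\,\Gamma(n+1)}=m_0\,e^{-t}\bigl(1-(1-e^{-t})\bigr)^{-S}=m_0\,e^{-t}e^{St}=m_0\,e^{\alpha_1 t},
\]
where the middle equality is the generalized binomial series $\sum_{n\ge 0}\tfrac{\Gamma(n+S)}{\Gamma(S)\Gamma(n+1)}x^n=(1-x)^{-S}$ evaluated at $x=1-e^{-t}\in[0,1)$ --- precisely identity \eqref{serieHypergeo} already used in the proof of Theorem \ref{thmconvergence} --- and the last step uses $\alpha_1=\E[X+Y]-1=S-1$.

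The only point requiring any care --- the ``main obstacle,'' such as it is --- is the bookkeeping around degenerate cases: one must note that $S=\E[X+Y]$ is finite (guaranteed by \eqref{condizionemediafinita}, so that $\alpha_1$ is well defined) and that $S>0$ (so that $\Gamma(S)$ in the denominators causes no trouble), the latter because the standing hypotheses $P\{X=0\}\ne 1$ and $P\{Y=0\}\ne 1$ force $\E[X]>0$. Everything else is routine, the interchanges of summation being all covered by nonnegativity.
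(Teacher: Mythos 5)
Your proof is correct and follows essentially the same route as the paper: reduce to the Wild series, derive the recursion $M_1(q_n)=\frac{\Delta_1}{n}\sum_{j=0}^{n-1}M_1(q_j)$ from the first-moment identity for $Q^+$, solve it in closed Gamma form, and resum via \eqref{serieHypergeo}. The only (harmless) difference is that you verify the closed form by a direct induction with the hockey-stick identity, where the paper invokes Lemma \ref{LemmaGamma}.
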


\begin{proof}
First of all note that
\begin{equation}\label{consmean0}
 \sum_v v Q^+(f_0,f_0)(v)=\la X +Y\ra  m_0=\Delta_1 m_0.
\end{equation}
Hence,
using \eqref{wild2}
one gets
\[
  M_1(q_n)=
\frac{\Delta_1}{n} \sum_{j=0}^{n-1}   M_1(q_j) .
\]
By Lemma \ref{LemmaGamma}
\begin{equation}\label{M1qn}
 M_1(q_n)=m_0 \frac{\Gamma(\Delta_1+n)}{\Gamma(\Delta_1)\Gamma(n+1)}.
\end{equation}
Thus, by  \eqref{wild1} and \eqref{serieHypergeo},
\[
M_1(f_t)= e^{-t} \sum_{n \geq 0} (1-e^{-t})^{n}M_1(q_n)  =m_0 e^{-t} \sum_{n \geq 0} (1-e^{-t})^{n}   \frac{\Gamma(\Delta_1+n)}{\Gamma(\Delta_1)\Gamma(n+1)}= e^{t(\Delta_1-1)}.
\]
\end{proof}

\begin{proposition}\label{propmomentir} Let
$\la X^r +Y^r\ra<+\infty$ for some $r \in (1,2]$.
If $f_t$ is the unique solution of equation \eqref{boltz-genfun}
with initial condition $f_0$ such that
$M_1(f_0)=m_0$ and $M_r(f_0)<+\infty$, then
$t \mapsto M_r(f_t)$ is  a continuous function and
\[
M_r(f_t) \leq M_r(f_0)e^{\a_r t} +e^{\a_r t}
H(t).
\]
Here $H(t)$ is given by
\[
H(t)=\int_0^t
  [\Delta_r  m_0 e^{(\alpha_1-\alpha_r) s}
 +
 (2\la X\ra \la Y \ra )^{r/2}  m_0^r e^{(r\alpha_1- \alpha_r) s}   ]ds.
\]
\end{proposition}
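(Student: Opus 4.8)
The plan is to mimic the Wild-sum argument already used for $M_1$ in Proposition \ref{propmomenti}, but keeping track of the inhomogeneous term coming from the cross moments. First I would apply Lemma \ref{inequalitysum} from the Appendix to the building blocks $q_n$ of the Wild representation \eqref{wild1}--\eqref{wild2}. Since $M_1(q_n) = m_0\,\Gamma(\alpha_1+1+n)/[\Gamma(\alpha_1+1)\Gamma(n+1)]$ by \eqref{M1qn} (noting $\Delta_1 = \alpha_1+1$), the bound from Lemma \ref{inequalitysum} applied to $Q^+(q_j,q_{n-1-j})$ gives, for each $n\ge 1$,
\[
M_r(q_n) \le \frac{1}{n}\sum_{j=0}^{n-1}\Big[\Delta_r M_r(q_j) + (\text{cross terms in }M_1(q_j),M_1(q_{n-1-j}))\Big].
\]
The cross terms involve $\langle X^r\rangle m_0 (\cdots) + \langle Y^r\rangle m_0(\cdots)$ and the product term $(2\langle X\rangle\langle Y\rangle)^{r/2} M_1(q_j)M_1(q_{n-1-j})$; I would write all of these explicitly using \eqref{M1qn}.

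The key computational step is then to recognise the resulting recursion. The homogeneous part is the same as in \eqref{ineqfourier}, solved by $\Gamma$-ratios via Lemma \ref{LemmaGamma}; the inhomogeneous parts are convolutions of $\Gamma$-ratio sequences, which by the Vandermonde/Chu identity again produce $\Gamma$-ratio sequences (this is exactly the combinatorial identity behind the fact that $e^{-t}\sum_n(1-e^{-t})^n \Gamma(a+n)/[\Gamma(a)\Gamma(n+1)] = e^{(a-1)t}$). So I expect $M_r(q_n)$ to be dominated by a finite linear combination of terms $c\,\Gamma(\beta+n)/[\Gamma(\beta)\Gamma(n+1)]$ with $\beta \in \{\Delta_r,\ \Delta_1+\text{something},\ 2\Delta_1-\text{something}\}$ corresponding to the three exponents $\alpha_r$, $\alpha_1$, $r\alpha_1$ appearing in $H(t)$. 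Summing against the Wild weights $e^{-t}(1-e^{-t})^n$ and using \eqref{serieHypergeo} converts each $\Gamma$-ratio sequence into the exponential $e^{(\beta-1)t}$, and collecting everything into an integral (writing $e^{\alpha_r t}\int_0^t e^{(\alpha_1-\alpha_r)s}ds = (e^{\alpha_1 t}-e^{\alpha_r t})/(\alpha_1-\alpha_r)$, etc.) yields precisely the stated $H(t)$. Continuity of $t\mapsto M_r(f_t)$ follows because the Wild series converges uniformly on compact time intervals (each $M_r(q_n)$ is finite by the recursion, and the $\Gamma$-ratio growth is polynomial in $n$, dominated by the geometric weights), so the sum is a locally uniform limit of continuous functions; alternatively one differentiates under the sum.

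The main obstacle I anticipate is bookkeeping rather than conceptual: one must handle the convolution of two distinct $\Gamma$-ratio sequences $\Gamma(a+j)/\Gamma(j+1)$ and $\Gamma(b+n-1-j)/\Gamma(n-j)$ and identify the sum $\frac1n\sum_{j=0}^{n-1}(\cdots)$ with a single $\Gamma$-ratio in $n$; this is where Lemma \ref{LemmaGamma} (or an explicit Chu--Vandermonde computation) must be invoked carefully, and where the precise constants $\Delta_r m_0$ and $(2\langle X\rangle\langle Y\rangle)^{r/2}m_0^r$ in $H(t)$ are pinned down. A secondary subtlety is that Lemma \ref{inequalitysum} gives an \emph{inequality} (because of the $(2\langle X\rangle\langle Y\rangle)^{r/2}$-type bound on mixed moments when $r\le 2$), so the whole argument produces an upper bound, not an identity — which is exactly what the statement asserts. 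One should also check at the outset that $M_r(q_n)<+\infty$ for every $n$ (immediate by induction from the recursion, given $M_r(f_0)<+\infty$ and $\langle X^r+Y^r\rangle<+\infty$), so that all manipulations are legitimate.
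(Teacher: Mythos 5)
Your route is genuinely different from the paper's: the paper uses the Wild series only to establish that $t\mapsto M_r(f_t)$ is finite and continuous on a maximal open interval (via the crude bound $M_r(q_n)\le K^{n+1}$ of Lemma \ref{LemmaGamma2}, a restart argument, and Abel's theorem), and then obtains the quantitative bound by writing the equation in Duhamel form, $e^tM_r(f_t)=M_r(f_0)+\int_0^te^sM_r(Q^+(f_s,f_s))\,ds$, applying Lemma \ref{inequalitysum} at the level of $f_s$ itself, and closing with the tailored Gronwall lemma (Lemma \ref{gronwall2}), which simultaneously yields the stated $H(t)$ and rules out finite-time blow-up. Your plan to push the Gamma-ratio calculus of Proposition \ref{propmomenti} through the $r$-th moment recursion is a legitimate alternative in spirit, but as written it has two concrete gaps. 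First, the Chu--Vandermonde identity does not apply to the mixed term: Lemma \ref{inequalitysum} contributes $\bigl(2\langle X\rangle\langle Y\rangle M_1(q_j)M_1(q_{n-1-j})\bigr)^{r/2}$, and for $r<2$ this is the $r/2$-th \emph{power} of a product of Gamma ratios, not a convolution of Gamma-ratio sequences. You can only compare $\bigl[\Gamma(\Delta_1+j)/\Gamma(j+1)\bigr]^{r/2}$ to a genuine Gamma ratio up to bounded multiplicative constants, so after summing against the Wild weights you recover the correct exponential rate $e^{r\alpha_1 t}$ but with a constant strictly larger than $(2\langle X\rangle\langle Y\rangle)^{r/2}m_0^r$; the claim that the computation ``yields precisely the stated $H(t)$'' therefore fails, and you would prove only a weaker inequality of the same shape.

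Second, your continuity argument rests on the assertion that $M_r(q_n)$ grows polynomially in $n$, so that the Wild series converges uniformly on compacts. This does not follow from anything you invoke: the recursion $M_r(q_n)\le \frac{\Delta_r}{n}\sum_{j<n}M_r(q_j)+b_n$ is handled in the paper only through Lemma \ref{LemmaGamma2}, which delivers the geometric bound $K^{n+1}$ and hence convergence of $\sum_n(1-e^{-t})^nM_r(q_n)$ only for small $t$ --- this is exactly why the paper needs the open-interval, restart and Abel-theorem machinery before it can apply Gronwall globally. A polynomial bound $M_r(q_n)\le Cn^\gamma$ with $\gamma>\max(\Delta_r-1,\,r\alpha_1)$ \emph{can} in fact be proved by induction on the recursion (the inhomogeneous term $b_n$ is polynomially bounded once $M_1(q_j)\sim Cj^{\alpha_1}$ is used), and doing so would let your uniform-convergence argument go through and would actually streamline the continuity part of the paper's proof; but that induction is a missing step, not a consequence of Lemma \ref{LemmaGamma} or \ref{LemmaGamma2}, and without it the proposal is incomplete. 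If you want the exact constants of the statement, the cleanest repair is to switch, as the paper does, to the integral form of the equation plus Gronwall once local finiteness and continuity are in hand.
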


\begin{proof}
Let us start by proving that
\begin{equation}\label{powerseriemoment}
t \mapsto M_r(f_t)=e^{-t}\sum_{n \geq 0}(1-e^{-t})^n M_r(q_n)
\end{equation}
is a continuous function on the set
$\{t>0:  M_r(f_t)<+\infty\}$, which turns out to be an open interval.
Identity in equation \eqref{powerseriemoment} is immediate by \eqref{wild1} and Fubini theorem.
Clearly, the series may diverge for some $t<+\infty$.
We claim that
there is $T=T(f_0,X,Y)>0$
such that
$M_r(f_t) \leq +\infty$ for every $t \in [0,T]$.
Using \eqref{wild2} and Lemma \ref{inequalitysum} one can check that
\[
M_r(q_n) =
\frac{1}{n} \sum_{j=0}^{n-1}   M_r(Q^+(q_j,q_{n-1-j}))
= \frac{\Delta_r}{n} \sum_{j=0}^{n-1}  M_r(q_j)+ \frac{B}{n} \sum_{j=0}^{n-1} [M_1(q_j)+M_1(q_j)^r],
\]
for a suitable $B=B(X,Y,r)<+\infty$. Now \eqref{M1qn} and well known asymptotics for the Gamma function give
\[
M_r(q_n) \leq
\frac{1}{n} \sum_{j=0}^{n-1}    M_r(q_j)+B_1 n^\beta,
\]
for suitable $B_1$ and $\beta$. Since Lemma \ref{LemmaGamma2} yields that
$M_r(q_n) \leq K^{n+1}$,   \eqref{wild1} gives
\[
M_r(f_t) = e^{-t} \sum_{n \geq 0} (1-e^{-t})^{n}M_r(q_n) \leq e^{-t} \sum_{n \geq 0} (1-e^{-t})^{n}K^{n+1}.
\]
This shows that $I=\{t>0:  M_r(f_t)<+\infty\}$ is non-empty.
Let us show that $I$ is open. Note that
$t \mapsto e^tM_r(f_t)$ is non-decreasing. Set $s_0:=\sup\{t:M_r(f_t)<+\infty \}$. If $s_0<+\infty$ and
$s_0 \in I$, one can consider the function  $t \mapsto f_{s_0+t}$ as a solution of
\eqref{boltz} with initial condition $f_{s_0}$ such that $M_r(f_{s_0})<+\infty$.
By the previous argument, it must be that $M_r(f_{t+\eps})<+\infty$ for any $\eps \in [0,T(f_{s_0},X,Y)]$, that
gives a contradiction.
 The continuity on $I$ follows by observing that, for $z=(1-e^{-t})$ with $t \in I$,
$\sum_{n \geq 0}z^n M_r(q_n)$ is a convergent power series.

Finally let us prove that, if $s_0:=\sup\{t:M_r(f_t)<+\infty \}<+\infty$
then $\lim_{t \to s_0^{-}} M_r(f_t)=+\infty $.
If $s_0<+\infty$, the previous argument show that $s_0 \not \in I$, that is
$(1-z_0)\sum_{n \geq 0}z_0^n M_r(q_n)=+\infty$ for $z_0=(1-e^{-s_0})$.
Then, Abel theorem yields that $\lim_{z\to z_0^-}(1-z)\sum_{n \geq 0}z^n M_r(q_n)=(1-z_0)\sum_{n \geq 0}z_0^n M_r(q_n)=+\infty $.

Now, by multiplying  equation \eqref{boltz} by $e^t$ and integrating on $[0,t)$ we obtain
\[
e^t f_t(v)=f_0(v)+\int_0^t e^s Q^+(f_s,f_s)(v) ds.
\]
Hence, Fubini Theorem gives
\[
e^t  M_r(f_t)=M_r(f_0)+\int_0^t e^s M_r(Q^+(f_s,f_s)) ds .
\]
Now note that, by Lemma \ref{inequalitysum} in the Appendix
\[
 M_r(Q^+(f_s,f_s)) \leq  \Delta_r(M_1(f_s)+M_r(f_s)) +
 B_r M_1(f_s)^r,
\]
where $B_r=(2\la X\ra \la Y \ra )^{r/2}$ and $\Delta_r=\la X\ra^r + \la Y \ra^r$.
Since $M_1(f_s)=m_0 e^{\alpha_1 s}$, it follows
\[
 M_r(Q^+(f_s,f_s)) \leq  \Delta_r m_0 e^{\alpha_1 s}
 +
 B_r  m_0^r e^{r \alpha_1 s}
    + \Delta_r M_r(f_s).
\]
Denote
$m_r(t)=e^t M_r(f_t)$. We get
\[
m_r(t)\leq  \Delta_r \int_0^t m_r(s) ds +m_r(0)+
\int_0^t e^s [\Delta_rm_0  e^{\alpha_1 s}
 +
 B_r  m_0^r e^{r \alpha_1 s}   ] ds .
\]
Finally, thanks to Gronwall inequality 
which can be applied by virtue of the first part of the proof (see Lemma \ref{gronwall2}), it holds
\[
m_r(t)\leq m_r(0)e^{\Delta_r t} +  \int_0^t e^{\Delta_r (t-s)}
 e^s [\Delta_r m_0  e^{\alpha_1 s}
 +
 B_r  m_0^r e^{r \alpha_1 s}   ] ds,
\]
and $s_0=+\infty$.
\end{proof}

\begin{proposition}[\bf{Evolution of the variance}]\label{propVar}
Let $Var(f_0)<+\infty$
and assume that
\begin{equation}\label{condiziomomentosecondo}
\la X^2+Y^2\ra <+\infty.
\end{equation}
Then
\begin{equation}\label{vart-general}
\begin{split}
  Var(f_t)& =
  M_2(f_0)e^{\a_2t}-m_0^2 e^{2\alpha_1 t}  +
  \beta m_0 \Big [ \frac{e^{\a_1t}-e^{\a_2t}}{\a_1-\a_2}\J\{\a_1 \not  =\a_2 \}
+te^{\a_2t} \J\{\a_1 =\a_2 \}
\Big ]   \\
 &+ 2 \gamma m_0^2 \Big [ \frac{e^{2\a_1t}-e^{\a_2t}}{2\a_1-\a_2}\J\{2\a_1 \not =\a_2 \}
+te^{\a_2t} \J\{2\a_1 =\a_2 \}
\Big ]
   \\
\end{split}
\end{equation}
where
\[
\beta := Var(X)+Var(Y) \quad \text{and} \quad \gamma := \la X \ra \la Y \ra.
\]
\end{proposition}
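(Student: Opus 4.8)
The plan is to derive a linear ODE for $M_2(f_t)$ by differentiating the Boltzmann equation \fer{boltz-genfun} twice in $z$, evaluating at $z=1$, and then to obtain $Var(f_t) = M_2(f_t) - M_1(f_t)^2$ using the already-known formula $M_1(f_t)=m_0 e^{\alpha_1 t}$ from Proposition \ref{propmomenti}. First I would note that, under \eqref{condiziomomentosecondo}, Proposition \ref{propmomentir} (with $r=2$) guarantees that $M_2(f_t)<+\infty$ for all $t>0$ and that $t\mapsto M_2(f_t)$ is continuous, so the formal computations below are justified. The key computation is the second moment of the gain operator: applying Lemma \ref{inequalitysum} in the Appendix — here used as an identity rather than an inequality, since the relevant interaction \fer{coll0} gives $Q^+$ as the law of $\sum_{i=1}^{V_1}Y_i+\sum_{i=1}^{V_2}X_i$ — one gets
\[
M_2(Q^+(f,f)) = \Delta_2 M_2(f) + \beta M_1(f) + 2\gamma M_1(f)^2,
\]
with $\Delta_2 = \la X\ra^2 + \la Y\ra^2$, $\beta = Var(X)+Var(Y)$, and $\gamma = \la X\ra\la Y\ra$. (This is exactly the identity already used in the excerpt to compute $Var(f_\infty)$, with $M_1$ no longer equal to a constant.)

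Next I would convert \fer{boltz} into an ODE for $M_2(f_t)$. Multiplying \fer{boltz} by $v^2$ and summing over $v$, and using the displayed identity for $M_2(Q^+(f_t,f_t))$ together with $M_1(f_t)=m_0 e^{\alpha_1 t}$, yields
\[
\frac{d}{dt}M_2(f_t) = (\Delta_2 - 1) M_2(f_t) + \beta m_0 e^{\alpha_1 t} + 2\gamma m_0^2 e^{2\alpha_1 t}
= \alpha_2 M_2(f_t) + \beta m_0 e^{\alpha_1 t} + 2\gamma m_0^2 e^{2\alpha_1 t}.
\]
This is a first-order linear ODE with a two-term exponential forcing. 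Solving by the integrating factor $e^{-\alpha_2 t}$ gives
\[
M_2(f_t) = M_2(f_0) e^{\alpha_2 t} + \beta m_0 \int_0^t e^{\alpha_2(t-s)} e^{\alpha_1 s}\,ds + 2\gamma m_0^2 \int_0^t e^{\alpha_2(t-s)} e^{2\alpha_1 s}\,ds.
\]
Evaluating the two integrals splits into cases: $\int_0^t e^{\alpha_2(t-s)}e^{\alpha_1 s}\,ds = \frac{e^{\alpha_1 t}-e^{\alpha_2 t}}{\alpha_1-\alpha_2}$ when $\alpha_1\neq\alpha_2$ and $= t e^{\alpha_2 t}$ when $\alpha_1=\alpha_2$; likewise $\int_0^t e^{\alpha_2(t-s)}e^{2\alpha_1 s}\,ds = \frac{e^{2\alpha_1 t}-e^{\alpha_2 t}}{2\alpha_1-\alpha_2}$ when $2\alpha_1\neq\alpha_2$ and $= t e^{\alpha_2 t}$ when $2\alpha_1=\alpha_2$. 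These produce exactly the bracketed terms with the indicator functions $\J\{\cdot\}$ in \eqref{vart-general}.

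Finally I would subtract $M_1(f_t)^2 = m_0^2 e^{2\alpha_1 t}$ to get $Var(f_t)$, which reproduces \eqref{vart-general} verbatim. The only genuinely non-routine point is the rigorous justification that one may differentiate \fer{boltz-genfun} twice at $z=1$ (equivalently, interchange the sum over $v$ with $\partial_t$) and that the resulting ODE holds classically; this is handled by the finiteness and continuity of $t\mapsto M_2(f_t)$ established in Proposition \ref{propmomentir}, combined with the integral form \fer{boltz-integral} of the equation. Everything else is the elementary solution of a linear ODE and bookkeeping of the resonant cases $\alpha_1=\alpha_2$ and $2\alpha_1=\alpha_2$.
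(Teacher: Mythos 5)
Your proposal is correct and follows essentially the same route as the paper's proof: the second-moment identity for $Q^+$ from Lemma \ref{inequalitysum} (formula \eqref{squarethesum}), the resulting linear first-order equation $\frac{d}{dt}M_2(f_t)=\alpha_2 M_2(f_t)+\beta m_0 e^{\alpha_1 t}+2\gamma m_0^2 e^{2\alpha_1 t}$ obtained via the integral form \eqref{boltz-integral}, solution by integrating factor with the resonant cases $\alpha_1=\alpha_2$ and $2\alpha_1=\alpha_2$, and subtraction of $M_1(f_t)^2$. The paper likewise invokes Proposition \ref{propmomentir} for the continuity of $t\mapsto M_2(f_t)$ to justify the computation, so there is nothing to add.
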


\begin{proof} Recall that, by Proposition \ref{propmomentir},  $t \mapsto M_2(f_t)$
is a continuous function.
Using  formula   \eqref{squarethesum} in Lemma \ref{inequalitysum} we obtain
\begin{equation}\label{sumsquare}
  M_2(Q^+(f_s,f_s)) =M_2(f_s)[\la X \ra^2+\la Y \ra^2]+ M_1(f_s) \beta+2 \gamma M_1(f_s)^2.
\end{equation}
Combining \eqref{boltz-integral}, \eqref{consmean-general}   and \eqref{sumsquare}
one gets
\begin{equation}\label{ev_of_variance}
M_2(f_t)-M_2(f_0)=\int_0^t (\alpha_2 M_2(f_s)+c(s))ds,
\end{equation}
with
\[
c(s)=m_0 \Big ( \beta e^{\a_1s} +2 \gamma m_0 e^{2\a_1 s}  \Big) .
\]
Solving the equation one obtains
\[
M_2(f_t)=e^{\a_2 t} \Big (M_2(f_0)+ \int_0^t c(s) e^{-\alpha_2 s}ds \Big ),
\]
which implies
\[
Var(f_t)=e^{\a_2 t} \Big (M_2(f_0)+ \int_0^t c(s) e^{-\alpha_2 s}ds \Big )-m_0^2 e^{2\a_1t}.
\]
Simple computations then give \eqref{vart-general}.
\end{proof}

Note that from $\la X +Y\ra =1$ and
 $\la X^2+Y^2\ra <+\infty$, it follows $\a_2<0=\a_1$
and $2\gamma=-\alpha_2$. Then \eqref{vart-general} implies
\begin{equation}\label{vart}
  Var(f_t)=\frac{(Var(X)+Var(Y)) m_0}{|\alpha_2|}+[Var(f_0)-\frac{(Var(X)+Var(Y)) m_0}{|\alpha_2|}] e^{-|\a_2| t},
\end{equation}
for every $t>0$.

\section{The case of increasing mean}\label{sec:scaled}

The asymptotic behavior of the solution to the Boltzmann equation \fer{boltz} when  the mean value is preserved 
in time has been fully described in Theorem \ref{thmconvergence2}. In this case, in fact, existence 
of a steady solution together with its main properties follows.

A completely different situation arises when  $\alpha_1=\la X+Y\ra-1>0$, as in  Case
$2$. Now the mean value diverges to $+\infty$, and the usual way to extract
information about the large-time behavior of the solution \cite{Pareschi06} is to
scale the solution of equation \eqref{boltz-genfun}, or equivalently
\eqref{boltz-laplace}, with respect to the mean, in such a way to maintain the mean
constant. This allows to look for non-trivial asymptotics profiles of
\[
\tilde h(\xi,t)=\tilde f_t\Big(\frac{\xi}{m(t)}\Big),
\]
where
\[
m(t):=M_1(f_t)=m_0 e^{\a_1 t}.
 \]
It is easy to verify that the Laplace transform $\tilde h$ of the scaled density
satisfies the new equation
\begin{equation}\label{boltz-laplace-scaled}
 \frac{\partial }{\partial t} \tilde h_t(\xi) +  \tilde h_t(\xi) +
\alpha_1  \xi  \frac{\partial }{\partial \xi}  \tilde h_t(\xi) =  \tilde h_t\Big(-m(t) k_X\Big (\frac{\xi}{m(t)}\Big)\Big) \tilde h_t \Big(-m(t)  k_Y\Big (\frac{\xi}{m(t)} \Big )\Big).
\end{equation}

Note that if $f^{(1)}$ and $f^{(2)}$ are two solutions of  \eqref{boltz}, with initial conditions $f_0^{(1)}$ and $f_0^{(2)}$, while $h^{(1)}$ and $h^{(2)}$ are
the respective scaled solutions, then
\[
 d_{r}^*(h_t^{(1)},h_t^{(2)})  \leq d_{r}^*(f_t^{(1)},f_t^{(2)}) m(t)^{-r}
 =d_{r}^*(f_t^{(1)},f_t^{(2)})m_0^{-r} e^{-tr\alpha_1},
\]
and  \eqref{ineqdr}
yields that
\[
 d_{r}^*(h_t^{(1)},h_t^{(2)})
  \leq m_0^{-r} d_{r}^*(h_0^{(1)},h_0^{(2)}) e^{t(\alpha_r-r\alpha_1 )}.
\]
In view of the previous bound, it appears natural  to assume that
\begin{equation}\label{condizionealphar}
(\alpha_r/r-\alpha_1)<0 \quad \text{for some $r>1$}.
\end{equation}
Condition \eqref{condizionealphar} is reminiscent of the analogous condition for the
existence of self-similar solutions of Kac like kinetic models, see
\cite{BaLa,BoCeGa,Pareschi06}. Indeed, as we shall see, there is a precise connection
between the asymptotic profile of the scaled solution of
 \eqref{boltz} and the asymptotic profile  of the solution of a particular Kac like equation.
It is important to remark that, since $\alpha_r=\E[X]^r+\E[Y]^r-1$, condition
\eqref{condizionealphar} is equivalent to ask that $r \mapsto \alpha_r/r$ is
decreasing in $r=1$, i.e. $ (\alpha_r/r)'<0$ for $r=1$. Computing  $(\alpha_r/r)'$ one
obtains that \eqref{condizionealphar} is equivalent to
\begin{equation}\label{condizionealphar2}
 \E[X](\log(\E[X])-1)+\E[Y](\log(\E[Y])-1)-1<0.
\end{equation}
Clearly the previous relation is true if $\E[Y]<1$ and $\E[X]<1$, while is clearly false if one of the two expectations is bigger than $e$. In Figure \ref{Fig1}
a numerical evaluation of the region where  \eqref{condizionealphar} is verified and $\E[X]+\E[Y]>1$ is reported.
Additional information on the function $r \mapsto \alpha_r/r-\alpha_1$ can be found in
Lemma 3.10 in \cite{Pareschi06}.

\begin{figure}[ht]
\begin{centering}
\begin{tabular}{cc}
\includegraphics[scale=0.4]{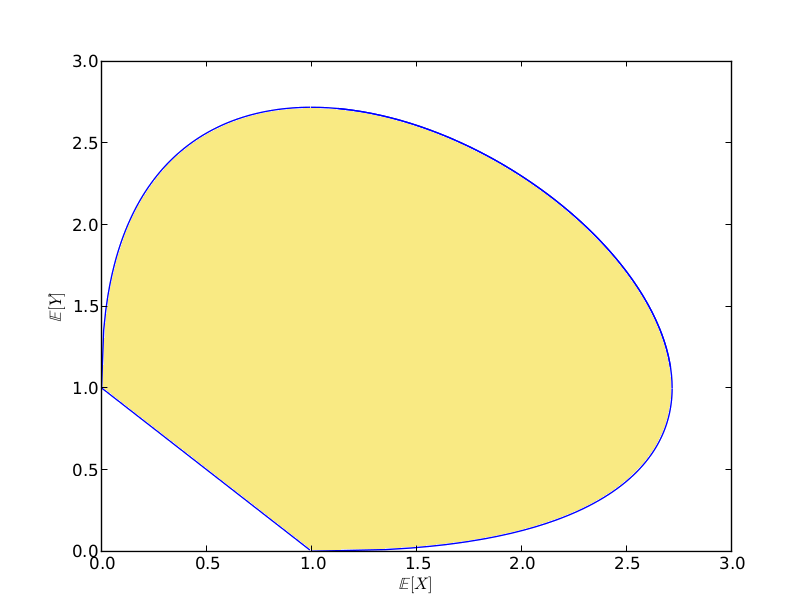}
\end{tabular}
\end{centering}
\caption{The interior  of the curve  (colored) represents the region in the plane $(\E[X],\E[Y])$ where \eqref{condizionealphar} is verified  and $\E[X]+\E[Y]>1$. } \label{Fig1}
\end{figure}

Recalling that  here $m(t) \to +\infty$, using \eqref{D6}, one can write
\[
-m(t)  k_X(\xi/m(t) )=\xi \la X \ra + O(\xi^r m(t)^{1-r}),
\]
and hence,
\[
\lim_{t \to +\infty} \Big ( - m(t)  k_X(\xi/m(t) \Big) =\xi \la X \ra .
\]
The same result holds for $Y$. In this way one can see that  (at least formally)  $\tilde h_t$ converges to the solution  $\tilde h_\infty(\xi)=\int_{[0,+\infty)} e^{-\xi v}
h_\infty(dv)$ of
\begin{equation}\label{boltz-laplace-res-inf}
 \tilde h_\infty(\xi)  +\alpha_1  \xi  \frac{\partial }{\partial \xi}  \tilde h_\infty(\xi) =
 \tilde h_\infty(\xi \la X \ra  ) \tilde h_\infty(\xi \la Y \ra  ).
\end{equation}
For a precise statement see next Proposition \ref{Prop_conscaled}. Now following
\cite{BaLa,BaTo2,BoCeGa}, \eqref{boltz-laplace-res-inf} is equivalent to the integral
equation
\begin{equation}\label{boltz-laplace-res-inf2}
 \tilde h_\infty(\xi)  =\int_0^1
 \tilde h_\infty(\xi \la X \ra \tau^{\alpha_1}  ) \tilde h_\infty(\xi \la Y \ra\tau^{\alpha_1}  ) d\tau .
\end{equation}
This shows that $h_\infty$ is the fixed point of the particular {\it smoothing transformation}:
\[
T(h_\infty,h_\infty)=\text{\bf Law} \Big (\la X \ra U^{\alpha_1} V'+\la Y \ra U^{\alpha_1} V'' \Big )
\]
where $U,V',V'$ are independent, $U$ is uniformly distributed on $(0,1)$, and $V',V''$ are distributed with law $h_\infty$.
Note that
\[
\la \la X \ra U^{\alpha_1}+\la Y \ra U^{\alpha_1} \ra =\frac{\la X \ra +\la Y \ra}{1+\alpha_1}=1.
\]
Positive fixed points of smoothing transformations are deeply studied, mainly in
connection with the so-called Branching Random Walk, see e.g.
\cite{AlsmeyerMeiners13,liu98}. It is worth noticing that in general $-$  with the
exception of few cases, see  \cite{BaTo,BaTo2}  $-$ there is no analytical
expression of the law the of the fixed point of a given smoothing transformation. Even
in our case it does not seem easy to obtain an explicit expression of $h_\infty$. An
exception is the very special case when $E[X]=E[Y]=1$, where it is immediate to see
that the solution is given by the exponential distribution
\[
\hat h_\infty(\xi)=\frac{1}{1-\xi \th}.
\]
Nevertheless, it is always possible to recursively determine the exact expression of
the integer moments of $h_\infty$. Indeed, using the fact that
$h_\infty=T(h_\infty,h_\infty)$ and the Newton binomial formula, if $m_i:=\int v^i
h_\infty(dv)<+\infty$ and $m_1=\int v h_\infty(dv)=1$, one gets
 \begin{equation}\label{recursionmoment}
 m_i=\frac{1}{(\E[X]+\E[Y]-1)i+1-\E[X]^i-\E[Y]^i}   \sum_{j=1}^{i-1}\binom{i}{j} \E[X]^i\E[Y]^{i-j}  m_j m_{i-j}.
\end{equation}
As we shall see in Proposition \ref{probBaLa} below, $\int v^r h_\infty(dv)<+\infty$
if and only if $\a_r/r<\a_1$. Hence, $m_i<+\infty$ if and only if $(\E[X]+\E[Y]-1)i+1-\E[X]^i-\E[Y]^i>0$
so that \eqref{recursionmoment} is well-defined.
Note that when at least one of the two moments $\E[X]$ and $\E[Y]$ is strictly bigger than 1, then
there is $\bar r<+\infty$ such that $(\E[X]+\E[Y]-1)\bar r+1-\E[X]^{\bar r}-\E[Y]^{\bar r}<0$ and hence
$\int v^{\bar r} h_\infty(dv)=+\infty$. This means that in this case
$h_\infty$ has heavy tails. On the contrary if both $\E[X]$ and $\E[Y]$ are smaller than 1, then
$h_\infty$ has moments of all orders.

The fact that the steady state of \eqref{boltz-laplace-scaled} satisfies \eqref{boltz-laplace-res-inf2} suggests that,
for a better understanding of the asymptotic behavior  of equation \eqref{boltz-laplace-scaled},
one can first to consider the large-time behavior of the equation
\begin{equation}\label{boltzLRscaled}
 \frac{\partial }{\partial t} \tilde h_t(\xi) +  \tilde h_t(\xi)+ \alpha_1  \xi  \frac{\partial }{\partial \xi}  \tilde h_t(\xi) =  \tilde h_t\Big(\la X\ra \xi \Big)
 \tilde h_t \Big( \la Y \ra \xi \Big),
\end{equation}
which is satisfied by the scaled solution $\tilde h(\xi,t)=\tilde f_t({e^{-\a_1t}}\xi/m_0)$
of the equation
\begin{equation}\label{boltzLR}
 \frac{\partial }{\partial t} \tilde f_t(\xi) +  \tilde f_t(\xi)
 =  \tilde f_t\Big(\la X\ra \xi \Big)
 \tilde f_t \Big( \la Y \ra \xi \Big).
\end{equation}
 Equation \fer{boltzLR} is associated to the very simple linear collision rule
 \begin{equation}\label{coll1}
 V_i'= {V_i} \la X \ra + {V_j} \la Y \ra, \qquad
 V_j'=  V_j  \la X \ra   + V_i  \la Y \ra
\end{equation}
and it is a special case of a general class of kinetic equations which have been
deeply investigated, see for instance in \cite{BaLa,BaLaMa,BoCeGa,MaTo,PeRe}. It is
immediate to realize that the interactions \fer{coll1} correspond to a mean version of
the interactions \fer{coll0}, and can be formally obtained from \fer{coll0} by
substituting $X$ and $Y$ with their mean values. Of course, this is consistent only if
the mean values of $X$ and $Y$ are integer numbers. For this equation, one can resort
to the results of \cite{BaLa} to obtain

\begin{proposition}[\bf{\cite{BaLa}}]\label{probBaLa} Let $\la X+Y\ra>1$, and let
  \eqref{condizionealphar} hold true for some $r >1$. Then, there exists a unique solution to \eqref{boltz-laplace-res-inf2}  such that $\int_{[0,+\infty)} v h_\infty(dv)=1$.
Moreover, this solution satisfies $\int_{[0,+\infty)} v^q h_\infty(dv)<+\infty$ for $q>1$
if and only if $\alpha_q/q<\alpha_1$.
Finally, if $\int v f_0(dv)=m_0<+\infty$ and $\tilde f_t$ is a solution of  \eqref{boltzLR},
then $\int v f_t(dv)=m_0e^{\alpha_1 t}$ and $\tilde f_t({e^{-\a_1t}}\xi/m_0)$ converges  to $\tilde h_\infty(\xi)$ for every $\xi>0$ as $t \to \infty$.
\end{proposition}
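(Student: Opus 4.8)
The statement collects results of \cite{BaLa}; the plan is to reconstruct the argument around the contractivity of the smoothing transformation naturally attached to \eqref{boltz-laplace-res-inf2}. Write $\Delta_r=\E[X]^r+\E[Y]^r=1+\alpha_r$ as in \eqref{alpha}. A probability law $h_\infty$ on $[0,+\infty)$ solves \eqref{boltz-laplace-res-inf2} precisely when it is a fixed point of
\[
T(h)=\mathrm{Law}\big(\E[X]\,U^{\alpha_1}V'+\E[Y]\,U^{\alpha_1}V''\big),\qquad \widetilde{T(h)}(\xi)=\int_0^1\tilde h\big(\E[X]\,\xi\tau^{\alpha_1}\big)\,\tilde h\big(\E[Y]\,\xi\tau^{\alpha_1}\big)\,d\tau,
\]
where $U$ is uniform on $(0,1)$ and $V',V''\sim h$ are independent. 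First I would record that $T$ preserves the mean, since $\E[\,\E[X]\,U^{\alpha_1}+\E[Y]\,U^{\alpha_1}\,]=(\E[X]+\E[Y])/(1+\alpha_1)=1$, and that it contracts $d^*_r$: using $\tilde h(a)\tilde h(b)-\tilde g(a)\tilde g(b)=(\tilde h(a)-\tilde g(a))\tilde h(b)+\tilde g(a)(\tilde h(b)-\tilde g(b))$, the bounds $|\tilde h|,|\tilde g|\le1$ on $(0,\infty)$ and the definition of $d^*_r$,
\[
d^*_r\big(T(h),T(g)\big)\ \le\ \lambda_r\,d^*_r(h,g),\qquad \lambda_r:=\frac{\Delta_r}{1+r\alpha_1}=\frac{1+\alpha_r}{1+r\alpha_1},
\]
and $\lambda_r<1$ is exactly condition \eqref{condizionealphar}, i.e.\ $\alpha_r/r<\alpha_1$.

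Existence of $h_\infty$ then follows by iterating $T$ from the unit-mean seed $\delta_1$: setting $h_{n+1}=T(h_n)$ one has $M_1(h_n)=1$ and $d^*_r(h_{n+1},h_n)\le\lambda_r^n d^*_r(h_1,h_0)<+\infty$, so $\{h_n\}$ is $d^*_r$-Cauchy; the uniform first-moment bound gives tightness, and combined with the Cauchy property it produces a limit $h_\infty$ of mean $1$ with $\tilde h_n\to\tilde h_\infty$ pointwise, which, passing to the limit in $\tilde h_{n+1}=\widetilde{T(h_n)}$, solves \eqref{boltz-laplace-res-inf2}. For the moment dichotomy, the ``if'' part uses an inequality of the type of Lemma \ref{inequalitysum} to get $M_q(T(h))\le\lambda_q M_q(h)+C_q\big(1+M_1(h)^q\big)$, so that $\lambda_q<1$ together with $M_1(h_n)\equiv1$ bounds $\sup_nM_q(h_n)$, whence $M_q(h_\infty)<+\infty$ by lower semicontinuity of $M_q$; the ``only if'' part applies $M_q$ to $h_\infty=T(h_\infty)$ and isolates the top-order term, giving $(1-\lambda_q)M_q(h_\infty)=\big(\text{a nonnegative sum of products of lower-order moments}\big)$ — for integer $q$ this is exactly \eqref{recursionmoment}, the general case being covered by log-convexity of moments — so that $\alpha_q/q\ge\alpha_1$, i.e.\ $1-\lambda_q\le0$, forces $M_q(h_\infty)=+\infty$. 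Uniqueness of the mean-one solution in the \emph{whole} class of probability laws I would not reprove: it is the fixed-point uniqueness theorem for smoothing transformations, and I would cite \cite{BaLa} (see also \cite{AlsmeyerMeiners13,liu98}).

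For the last assertion, \eqref{boltzLR} is the linear Boltzmann equation of the mean collision rule \eqref{coll1}; it admits a Wild representation as in Proposition \ref{propWild}, and differentiating its Laplace form at $\xi=0$ gives $\frac{d}{dt}M_1(f_t)=\alpha_1 M_1(f_t)$, hence $\int v\,f_t(dv)=m_0e^{\alpha_1 t}$. Put $\tilde h_t(\xi)=\tilde f_t(e^{-\alpha_1 t}\xi/m_0)$, so that $M_1(\tilde h_t)\equiv1$ and $\tilde h_t$ solves \eqref{boltzLRscaled}; by \eqref{boltz-laplace-res-inf}, $\tilde h_\infty$ is the \emph{stationary} solution of \eqref{boltzLRscaled}, equivalently $\tilde h_\infty(m_0e^{\alpha_1 t}\xi)$ is the self-similar solution of \eqref{boltzLR}. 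Running the argument of Theorem \ref{thmconvergence} verbatim for the linear rule (its Laplace kernels being the exact dilations $\xi\mapsto\E[X]\xi$, $\xi\mapsto\E[Y]\xi$) yields a $d^*_r$-contraction for \eqref{boltzLR}, and rescaling exactly as in Section \ref{sec:scaled} gives, whenever $d^*_r(\tilde h_0,\tilde h_\infty)<+\infty$,
\[
d^*_r(\tilde h_t,\tilde h_\infty)\ \le\ d^*_r(\tilde h_0,\tilde h_\infty)\,e^{t(\alpha_r-r\alpha_1)}\ \longrightarrow\ 0\qquad(t\to+\infty),
\]
since $\alpha_r-r\alpha_1<0$; as $d^*_r$-convergence forces pointwise convergence of Laplace transforms, $\tilde f_t(e^{-\alpha_1 t}\xi/m_0)\to\tilde h_\infty(\xi)$ for every $\xi>0$. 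The bound $d^*_r(\tilde h_0,\tilde h_\infty)<+\infty$ holds when $M_r(f_0)<+\infty$ (then $M_r(\tilde h_0)<+\infty$, $M_r(h_\infty)<+\infty$ by the ``if'' part, and $d^*_r$ is controlled by $r$-moments via Lemma \ref{lemma3}); the general case $m_0<+\infty$ follows by approximating $f_0$ in $d^*_1$ by laws of the same mean with a finite $r$-moment, using that the scaled dynamics \eqref{boltzLRscaled} is non-expansive in $d^*_1$ and that $d^*_1(\mu,\nu)\le M_1(\mu)+M_1(\nu)$, and concluding by the triangle inequality.

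The hard part is the uniqueness clause: two mean-one fixed points of $T$ need not lie at finite $d^*_r$-distance, so the contraction does not close the argument by itself, and one genuinely needs the fixed-point/renewal theory for smoothing transformations — this is precisely the ingredient imported from \cite{BaLa}. A lesser technical point is the reduction, in the last step, from initial data with a finite $r$-moment to data with only a finite first moment, which requires a careful approximation combined with the uniform-in-time control above.
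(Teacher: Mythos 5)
Your proposal is correct in substance, but it takes a genuinely different route from the paper: the paper does not prove Proposition \ref{probBaLa} at all — its entire ``proof'' is the observation that the spectral function $\mu(\cdot)$ of \cite{BaLa} coincides here with $r\mapsto \alpha_r/r-\alpha_1$, after which Proposition 2.1 and Theorem 2.2 of \cite{BaLa} are invoked wholesale. You instead reconstruct most of the argument with the tools already present in the paper: the $d_r^*$-contraction of the smoothing transformation with factor $\lambda_r=(1+\alpha_r)/(1+r\alpha_1)$ (whose strict contractivity is exactly \eqref{condizionealphar}), existence by iterating from $\delta_1$ plus tightness, the moment dichotomy via a Lemma \ref{inequalitysum}--type recursion, and the large-time convergence by rerunning Theorem \ref{thmconvergence} for the linear rule \eqref{coll1} and rescaling as in Section \ref{sec:scaled} — which is essentially how the paper itself handles Proposition \ref{propMaTo}. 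What your approach buys is a largely self-contained proof consistent with the paper's Fourier/Laplace-metric machinery; what the citation buys, and what you correctly identify as irreducible, is uniqueness of the mean-one fixed point in the \emph{whole} class of probability laws (the contraction only gives uniqueness among laws at finite $d_r^*$-distance, i.e.\ with a finite $r$-moment, and ruling out exotic fixed points with no finite moment of order $>1$ genuinely requires the theory of \cite{BaLa,AlsmeyerMeiners13,liu98}). Two soft spots you should tighten if you want this to stand on its own: the limit of the iteration has mean exactly $1$ only after you invoke the uniform $r$-moment bound (uniform integrability), so that step should come after, not before, the ``if'' part of the moment dichotomy; and the ``only if'' direction for non-integer $q$ is not really settled by ``log-convexity of moments'' — the clean statement is that $\mu$ is convex with $\mu(1)=0$, so finiteness of $M_q$ with $\mu(q)\ge 0$ must be excluded by the sharper moment criterion of \cite{BaLa}, which you are in any case entitled to cite, as the paper does.
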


\begin{proof}
The result follows from Proposition 2.1 and Theorem 2.2  in \cite{BaLa}. We have only to note that the spectral function $\mu(\cdot)$ defined in Section 2.2
of \cite{BaLa} in this case coincides with $r \mapsto \alpha_r/r -\alpha_1$.
\end{proof}

One can adapt some results in \cite{BaLa,MaTo} to obtain

\begin{proposition}\label{propMaTo} Let $\la X+Y\ra>1$ and assume that \eqref{condizionealphar} holds true
for some $r \in (1,2)$. Let $\tilde f_t$ be a solution of  \eqref{boltzLR}  with
initial condition $f_0$ with finite $r$ moment. Then
\begin{equation}\label{boundmomentMaTo}
M_r(f_t) \leq C_1 e^{\alpha_r t}.  
\end{equation}
Moreover, if $\tilde h_t(\xi)=\tilde f_t({e^{-\a_1t}}\xi/m_0)$, $m_0=\int v f_0(dv)$, and $\tilde h_\infty(\xi)$
is the same as in {\rm Proposition \ref{probBaLa}}, then
\[
d_r^*(h_t,h_\infty) \leq C_2 e^{(\a_r -r\a_1)t},
\]
for a suitable constant $C_2=C_2(f_0,h_\infty)$.
\end{proposition}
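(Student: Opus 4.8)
The statement bundles two logically independent claims: the growth bound \eqref{boundmomentMaTo} for the $r$-th moment of the solution of the \emph{unscaled} linear equation \eqref{boltzLR}, and the exponential decay of $d^*_r(h_t,h_\infty)$ for its rescaled counterpart. For both, the plan is to transplant to \eqref{boltzLR} the machinery already developed for the bilinear equation \eqref{boltz}, exploiting that \eqref{boltzLR} has the form $\partial_t\tilde f_t+\tilde f_t=\hat Q^+(\tilde f_t,\tilde f_t)$ with $\hat Q^+(\hat f,\hat g)(\xi)=\hat f(\la X\ra\xi)\hat g(\la Y\ra\xi)$, so that it admits the Wild representation of Proposition \ref{propWild}. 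Concretely: (i) a Gronwall estimate for $m_r(t):=e^tM_r(f_t)$ gives the moment bound; (ii) the verbatim analogue of the contraction Theorem \ref{thmconvergence} for \eqref{boltzLR}, combined with the rescaling, gives the convergence.

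For (i), argue as in Proposition \ref{propmomentir}: writing \eqref{boltzLR} in terms of the measure $f_t$ as $\partial_t f_t+f_t=Q^+(f_t,f_t)$ with $Q^+$ the gain operator of the collision rule \eqref{coll1}, putting it in integral form, multiplying by $e^t$ and integrating against $v^r$ gives $\tfrac{d}{dt}m_r(t)=e^tM_r(Q^+(f_t,f_t))$. Using the elementary inequality $(a+b)^r\le a^r+b^r+c_r(a^{r-1}b+ab^{r-1})$, valid for $a,b\ge0$ and $r\in(1,2]$, together with independence of the colliding values and the concavity bound $M_{r-1}(f_t)\le M_1(f_t)^{r-1}$, one obtains
\[
M_r(Q^+(f_t,f_t))=\E\big[(\la X\ra V_1+\la Y\ra V_2)^r\big]\le\Delta_r\,M_r(f_t)+D_r\,M_1(f_t)^r,
\]
with $D_r=c_r(\la X\ra^{r-1}\la Y\ra+\la X\ra\la Y\ra^{r-1})$. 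Since $M_1(f_t)=m_0e^{\alpha_1 t}$ for \eqref{boltzLR} as well (cf. Proposition \ref{probBaLa}), and since $t\mapsto M_r(f_t)$ is finite and continuous by the Wild-series argument in the first part of the proof of Proposition \ref{propmomentir}, Gronwall's inequality applied to $m_r$ produces $M_r(f_t)\le M_r(f_0)e^{\alpha_r t}+C\,e^{r\alpha_1 t}$, which under \eqref{condizionealphar} is the claimed exponential control of $M_r(f_t)$.

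For (ii), the proof of Theorem \ref{thmconvergence} transfers to \eqref{boltzLR} without change: in the chain \eqref{ineqfourier} one uses only that each Laplace transform $\tilde q_n$ is bounded by $1$ and that $\sup_{\xi>0}|\la X\ra\xi|^r/|\xi|^r=\la X\ra^r$ (and likewise for $Y$), whence $d^*_r(f^{(1)}_t,f^{(2)}_t)\le d^*_r(f^{(1)}_0,f^{(2)}_0)e^{\alpha_r t}$ whenever $M_1(f^{(1)}_0)=M_1(f^{(2)}_0)$ (needed for $d^*_r$ to be finite). Combining this with the scaling identity $d^*_r(h^{(1)}_t,h^{(2)}_t)\le m(t)^{-r}d^*_r(f^{(1)}_t,f^{(2)}_t)$, $m(t)=m_0e^{\alpha_1 t}$, already used for \eqref{boltz} just before \eqref{condizionealphar}, gives $d^*_r(h^{(1)}_t,h^{(2)}_t)\le m_0^{-r}d^*_r(f^{(1)}_0,f^{(2)}_0)e^{(\alpha_r-r\alpha_1)t}$. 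It remains to feed $h_\infty$ into this estimate: by \eqref{boltz-laplace-res-inf}, $\tilde h_\infty$ solves $\tilde h_\infty+\alpha_1\xi\partial_\xi\tilde h_\infty=\tilde h_\infty(\la X\ra\xi)\tilde h_\infty(\la Y\ra\xi)$, i.e.\ it is a stationary solution of the rescaled equation \eqref{boltzLRscaled}; since the rescaling $\tilde h_t(\xi)=\tilde f_t(e^{-\alpha_1 t}\xi/m_0)$ is a bijection between solutions of \eqref{boltzLR} and of \eqref{boltzLRscaled}, and \eqref{boltzLR} has a unique solution (Wild series, as in Proposition \ref{propWild}), the rescaled solution issued from the probability measure $f^{(2)}_0$ with $\tilde f^{(2)}_0(\xi)=\tilde h_\infty(m_0\xi)$ is identically $\tilde h_\infty$. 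Here $M_1(f^{(2)}_0)=m_0M_1(h_\infty)=m_0$, and $M_r(h_\infty)<+\infty$ under \eqref{condizionealphar} by Proposition \ref{probBaLa}, so $M_r(f^{(2)}_0)<+\infty$ and hence, by Lemma \ref{lemma3}, $d^*_r(f_0,f^{(2)}_0)<+\infty$. Taking $f^{(1)}_0=f_0$ and this $f^{(2)}_0$ yields $d^*_r(h_t,h_\infty)\le m_0^{-r}d^*_r(f_0,f^{(2)}_0)e^{(\alpha_r-r\alpha_1)t}$, i.e.\ the claim with $C_2=m_0^{-r}d^*_r(f_0,f^{(2)}_0)$.

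The only step requiring genuine care, rather than mere transcription of earlier arguments, is this last identification: one must be sure that the stationary profile furnished by Proposition \ref{probBaLa} is actually \emph{produced} by the rescaled dynamics run from its own initial datum, which rests on uniqueness for \eqref{boltzLR} and on the bijectivity of the rescaling between \eqref{boltzLR} and \eqref{boltzLRscaled}. The moment bound of part (i) is the other technical point worth double-checking, both for its own sake and because it guarantees that $M_r(h_t)=m_0^{-r}e^{-r\alpha_1 t}M_r(f_t)$ stays bounded, so that $d^*_r(h_t,h_\infty)$ is finite for all $t$ and the contraction estimate is not vacuous; everything else reduces to bookkeeping with the change of variables and the topological equivalence $d_r\sim d^*_r$.
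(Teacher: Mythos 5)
Your part (ii) is correct and is genuinely different in character from what the paper does: the paper disposes of both claims by citing Theorems 3.2 and 2.2 of \cite{MaTo} (and Proposition 3.12 of \cite{BaPe}), whereas you give a self-contained argument inside the paper's own framework. In particular your identification of $\tilde h_\infty$ as the rescaled flow of \eqref{boltzLR} issued from the datum $\tilde f_0^{(2)}(\xi)=\tilde h_\infty(m_0\xi)$ --- resting on the stationarity of $\tilde h_\infty$ for \eqref{boltzLRscaled}, the bijectivity of the rescaling, and Wild-series uniqueness --- is sound, and securing $d_r^*(f_0,f_0^{(2)})<+\infty$ via $M_r(h_\infty)<+\infty$ (Proposition \ref{probBaLa}) and Lemma \ref{lemma3} is exactly what makes the contraction non-vacuous. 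This buys transparency at the price of re-deriving the contraction estimate \eqref{ineqfourier} for \eqref{boltzLR}, which is indeed a verbatim transcription.

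Part (i), however, has a genuine gap in its last line. Your Gronwall computation correctly yields $M_r(f_t)\le M_r(f_0)e^{\alpha_r t}+C e^{r\alpha_1 t}$, but under \eqref{condizionealphar} one has $\alpha_r<r\alpha_1$, so the \emph{second} term dominates and what you have actually proved is $M_r(f_t)\le C_1 e^{r\alpha_1 t}$, strictly weaker than the displayed \eqref{boundmomentMaTo}; the sentence ``which under \eqref{condizionealphar} is the claimed exponential control'' is exactly backwards. Moreover no argument can close this gap: by Jensen's inequality $M_r(f_t)\ge M_1(f_t)^r=m_0^r e^{r\alpha_1 t}$, which is incompatible with $M_r(f_t)\le C_1e^{\alpha_r t}$ whenever $m_0>0$ and $\alpha_r<r\alpha_1$. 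So \eqref{boundmomentMaTo} can only be read with $e^{r\alpha_1 t}$ in place of $e^{\alpha_r t}$; the rate $e^{\alpha_r t}$ pertains only to a centered quantity such as $\E[\,|V_t-\E[V_t]|^r]$, which is reached through the weighted-sum representation of the solution and the Bahr--Esseen inequality rather than through the raw Gronwall bound. You should either prove the bound for the centered $r$-th moment, or state explicitly that you obtain $C_1e^{r\alpha_1 t}$ and verify that this weaker bound still suffices where the proposition is invoked (in the estimates of $A_2$ and $A_3$ in the proof of Proposition \ref{Prop_conscaled}).
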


\begin{proof}
The first part of the thesis is a minor modification of Theorem 3.2 in \cite{MaTo},
observing that in this case the function $\mathfrak{S}(r)$ --defined in equation (32) of \cite{MaTo} -- coincides with $\la X\ra^r + \la Y \ra^r-1=\alpha_r$.
The last part of the thesis follows by a simple modification of
Thm. 2.2 in \cite{MaTo}. See also Proposition 3.12 in \cite{BaPe}
\end{proof}

In the next proposition we show that, under suitable assumptions,  the scaled solutions of
\eqref{boltz} and \eqref{boltzLR} merge when $t \to +\infty$. Combining this result with
the known asymptotics for the scaled solution of \eqref{boltzLR}
one obtains the asymptotic behavior of the scaled solution of  \eqref{boltz} in Case $2$.

\begin{proposition}[\bf{Scaled solution}]\label{Prop_conscaled} Let $\la X+Y\ra>1$ and assume that \eqref{condizionealphar} holds true
for some $r \in (1,2)$.
Let $\tilde f^{(1)}$ be a solution of \eqref{boltz-laplace} with initial conditions $\tilde f_0$ with finite $r$-th moment and $\tilde f^{(2)}$ be a solution
of  \eqref{boltzLR} with the same initial condition $\tilde f_0$ with mean $m_0$.
Set $\tilde h^{(i)}_t(\xi)=\tilde f_t^{(i)}({e^{-\a_1t}}\xi/m_0)$ for $i=1,2$.
Then,
\[
 d_{r}^*(h_t^{(1)},h_t^{(2)})
  \leq C(f_0)[t e^{(\a_r -r\a_1)t} +  e^{-\a_1 rt}].
\]
In particular, $h^{(1)}_t$ converges weakly to the probability distribution
$h_\infty$ given in {\rm Proposition \ref{probBaLa}}.
\end{proposition}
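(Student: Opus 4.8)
The plan is to write the two scaled equations \eqref{boltz-laplace-scaled} and \eqref{boltzLRscaled} in Duhamel (mild) form and then compare the two scaled solutions by a Gronwall estimate in the metric $d_r^*$, much as in the contraction argument of Theorem \ref{thmconvergence}; the genuinely new point is the control of the error made in replacing the exact rescaled cumulant maps $\xi\mapsto-m(t)k_X(\xi/m(t))$ and $\xi\mapsto-m(t)k_Y(\xi/m(t))$ by the linear maps $\xi\mapsto\E[X]\xi$ and $\xi\mapsto\E[Y]\xi$. Before starting I would record that $d_r^*(h^{(1)}_t,h^{(2)}_t)$ is finite for every $t$: Proposition \ref{propmomentir} gives $M_r(f^{(1)}_t)<+\infty$ for all $t$, Proposition \ref{propMaTo} gives $M_r(f^{(2)}_t)\le C_1e^{\alpha_r t}$, so both scaled laws have finite $r$-th moment, and Lemma \ref{lemma3} then bounds $d_r^*$. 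Integrating the transport operator $\partial_t+\alpha_1\xi\partial_\xi$ along its characteristics $s\mapsto\xi e^{-\alpha_1(t-s)}$ turns each scaled equation into
\[
\tilde h_t(\xi)=e^{-t}\,\tilde h_0\big(\xi e^{-\alpha_1 t}\big)+\int_0^t e^{-(t-s)}\,G_s\big(\xi e^{-\alpha_1(t-s)}\big)\,ds,
\]
where $G_s$ is the collision term of the corresponding equation: $G^{(1)}_s(\eta)=\tilde h^{(1)}_s(B_X)\,\tilde h^{(1)}_s(B_Y)$ with $B_X=-m(s)k_X(\eta/m(s))$ and $B_Y=-m(s)k_Y(\eta/m(s))$ for \eqref{boltz-laplace-scaled}, and $G^{(2)}_s(\eta)=\tilde h^{(2)}_s(\E[X]\eta)\,\tilde h^{(2)}_s(\E[Y]\eta)$ for \eqref{boltzLRscaled}. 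Since $\tilde h^{(1)}_0=\tilde h^{(2)}_0=\tilde f_0(\cdot/m_0)$, the difference $w_t:=\tilde h^{(1)}_t-\tilde h^{(2)}_t$ has no initial-datum contribution.

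Next I would estimate $|G^{(1)}_s(\eta)-G^{(2)}_s(\eta)|$ pointwise. Writing $\tilde B_X=\E[X]\eta$, $\tilde B_Y=\E[Y]\eta$, adding and subtracting, and using that a Laplace transform of a probability measure at a non-negative argument is $\le 1$, one bounds this difference by $|w_s(B_X)|+|w_s(B_Y)|+|\tilde h^{(2)}_s(B_X)-\tilde h^{(2)}_s(\tilde B_X)|+|\tilde h^{(2)}_s(B_Y)-\tilde h^{(2)}_s(\tilde B_Y)|$. For the first two terms, Jensen's inequality gives $0\le B_X\le\E[X]\eta$, whence $|w_s(B_X)|\le d_r^*(h^{(1)}_s,h^{(2)}_s)\,B_X^r\le d_r^*(h^{(1)}_s,h^{(2)}_s)\,\E[X]^r\eta^r$, and similarly for $Y$, producing the factor $\Delta_r=\E[X]^r+\E[Y]^r$. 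For the last two, since the scaled density $h^{(2)}_s$ has mean $1$ (as $M_1(f^{(2)}_s)=m_0e^{\alpha_1 s}$), the mean value theorem gives $|\tilde h^{(2)}_s(a)-\tilde h^{(2)}_s(b)|\le|a-b|$, while the Taylor estimate for the cumulant functions (cf. \eqref{D6}) gives $|B_X-\tilde B_X|=m(s)\,\big|{-k_X(\eta/m(s))}-\E[X]\,\eta/m(s)\big|\le C\,\eta^r\,m(s)^{1-r}$, and likewise for $Y$. Collecting these bounds yields
\[
\big|G^{(1)}_s(\eta)-G^{(2)}_s(\eta)\big|\le\eta^r\Big(\Delta_r\,d_r^*(h^{(1)}_s,h^{(2)}_s)+C\,m(s)^{1-r}\Big).
\]

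Substituting $\eta=\xi e^{-\alpha_1(t-s)}$ into the Duhamel identity for $w_t$, dividing by $\xi^r$ and taking the supremum over $\xi>0$, one obtains, with $u(t):=d_r^*(h^{(1)}_t,h^{(2)}_t)$, the linear integral inequality
\[
u(t)\le\Delta_r\int_0^t e^{-(1+r\alpha_1)(t-s)}\,u(s)\,ds+C\int_0^t e^{-(1+r\alpha_1)(t-s)}\,m(s)^{1-r}\,ds.
\]
The decisive observation is that $\Delta_r-(1+r\alpha_1)=\alpha_r-r\alpha_1<0$ by assumption \eqref{condizionealphar}: after multiplying by $e^{(1+r\alpha_1)t}$, a Gronwall argument on the resulting inequality produces a factor $e^{(\alpha_r-r\alpha_1)t}$, together with an extra linear factor $t$ in the resonant case in which the growth exponent of the forcing coincides with $\Delta_r$ (that is, $\alpha_r=\alpha_1$); the inhomogeneous integral, in which $m(s)^{1-r}=m_0^{1-r}e^{(1-r)\alpha_1 s}$, is elementary and contributes the exponentially small remainder recorded in the statement. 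Putting the two contributions together gives $d_r^*(h^{(1)}_t,h^{(2)}_t)\le C(f_0)\big[\,t\,e^{(\alpha_r-r\alpha_1)t}+e^{-\alpha_1 rt}\,\big]$.

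Finally, since $\alpha_r-r\alpha_1<0$ and $\alpha_1>0$, the right-hand side vanishes as $t\to+\infty$; combining this with $\tilde h^{(2)}_t(\xi)\to\tilde h_\infty(\xi)$ for every $\xi>0$ (Proposition \ref{probBaLa}) and with $|\tilde h^{(1)}_t(\xi)-\tilde h^{(2)}_t(\xi)|\le\xi^r\,d_r^*(h^{(1)}_t,h^{(2)}_t)$ shows $\tilde h^{(1)}_t(\xi)\to\tilde h_\infty(\xi)$ for every $\xi>0$, hence $h^{(1)}_t$ converges weakly to $h_\infty$ by the continuity theorem for Laplace transforms. I expect the main obstacle to be the uniform-in-$\xi$ control of the linearization error after division by $\xi^r$ — that is, making precise the global bound $|{-m(s)k_X(\eta/m(s))}-\E[X]\eta|\le C\,\eta^r\,m(s)^{1-r}$ over the whole range of the argument, combining the Taylor expansion of the cumulant near the origin with the elementary a priori bound $0\le-k_X(\zeta)\le\E[X]\zeta$ away from it — together with the bookkeeping that preserves the $d_r^*$-weighting when the error terms are carried through the Duhamel integral; finiteness of $d_r^*(h^{(1)}_t,h^{(2)}_t)$, needed to start the Gronwall scheme, is precisely where Propositions \ref{propmomentir} and \ref{propMaTo} enter.
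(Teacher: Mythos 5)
Your overall strategy coincides with the paper's: an integral (Duhamel) form of the equation, a triangle-inequality splitting of the difference of collision terms into a contractive part (giving the factor $\Delta_r$) and a linearization error, and a Gronwall argument; the paper simply works with the unscaled equations and rescales at the very end, which is equivalent to your characteristics computation. The one place where you genuinely diverge is the estimate of the linearization error, and that is where the argument fails to deliver the stated inequality. You bound $|\tilde h^{(2)}_s(B_X)-\tilde h^{(2)}_s(\widetilde B_X)|$ by the mean value theorem, using that the scaled law has mean one, and then by $|B_X-\widetilde B_X|\le C\eta^r m(s)^{1-r}$. Both steps are correct (including the global-in-$\eta$ version, obtained by splitting at $\eta\sim m(s)$), but the resulting forcing $C\,m(s)^{1-r}=Cm_0^{1-r}e^{(1-r)\a_1 s}$ does not produce the remainder you claim. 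Carrying your integral inequality through Gronwall gives
\[
d_r^*(h^{(1)}_t,h^{(2)}_t)\le C\bigl[e^{(\a_r-r\a_1)t}+e^{(1-r)\a_1 t}\bigr]
\]
(with an extra factor $t$ only in your resonant case $\a_1=\a_r$), and $e^{(1-r)\a_1 t}=e^{-r\a_1 t}\,e^{\a_1 t}$ exceeds the stated remainder $e^{-r\a_1 t}$ by a full factor $e^{\a_1 t}$. Whenever $\a_r<\a_1$ --- which is compatible with all the hypotheses, e.g. $\E[X]=\E[Y]=0.9$ and $r=2$, where $\a_1=0.8$, $\a_2=0.62$ and $\a_2/2<\a_1$ --- your term $e^{(1-r)\a_1 t}=e^{-0.8t}$ decays strictly more slowly than both $te^{(\a_r-r\a_1)t}=te^{-0.98t}$ and $e^{-r\a_1 t}=e^{-1.6t}$, so the inequality in the statement is not established. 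The qualitative conclusion (weak convergence of $h^{(1)}_t$ to $h_\infty$) does survive, since your bound still tends to zero.

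The mean value theorem is lossy here because it exploits only the first moment of $h^{(2)}_s$, and $|B_X-\widetilde B_X|$ is of order $\eta^r m(s)^{1-r}$ --- one power of $m(s)$ too large. The paper instead estimates $|\tilde f^{(2)}_t(-k_Y(\xi))-\tilde f^{(2)}_t(\E[Y]\xi)|\,\xi^{-r}\le C(Y)[1+M_r(f^{(2)}_t)]$, i.e. inequality \eqref{D7}, proved by expanding the Laplace transform itself to order $r$ at both arguments rather than to order one, and then invokes the moment bound $M_r(f^{(2)}_t)\le C_1e^{\a_r t}$ of Proposition \ref{propMaTo}. This yields the unscaled forcing $C[1+e^{\a_r t}]$ and, after rescaling, exactly the two terms $te^{(\a_r-r\a_1)t}$ and $e^{-r\a_1 t}$ of the statement (the factor $t$ there arises from the structural resonance $\Delta_r=1+\a_r$, not from $\a_1=\a_r$). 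Note that you cite Proposition \ref{propMaTo} only for the finiteness of $d_r^*$ and never use \eqref{boundmomentMaTo} in the error estimate; importing it through an estimate of type \eqref{D7} is precisely what is needed to close the stated rate.
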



\begin{proof}
We have
\[
\begin{split}
\xi^{-r}\Big | & \tilde f_t^{(1)}\Big (-k_X(\xi)\Big) \tilde f_t^{(1)}\Big(-k_Y(\xi)\Big)
- \tilde f_t^{(2)}\Big(-\la X\ra \xi\Big) \tilde f_t^{(2)}\Big(-\la Y\ra \xi\Big)\Big|  \\
& \leq \xi^{-r}\Big | \tilde f_t^{(1)}\Big (-k_X(\xi)\Big)\tilde f_t^{(1)}\Big(-k_Y(\xi)\Big)
 -\tilde f_t^{(2)}\Big (-k_X(\xi)\Big)\tilde f_t^{(2)}\Big (-k_Y(\xi) \Big) \Big |
\\
&
+ \xi^{-r}\Big |
\tilde f_t^{(2)}\Big (-k_X(\xi)\Big)\tilde f_t^{(2)}\Big (-k_Y(\xi) \Big)
-\tilde f_t^{(2)}\Big (-k_X(\xi)\Big)\tilde f_t^{(2)}\Big(\la Y\ra \xi\Big)
\Big |
\\
& +\xi^{-r} \Big | \tilde f_t^{(2)}\Big (-k_X(\xi)\Big)\tilde f_t^{(2)}\Big(\la Y\ra \xi\Big)
- \tilde f_t^{(2)}\Big(\la X\ra \xi\Big) \tilde f_t^{(2)}\Big(\la Y\ra \xi\Big)\Big|
\\
& =:A_1+A_2+A_3.
\\
\end{split}
\]
Now, arguing as in the proof of Theorem  \ref{thmconvergence},
\[
A_1
 \leq \Delta_r d_r^*(f_t^{(1)},f_t^{(2)}).
\]
Moreover, using \eqref{D7}
\[
A_2 \leq \xi^{-r}\Big |
\tilde f_t^{(2)}\Big (-k_Y(\xi) \Big)
-\tilde f_t^{(2)}\Big(\la Y\ra \xi\Big)
\Big | \leq C(Y) [1+M_r(f_t^{(2)})]
\]
and analogously
\[
A_3 \leq C(X) [1+M_r(f_t^{(2)})].
\]
At this stage recall that, by Proposition \ref{propMaTo},
\[
M_r(f_t^{(2)}) \leq C_1 e^{\alpha_r t}.
\]
Putting all estimates together one gets
\begin{equation}\label{ineqLRvBr}
\begin{split}
H_t(\xi):=\xi^{-r} \Big |  & \tilde f_t^{(1)} \Big (-k_X(\xi)\Big) \tilde f_t^{(1)}\Big(-k_Y(\xi)\Big)
- \tilde f_t^{(2)}\Big(-\la X\ra \xi\Big) \tilde f_t^{(2)}\Big(-\la Y\ra \xi\Big)\Big|
\\ &
\leq
\Delta_r d_r^*(f_t^{(1)},f_t^{(2)}) +C_3[1+ e^{\alpha_r t}].
\\
\end{split}
\end{equation}
Now, consider that
\[
 \frac{\partial }{\partial t} (e^{t} \tilde f_t^{(1)}(\xi) )
 =  e^t\tilde f_t^{(1)}\Big(-k_X(\xi) \Big)
 \tilde f_t^{(1)} \Big( -k_Y(\xi) \Big),
\]
and
\[
 \frac{\partial }{\partial t} (e^{t} \tilde f_t^{(2)}(\xi) )
 = e^t \tilde f_t^{(2)}\Big(\la X\ra \xi \Big)
 \tilde f_t^{(2)} \Big( \la Y \ra \xi \Big).
\]
Last, consider that $\tilde f_0^{(1)}=\tilde f_0^{(2)}$, and, using  \eqref{ineqLRvBr}
\[
e^t \xi^{-r} \Big |\tilde f_t^{(1)} (\xi)-\tilde f_t^{(2)} (\xi)\Big|
\leq
\int_0^t e^s H_s(\xi)ds \leq  \int_0^t e^s [\Delta_r d_r^*(f_s^{(1)},f_s^{(2)}) +C_3(1+ e^{\alpha_r s})]ds.
\]
Hence,
\[
e^t d_r^*(f_t^{(1)},f_t^{(2)}) \leq \Delta_r  \int_0^t  e^s d_r^*(f_s^{(1)},f_s^{(2)}) ds
 + C_3 \int_0^t  [e^s+e^{(1+\alpha_r) s})]ds.
\]
By Lemma \ref{lemma3},
\[
d_r^*(f_t^{(1)},f_t^{(2)})\leq c_r [M_r(f_t^{(1)})+M_r(f_t^{(2)})]
\]
and, by Propositions \ref{propmomentir} and \ref{propMaTo}, $t \mapsto M_r(f_t^{(1)})+M_r(f_t^{(2)})$ is a locally bounded function.
Hence, Gronwall Lemma  gives
\[
d_r^*(f_t^{(1)},f_t^{(2)}) \leq C_3  e^{(\Delta_r-1)t} \int_0^t [
e^{(1-\Delta_r)s}+1)]  ds.
\]
If now $\alpha_r\not=0$, it holds
\[
d_r^*(h_t^{(1)},h_t^{(2)})
\leq C_3 e^{-\a_1 rt} [ t e^{\alpha_r t}+ \frac{1-e^{\alpha_rt}}{-\alpha_r} ]
\leq C_4 [t e^{(-\a_1 r+\a_r)t} +  e^{-\a_1 rt}].
\]
In the remaining case $\alpha_r=0$ it holds $d_r^*(h_t^{(1)},h_t^{(2)})\leq C_3 e^{-\a_1 rt}$.
\end{proof}

\section{The quasi-invariant limit}\label{Sec:grazing}

In this section we give  precise statements and proofs  of the limit procedure
considered in Section \ref{S:Quasiinvariant}.

\begin{proposition}\label{prop:limite}
Let $\hat f_{t,\eps}(z)$ be the solution of \eqref{boltz-genfun} for $X$ and $Y$ defined in  \eqref{variablegrazing}
where $\E[\tilde X^2+\tilde Y^2]<+\infty$ and   $f_0$ has finite second moment
 and mean equal
to $m_0$.
Then,
$
\hat g(t,z)=\lim_{\eps \to 0} \hat f_{t/\eps,\eps}(z)
$
satisfies
\begin{equation}\label{equationgrazing}
\begin{split}
&\partial_t  \hat g(t,z) = b_1 ( \hat p_{\tilde X}(z) - z) \partial_z \hat g(t,z)+b_2
m_0 e^{\bar \alpha_1 t}(\hat p_{\tilde Y}(z)-1) \hat g(t,z),
\\
& \hat g(0,z)=\hat f_0(z). \\
\end{split}
\end{equation}
for $(t,z) \in (0,+\infty) \times  [0,1]$,
where   $\bar \alpha_1=b_1(\E[\tilde X]-1)+b_2\E[\tilde Y]$. Moreover, the initial value problem for equation \fer{equationgrazing} has a unique solution, and
\[
M_1(g(t))=m_0  e^{\bar \alpha_1 t}.
\]

\end{proposition}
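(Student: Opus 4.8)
The plan is to proceed in four stages: (i) establish well-posedness of the linear transport equation \eqref{equationgrazing}; (ii) propagate a bound on the second moment that is \emph{uniform} in $\eps$ along the stretched time $\tau/\eps$; (iii) turn the formal Taylor expansion of Section \ref{S:Quasiinvariant} into a quantitative error estimate; and (iv) pass to the limit by an Ascoli--Arzel\`a compactness argument. Throughout set $\hat g_{\tau,\eps}(z):=\hat f_{\tau/\eps,\eps}(z)$, which by Proposition \ref{propWild} is, for each $\tau$, the p.g.f.\ of a probability density on $\N$ and solves $\partial_\tau\hat g_{\tau,\eps}=\eps^{-1}\big[\hat g_{\tau,\eps}(\hat p_X(\cdot))\,\hat g_{\tau,\eps}(\hat p_Y(\cdot))-\hat g_{\tau,\eps}\big]$.

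\emph{Well-posedness of \eqref{equationgrazing}.} Since $\E[\tilde X^2+\tilde Y^2]<+\infty$ we have $\hat p_{\tilde X},\hat p_{\tilde Y}\in C^2([0,1])$ and $\hat p_{\tilde X}(1)=\hat p_{\tilde Y}(1)=1$, so the characteristic ODE $\dot z=-b_1(\hat p_{\tilde X}(z)-z)$ has $z=1$ as an equilibrium; comparing a generic trajectory with the constant solution $\equiv1$ and with the trajectory through $0$ shows that its backward flow $\Phi_{-t}$ maps $[0,1]$ into itself. Along characteristics $Z_t$, \eqref{equationgrazing} reduces to the scalar linear ODE $\frac{d}{dt}\hat g(t,Z_t)=b_2m_0e^{\bar\alpha_1 t}(\hat p_{\tilde Y}(Z_t)-1)\hat g(t,Z_t)$ with datum $\hat f_0$ at $t=0$, which is uniquely solvable; conversely the difference of two solutions solves the homogeneous transport equation with zero datum, hence vanishes identically. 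Thus \eqref{equationgrazing} is well posed in the class of functions that are $C^1$ in $z$ on $[0,1]$. Differentiating \eqref{equationgrazing} in $z$ and letting $z\to1^-$ (licit once $M_2(g(t))<+\infty$, see below), using $\hat p_{\tilde X}'(1)=\E[\tilde X]$ and $\hat p_{\tilde Y}'(1)=\E[\tilde Y]$, the function $y(t):=\partial_z\hat g(t,1)=M_1(g(t))$ solves $\dot y=b_1(\E[\tilde X]-1)y+b_2m_0e^{\bar\alpha_1 t}\E[\tilde Y]$ with $y(0)=m_0$, and integrating this linear ODE gives exactly $M_1(g(t))=m_0e^{\bar\alpha_1 t}$.

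\emph{Uniform moments and the error term.} From \eqref{variablegrazing}, $\E[X]=1+\eps b_1(\E[\tilde X]-1)$ and $\E[Y]=\eps b_2\E[\tilde Y]$, so $\alpha_1=\eps\bar\alpha_1$, $\alpha_2=O(\eps)$, $Var(X)+Var(Y)=O(\eps)$, $\E[X]\E[Y]=O(\eps)$, and $2\alpha_1-\alpha_2=2\eps b_2\E[\tilde Y]+O(\eps^2)$ (note $\E[\tilde Y]>0$ since $P\{Y=0\}\neq1$). Inserting these into the exact formula of Proposition \ref{propVar} (equivalently into Proposition \ref{propmomentir}) and replacing $t$ by $\tau/\eps$, every exponential becomes $e^{O(1)\tau}$, while in each bracketed term the $O(\eps)$ prefactor $\beta:=Var(X)+Var(Y)$ or $\gamma:=\E[X]\E[Y]$ combines with a difference quotient which, the exponents being $O(1)$, is $O(1/\eps)$; hence the whole expression stays bounded and
\[
C_T:=\sup_{0<\eps\le\eps_0}\ \sup_{0\le\tau\le T}M_2(g_{\tau,\eps})<+\infty\qquad(T>0),
\]
while $M_1(g_{\tau,\eps})=m_0e^{\bar\alpha_1\tau}$ exactly by Proposition \ref{propmomenti}. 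Since $\hat p_X,\hat p_Y$ map $[0,1]$ into $[0,1]$ and, for a density $\rho$ on $\N$, $|\partial_z\hat\rho|\le M_1(\rho)$, $|\partial_z^2\hat\rho|\le M_2(\rho)$ on $[0,1]$, Taylor-expanding $\hat g_{\tau,\eps}(\hat p_X(z))$ about $z$ and $\hat g_{\tau,\eps}(\hat p_Y(z))$ about $1$ with Lagrange remainder — using $\hat p_X(z)-z=\eps b_1(\hat p_{\tilde X}(z)-z)$, $\hat p_Y(z)-1=\eps b_2(\hat p_{\tilde Y}(z)-1)$ and $\partial_z\hat g_{\tau,\eps}(1)=m_0e^{\bar\alpha_1\tau}$ — puts the rescaled equation in the form
\[
\partial_\tau\hat g_{\tau,\eps}(z)=b_1(\hat p_{\tilde X}(z)-z)\partial_z\hat g_{\tau,\eps}(z)+b_2m_0e^{\bar\alpha_1\tau}(\hat p_{\tilde Y}(z)-1)\hat g_{\tau,\eps}(z)+R_\eps(\tau,z),
\]
with $\sup_{\tau\le T}\sup_{z\in[0,1]}|R_\eps(\tau,z)|\le C'_T\,\eps$.

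\emph{Passage to the limit.} From the last display the family $\{\hat g_{\cdot,\eps}\}_\eps$ is bounded in $C([0,T]\times[0,1])$ with $\|\partial_z\hat g_{\cdot,\eps}\|_\infty\le C_T$ and $\|\partial_\tau\hat g_{\cdot,\eps}\|_\infty\le C''_T$, hence precompact by Ascoli--Arzel\`a; moreover $\|\partial_z^2\hat g_{\cdot,\eps}\|_\infty\le C_T$ forces $\partial_z\hat g_{\cdot,\eps}$ to converge, along any subsequence along which $\hat g_{\cdot,\eps}$ converges uniformly, to the $z$-derivative of the limit. Passing to the limit in the $\tau$-integrated form of the last display, and using $R_\eps\to0$ uniformly, every subsequential limit $\hat g$ satisfies the integral — hence the differential — form of \eqref{equationgrazing} with $\hat g(0,\cdot)=\hat f_0$; by the uniqueness of the first step the whole family $\hat f_{\cdot/\eps,\eps}$ converges to $\hat g$, which is therefore the solution of \eqref{equationgrazing}, and $M_2(g(t))\le C_T$ by Fatou (justifying the differentiation at $z=1$ used above). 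The one genuinely delicate point is the prerequisite for step (iii), namely the \emph{uniform} second-moment bound: it holds only because in the grazing regime $\alpha_1$, $\alpha_2$, $Var(X)+Var(Y)$ and $\E[X]\E[Y]$ all vanish at exactly the rate in $\eps$ that keeps the moment formula finite when time is stretched by the factor $1/\eps$; the remaining steps are routine.
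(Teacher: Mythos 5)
Your proof is correct and its core is the same as the paper's: the uniform-in-$\eps$ second-moment bound obtained by feeding the $O(\eps)$ scalings of $\alpha_1$, $\alpha_2$, $Var(X)+Var(Y)$ and $\E[X]\E[Y]$ into \eqref{vart-general} at the stretched time $\tau/\eps$, the resulting $O(\eps)$ Lagrange-remainder estimate for the Taylor expansion, Ascoli--Arzel\`a compactness, and identification of the limit through uniqueness of \eqref{equationgrazing}. You differ from the paper in two localized places. First, for well-posedness of the limit equation the paper simply cites the DiPerna--Lions theory of linear transport equations and works with bounded ($W^{1,\infty}$) solutions, whereas you integrate along the characteristics $\dot z=-b_1(\hat p_{\tilde X}(z)-z)$ and check that the backward flow preserves $[0,1]$ (using that $z=1$ is an equilibrium and the field points inward at $z=0$); this is more elementary and self-contained, at the price of restricting the uniqueness class to functions $C^1$ in $z$ --- which is harmless, since your second observation ensures the limit lands in that class. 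Second, to pass to the limit in the term $\partial_z\hat g_{\tau,\eps}$ the paper invokes weak-$*$ convergence in $W^{1,\infty}$ via Banach--Alaoglu, while you use the uniform bound on $\partial_z^2\hat g_{\tau,\eps}$ to upgrade to \emph{uniform} convergence of the first derivatives along subsequences; this is slightly stronger and dovetails with your characteristics-based uniqueness. Both variants are sound; the paper's buys generality (it would survive with less regularity of the coefficients), yours buys a fully self-contained argument. One cosmetic remark: the statement also asserts that the limit is a p.g.f.\ with finite variance, which the paper gets from tightness of $\{g_{\tau,\eps}\}$; your uniform moment bounds give this too, but it is worth saying explicitly.
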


\begin{proof}
Existence and uniqueness of a unique bounded solution in $Q=(0,1)\times (0,T)$, for every $T >0$ to equation  \eqref{equationgrazing} is known (cf. \cite{DiPernaLions89} for a detailed study of this type of linear equations).

 Simple computations show that
\begin{equation*}\label{xyeps1}
 \alpha_2:=
\la X \ra^2 +\la Y \ra^2 -1= 2\eps (\laa \tilde X \raa-1)b_1 +\eps^2((\E[\tilde X-1])^2 b_1^2 +  (\E[Y])^2 b_2^2),
\end{equation*}
while
\begin{equation*}\label{xyeps2}
\beta:=Var(X)+Var(Y)= \eps(\laa (\tilde X-1)^2 \raa b_1 +  \laa  \tilde Y^2 \raa b_2 )-\eps^2((\E[\tilde X-1])^2 {b_1}^2 +  (\E[Y])^2 {b_2}^2),
\end{equation*}
and
\begin{equation*}\label{xyeps3}
 \gamma:= \E[X]\E[Y]=\eps b_2 \E[\tilde Y](1+\eps b_1(\E[\tilde X]-1)).
\end{equation*}
Recall that $\alpha_1=\la X \ra +\la Y \ra-1=\eps \bar \alpha_1$.
Hence, owing to \eqref{vart-general}, one finds that
\begin{equation}\label{xyeps4}
 Var(f_{t,\eps})\leq G(\eps t),
\end{equation}
for a suitable function $G$ bounded on any fixed interval of time $[0,T]$.
In addition we have
\[
| \partial_z^2 \hat f_{t,\eps}(z)|= |\sum_{v \geq 2} v(v-1) z^{v-2} f_{t,\eps}(v)|
\leq \sum_{v \geq 2} v^2 f_{t,\eps}(v) =Var(f_{t,\eps})+M_1(f_{t,\eps})^2.
\]
Recalling that that $M_1(f_{t,\eps})=m_0e^{\bar \alpha_1 \eps t}$ and using
\eqref{xyeps4}, for every $T<+\infty$ one gets the bound
\[
 \sup_{t: t\eps \leq T}| \partial_z^2 \hat f_{t,\eps}(z)| \leq  C,
\]
where $C=C(f_0,\tilde X,\tilde Y,T)$ is independent of $\eps\leq \eps_0$ and $z$.
Hence $\partial_z f_{t,\eps}(z)$ is locally uniformly (in $t$ and $\eps$) Lipshitz in $z$, which means that
\begin{equation}\label{lipunif}
 \Big | \partial_z f_{t,\eps}(z)|_{z=z_0}- \partial_z f_{t,\eps}(z)|_{z=z_1} \Big | \leq L(T)
 |z_0-z_1|.
\end{equation}
for every $z_0,z_1$ in $[0,1]$ and every $t>0$ such that $t\eps<T$ and $\eps \leq \eps_0$.

To prove convergence, write
\[
 \hat f_t(\hat p_X(z))=\hat f_t(z) +\eps b_1 ( \hat p_{\tilde X}(z) - z)  \partial_z \hat f_t(z) +\eps b_1  R_{1,\eps}(t,z),
\]
and
\[
 \hat f_t(\hat p_Y(z))=\hat f_t(1)+\eps b_2 (\hat p_{\tilde Y}(z)-1) \partial_z \hat f_t(1) +\eps b_2 R_{2,\eps}(t,z).
\]
In the previous equations,
\[
 R_{1,\eps}(t,z) =( \hat p_{\tilde X}(z) - z) (  \partial_z \hat f_t(z_{1,\eps,t})- \partial_z \hat f_t(z)),
\]
for some $z_{1,\eps,t} \in [0,1]$ such that $|z-z_{1,\eps,t}| \leq \eps b_1  |\hat p_{\tilde X}(z) - z|$ and
\[
 R_{2,\eps}(t,z) =( \hat p_{\tilde Y}(z) - 1) (  \partial_z \hat f_t(z_{2,\eps,t})- \partial_z \hat f_t(1)),
\]
for some $z_{2,\eps,t}$ such that $0 \leq 1+b_2 \eps (\hat p_{\tilde Y}(z) - 1) \leq  z_{2,\eps,t} \leq 1 $.
Consequently
 \[
\hat f_t (\hat p_X(z) )  \hat f_t(\hat p_Y(z)) -\hat f_t(z) =
\eps b_2 \hat f_t(z)(\hat p_{\tilde Y}(z)-1) m(t)  + \eps b_1 ( \hat p_{\tilde X}(z) - z)   \partial_z \hat f_t(z) + \eps R_{3,\eps}(t,z).
 \]
Here, $m(t)= \partial_z \hat f_t(1)$ and
\[
\begin{split}
 R_{3,\eps}(t,z)& = \eps b_1 b_2( \hat p_{\tilde X}(z) - z)    ( \hat p_{\tilde Y}(z) - 1)\partial_z \hat f_t(z)  m(t)
+\eps b_1b_2( \hat p_{\tilde X}(z) - z) \partial_z \hat f_t(z)  R_{2,\eps}(t,z) \\
& + b_2 R_{2,\eps}(t,z) \hat f_t(z) + R_{1,\eps}(t,z) b_1[1+\eps b_2(\hat p_{\tilde Y}(z)-1)  m(t)  +\eps b_2 R_{2,\eps}(t,z) ]. \\
\end{split}
\]
Let us set
 $\hat g_{t,\eps}(z)=\hat f_{t/\eps}(z)$. Then
\begin{equation}\label{eqscaled2}
 \partial_t \hat g_{t,\eps}(z) =\hat g_{t,\eps}(z) b_2(\hat p_{\tilde Y}(z)-1) M_1(g_{t,\eps})  +
b_1(\hat p_{\tilde X}(z) - z)   \partial_z \hat g_{t,\eps}(z) +  R_{3,\eps}(t/\eps,z).
\end{equation}
Using \eqref{lipunif}, for a suitable constant $C_1(T)$ we obtain
\[
 \sup_{(z,t) \in Q} \sum_{i=1,2} |R_{i,\eps}(t/\eps,z)| \leq   \eps C_1(T),
\]
so that
\begin{equation}\label{R3eps}
  \sup_{(z,t) \in Q} |R_{3,\eps}(t/\eps,z)| \leq  \eps C_2(T).
\end{equation}
Recalling now  that for every $z \in [0,1]$ and every $t\in [0,T]$
\[
 \partial_z \hat g_{t,\eps}(z) \leq  M_1(g_{t,\eps}) \leq C_3(T),
\]
using this with \eqref{R3eps} in equation \eqref{eqscaled2} we obtain
\(
| \partial_t   \hat g_{t,\eps}(z)| \leq C_4(T)
\).
Summarizing, for a suitable bounded positive constant $C_5(T)$ we have
\[
\sup_{(z,t) \in [0,1] \times [0,T]}  | \partial_z \hat g_{t,\eps}(z)| + \sup_{(z,t) \in [0,1] \times [0,T]} |\partial_t \hat g_{t,\eps}(z)| \leq C_5(T).
\]
Combining  Ascoli-Arzel\'a theorem with the Banach-Alaoglu-Bourbaki theorem
one concludes that there is a subsequence $\hat g_{\cdot,\eps_n}(\cdot)$ converging strongly in $C^0([0,1]\times [0,T])$ (for every $T$) and weakly-*
in $W^{1,\infty}(Q)$ to a function $\hat g$.

Since $R_{3,\eps} \to 0$ uniformly by \eqref{R3eps}, one can pass to the limit in \eqref{eqscaled2}
to obtain that $\hat g$ is  a solution of \eqref{equationgrazing}.
Next, uniqueness of the solution of \eqref{equationgrazing} gives that the whole family $\hat g_{\cdot,\eps}$
converges to $\hat g$ strongly in $C^0([0,1]\times [0,T])$, for every $T>0$. Finally, since
the mean and variance of $g_{t,\eps}$ are uniformly bounded with respect to $\eps$, the  sequence $\{g_{t,\eps}\}_{\eps \ge 0}$ is tight, 
and the limit function $g(\cdot,t)$ is also a p.g.f. with finite variance.
\end{proof}

Define  $\phi_X(z)=\sum_{k \geq 0} z^k P\{\tilde X >k\}$ and  $\phi_Y(z)=\sum_{k \geq 0} z^k P\{\tilde Y >k\}$.
Setting $m=\laa \tilde X \raa$, if $b_1(\E[\tilde X]-1)+b_2\E[\tilde Y]=0$, one can write
$(1-m)b_1/b_2=\laa \tilde Y \raa$ so that $ P^*(z):=m^{-1} \phi_X(z)$ and $Q^*(z):=b_2/b_1(1-m)^{-1} \phi_Y(z)$
are p.g.f.. It is worth noticing that, if $b_1=b_2=1$, $P^*$ and $Q^*$ are the p.g.f. of the so called {\it renewal distribution} of
$p_X$ and $p_Y$.
Recall also that given a probability density $g$ on $\N$ with finite mean
$M_1(g)=\sum_k k g(k)$, its {\it size-biased version} is
defined by
\[
 g^*(k):=\frac{k g(k)}{M_1(g)}.
\]

\begin{proposition}\label{prop:steady_grazing} Under the same assumptions of {\rm Proposition \ref{prop:limite}}, if $b_1(\E[\tilde X]-1)+b_2\E[\tilde Y]=0$, then
 the unique $C^1$-function
$\hat g_{\infty}(z)$ with $\hat g_{\infty}(1)=1$ which is a stationary solution of \eqref{equationgrazing}, i.e. the unique solution
of \eqref{stat_grazing},
is given by
\begin{equation}\label{ginfty}
 \hat g_\infty(z)=\exp \Big \{ -m_0 \frac{b_2}{b_1} \int_{z}^1 \frac{(1-\hat p_{\tilde Y}(s))}{(\hat p_{\tilde X}(s) - s)}ds  \Big \}.
\end{equation}
Moreover, $\hat g_{\infty}$ is the p.g.f. of an infinite divisible
distribution $g_{\infty}$ over the non-negative integers.
Finally,  $g_{\infty}$ is the density of a random variable $V_\infty$ satisfying
\begin{equation}\label{sizebiasedinfdiv}
 V_\infty^* \stackrel{d}{=}  V_\infty + 1+ \zeta+\sum_{k=1}^{\gamma} \xi_k
\end{equation}
where $V_\infty^*$ is the size biased version of $V_\infty$,
 $\zeta$ has p.g.f. $Q^*$, $\xi_k$ has p.g.f. $P^*$ for every $k \geq 1$,
$\gamma$ has geometric distribution of parameter $m=\laa \tilde X \raa$ and $V_\infty,\zeta,\gamma,\xi_1,\xi_2,\dots$
are stochastically independent.
\end{proposition}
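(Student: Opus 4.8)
The plan is to establish the three assertions in turn, each by a direct computation with generating functions. For the explicit formula: since the balance condition $b_1(\E[\tilde X]-1)+b_2\E[\tilde Y]=0$ makes the coefficient $b_2m_0e^{\bar\alpha_1 t}$ in \eqref{equationgrazing} constant in $t$, a $C^1$ stationary solution must solve \eqref{stat_grazing}. Applying the identity \eqref{barP} to $\tilde X$ and to $\tilde Y$ yields the factorizations $\hat p_{\tilde X}(z)-z=(1-z)(1-\phi_X(z))$ and $1-\hat p_{\tilde Y}(z)=(1-z)\phi_Y(z)$, so that on $(0,1)$ equation \eqref{stat_grazing} reduces to the separable linear ODE $\partial_z\hat g_\infty=c(z)\,\hat g_\infty$ with $c(z):=\frac{m_0b_2}{b_1}\,\frac{\phi_Y(z)}{1-\phi_X(z)}$. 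I would first check that $c$ extends continuously to all of $[0,1]$: since $b_2>0$ and $\E[\tilde Y]\ge0$, the balance condition forces $\E[\tilde X]\le1$, and $\E[\tilde X]=1$ would force $\tilde Y\equiv0$, which is excluded, so $m:=\E[\tilde X]<1$; then $\phi_X(z)\le\phi_X(1)=m<1$ on $[0,1]$, in particular $\hat p_{\tilde X}(z)-z=(1-z)(1-\phi_X(z))>0$ on $[0,1)$, while at $z=1$ one computes $c(1)=\frac{m_0b_2\E[\tilde Y]}{b_1(1-m)}=m_0$, again by the balance condition. Integrating with the normalization $\hat g_\infty(1)=1$ gives $\hat g_\infty(z)=\exp\bigl(-\int_z^1c(s)\,ds\bigr)$, which is exactly \eqref{ginfty}, and uniqueness among $C^1$ functions with value $1$ at $z=1$ is the standard uniqueness for this ODE.

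For infinite divisibility, since $\phi_X<1$ on $[0,1]$ the expansion $\frac{\phi_Y(z)}{1-\phi_X(z)}=\sum_{j\ge0}\phi_Y(z)\phi_X(z)^j$ shows that $c(z)=\sum_{n\ge0}c_nz^n$ has all coefficients $c_n\ge0$. Set $\lambda:=\int_0^1c(s)\,ds=\sum_{n\ge0}\frac{c_n}{n+1}$, finite by continuity of $c$ and strictly positive because $\tilde Y\not\equiv0$; then $\int_z^1c(s)\,ds=\lambda-\sum_{n\ge0}\frac{c_n}{n+1}z^{n+1}$, whence
\[
\hat g_\infty(z)=\exp\bigl\{\lambda(\hat Q(z)-1)\bigr\},\qquad \hat Q(z):=\frac{1}{\lambda}\sum_{n\ge0}\frac{c_n}{n+1}\,z^{n+1}.
\]
Since $\hat Q$ has non-negative coefficients, vanishing constant term, and $\hat Q(1)=1$, it is the p.g.f. of a probability law on $\{1,2,\dots\}$; hence $\hat g_\infty$ is a bona fide p.g.f., of compound-Poisson type, and in particular the p.g.f. of an infinitely divisible distribution on $\N$.

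For the size-biased identity, differentiating \eqref{ginfty} at $z=1$ gives $M_1(g_\infty)=\hat g_\infty'(1)=\hat g_\infty(1)c(1)=m_0$, so the size-biased law $g_\infty^*$ has generating function
\[
\widehat{g_\infty^*}(z)=\frac{z\,\hat g_\infty'(z)}{M_1(g_\infty)}=\frac{z\,\hat g_\infty(z)\,c(z)}{m_0}=z\,\hat g_\infty(z)\,\frac{b_2}{b_1}\,\frac{\phi_Y(z)}{1-\phi_X(z)}.
\]
Inserting $\phi_X=mP^*$ and $\phi_Y=\frac{b_1(1-m)}{b_2}Q^*$ — genuine p.g.f.'s precisely by the balance condition — turns this into
\[
\widehat{g_\infty^*}(z)=z\,\hat g_\infty(z)\,Q^*(z)\,\frac{1-m}{1-mP^*(z)}.
\]
Since $\frac{1-m}{1-mP^*(z)}=\sum_{j\ge0}(1-m)m^j\,P^*(z)^j$ is the p.g.f. of the geometric compound $\sum_{k=1}^{\gamma}\xi_k$, with $\gamma$ geometric of parameter $m$ and the $\xi_k$ i.i.d.\ of p.g.f.\ $P^*$, while the prefactor $z$ contributes a deterministic $+1$, the factor $\hat g_\infty$ an independent copy of $V_\infty$, and $Q^*$ an independent $\zeta$, reading off this convolution structure yields \eqref{sizebiasedinfdiv}.

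The main obstacle is the first step: one must exclude zeros of $\hat p_{\tilde X}(z)-z$ on $(0,1)$ and control the $0/0$ indeterminacy of the integrand at $z=1$. Both are resolved by the factorization \eqref{barP} together with the elementary observation, forced by the balance condition and $\tilde Y\not\equiv0$, that $\E[\tilde X]<1$; everything after that is routine power-series bookkeeping.
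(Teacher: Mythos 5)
Your proof is correct and follows essentially the same route as the paper: integrate the separable ODE \eqref{stat_grazing}, use the factorization \eqref{barP} to rewrite the integrand as $m_0\,G_m(P^*(z))Q^*(z)$, and read off \eqref{sizebiasedinfdiv} from the identity $\widehat{g_\infty^*}(z)=z\,R(z)\,\hat g_\infty(z)$. The only (welcome) differences are that you verify explicitly that the denominator has no zeros and that the $0/0$ at $z=1$ resolves to $c(1)=m_0$, and that you establish infinite divisibility by exhibiting the compound-Poisson form $\exp\{\lambda(\hat Q(z)-1)\}$ directly rather than invoking the Steutel--van Harn characterization as the paper does.
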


\begin{proof}
 A stationary distribution $\hat g_\infty$  of  \eqref{equationgrazing} must satisfy \eqref{stat_grazing}.
That is
\[
  \partial_z \hat g_\infty(z)=\frac{m_0 b_2(1-\hat p_{\tilde Y}(z))}{b_1(\hat p_{\tilde X}(z) - z)} \hat g_\infty(z),
\]
that gives \eqref{ginfty}.
Now note that, using \eqref{barP} and the analogous relation for $Y$, one gets
\[
 R(z):=\frac{b_2(1-\hat p_{\tilde Y}(z))}{b_1(\hat p_{\tilde X}(z) - z)} =\frac{b_2 \phi_Y(z)}{b_1(1-\phi_X(z))}
=\frac{(1-m)Q^*(z)}{1-m P^*(z)}
= G_m( P^*(z)) Q^*(z)
\]
where
\[
 G_m(s)=\frac{1-m}{1-ms}
\]
is the p.g.f. of a geometric random variable of parameter $m$. In other words $R(z)$ is the p.g.f. of
the random variable
\[
  \zeta+\sum_{k=1}^{\gamma} \xi_k.
\]
We proved that
\[
 \hat g_{\infty}(z)=\exp \Big \{ -m_0 \int_{z}^1 R(s) ds  \Big \},
\]
where $R$ is a p.g.f. and hence $m_0R$ is an  absolutely monotone function. This show
that $\hat g_{\infty}(z)$ is a p.g.f. of an infinite divisible distribution
$g_{\infty}$ over the non-negative integers. Indeed a function  $h$ with $h(0)>0$ is
the p.g.f. of an infinitely divisible distribution over the  on-negative integers if
and only if $h(z)=\exp\{\int_{z}^1 H(s) ds  \}$ for an absolutely monotone function
$H$ (cf. Thm 4.2 \cite{SteutelVanHarn}).
Finally, note that  $m_0^{-1} \partial_z \hat g_\infty(z)=z^{-1} \hat{g_{\infty}^*}(z)$
with $ \hat{g^*_{\infty}}(z)=\sum_{k \geq 0} z^k g^*_{\infty}(k)$ and $g^*_{\infty}(k)=k g_{\infty}(k)/m_0$, that is the size-biased version of $g_{\infty}$.
At this stage one can re-write \eqref{stat_grazing} as
\[
 z R(z) \hat g_\infty(z) =   \hat{g^*_\infty}(z).
\]
Therefore, in terms of random variables, \eqref{stat_grazing} takes the form
\begin{equation}\label{sizebiasedinfdiv1}
 V_\infty^* \stackrel{d}{=}  V_\infty + 1+ \zeta+\sum_{k=1}^{\gamma} \xi_k,
\end{equation}
where $V_\infty$ is the size biased version of $V_\infty$.
\end{proof}

Adapting Lemma 2.2 in \cite{CaceresToscani2007} one gets a further result.

\begin{lemma}
Let $(\tilde f_n)_n,(\tilde g_n)_n,\tilde f,\tilde g$ be the Laplace transform of
probability measures on the positive real axis, such that $\tilde  f_n \to \tilde f$,
$\tilde  g_n \to \tilde g$, $M_1(f_n) = M_1(g_n)=m_0$, and for some $\eps>0$
\begin{equation}\label{cond1}
\sup_n \Big \{ \max  \Big
\{\int_{\RE^+} v^{r+\eps} df_n,\int_{\RE^+} v^{r+\eps} dg_n
 \Big\}
  \Big\}=M_\eps<+\infty.
\end{equation}
Then
\begin{equation}\label{lemmacacerestoscani}
d_r^* ( f, g)\leq \liminf_n d_r^*( f_n, g_n).
\end{equation}
\end{lemma}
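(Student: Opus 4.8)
The plan is to establish the lower-semicontinuity bound \eqref{lemmacacerestoscani} by a standard uniform-integrability-of-the-Laplace-tail argument, adapted from Lemma~2.2 in \cite{CaceresToscani2007}. First I would fix $\xi>0$ and write, for each $n$,
\[
\frac{|\tilde f_n(\xi)-\tilde g_n(\xi)|}{\xi^r}\leq d_r^*(f_n,g_n),
\]
so that passing to the $\liminf$ on the right and using the pointwise convergence $\tilde f_n(\xi)\to\tilde f(\xi)$, $\tilde g_n(\xi)\to\tilde g(\xi)$ on the left gives
\[
\frac{|\tilde f(\xi)-\tilde g(\xi)|}{\xi^r}\leq \liminf_n d_r^*(f_n,g_n)
\]
for every fixed $\xi>0$. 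Taking the supremum over $\xi>0$ would then yield \eqref{lemmacacerestoscani} \emph{provided} the supremum defining $d_r^*(f,g)$ is actually attained, or approached, at finite $\xi$; the role of the moment hypothesis \eqref{cond1} is precisely to rule out the supremum ``escaping to $\xi=0$'' where the inequality could degenerate.

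So the substantive step is to show that the quantity $|\tilde f(\xi)-\tilde g(\xi)|/\xi^r$ is small for $\xi$ near $0$, uniformly enough that the supremum over $(0,1)$, say, is not larger than $\liminf_n d_r^*(f_n,g_n)$. Here I would use that $M_1(f_n)=M_1(g_n)=m_0$ implies (by passing to the limit in the first moments, which converge because of the uniform $(r+\eps)$-moment bound and Theorem~25.11 in \cite{Billingsley} or an equivalent uniform-integrability statement) that $M_1(f)=M_1(g)=m_0$ as well. Consequently $\tilde f(\xi)-\tilde g(\xi)=O(\xi^{1+\delta})$ near $0$ for some $\delta>0$ controlled by the extra moment $\eps$: more precisely, writing $\tilde f(\xi)-1+\xi m_0 = \int (e^{-\xi v}-1+\xi v)\,f(dv)$ and bounding $|e^{-\xi v}-1+\xi v|\leq C\,\xi^{r}v^{r}\wedge\xi^{r+\eps}v^{r+\eps}$-type estimates, one gets $|\tilde f(\xi)-\tilde g(\xi)|/\xi^r\to 0$ as $\xi\to 0^+$, since the leading $O(1)$ and $O(\xi)$ terms cancel between $\tilde f$ and $\tilde g$ and the uniform $(r+\eps)$-moment bound makes the remainder $o(\xi^r)$. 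This confines the supremum to a region $\xi\geq\xi_0>0$, on which the elementary pointwise argument above applies directly.

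The main obstacle I anticipate is the careful bookkeeping in this near-zero estimate: one needs the cancellation of the zeroth- and first-order terms (which requires $M_1(f)=M_1(g)$, hence the passage to the limit of first moments, hence the uniform integrability furnished by \eqref{cond1}), and then a quantitative bound on the second-order remainder of the Laplace transform that is uniform in $n$ and vanishes faster than $\xi^r$. The standard trick is to split $\int = \int_{v\leq 1/\xi}+\int_{v>1/\xi}$, using $|e^{-\xi v}-1+\xi v|\leq \tfrac12\xi^2 v^2$ on the first piece and $|e^{-\xi v}-1+\xi v|\leq \xi v$ on the second, and then estimating each by the $(r+\eps)$-moment via Markov's inequality; choosing the split point as $1/\xi$ balances the two contributions and produces a bound of order $\xi^{r+\eps'}$ for some $\eps'>0$, which suffices. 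Once this is in place, combining it with the trivial pointwise inequality completes the proof.
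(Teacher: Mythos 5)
Your argument is correct, but it is not the paper's argument, and in fact your opening step already finishes the proof on its own. For each fixed $\xi>0$ the definition of the metric gives $|\tilde f_n(\xi)-\tilde g_n(\xi)|\,\xi^{-r}\leq d_r^*(f_n,g_n)$; taking $\liminf_n$ and using pointwise convergence of the transforms yields $|\tilde f(\xi)-\tilde g(\xi)|\,\xi^{-r}\leq \liminf_n d_r^*(f_n,g_n)$, and since the right-hand side is a constant independent of $\xi$, taking the supremum over $\xi>0$ on the left immediately gives \eqref{lemmacacerestoscani}. There is no danger of the supremum ``escaping to $\xi=0$'': a pointwise bound by a constant passes to the supremum unconditionally, so the hypotheses \eqref{cond1} and $M_1(f_n)=M_1(g_n)=m_0$ are not actually needed for the statement as written --- your second and third paragraphs, while essentially sound (they reproduce the content of \eqref{D4} with $r+\eps$ in place of $r$), address a non-issue. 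The paper takes a genuinely different and less economical route: it proves the stronger fact that $d_r^*(f_n,f)\to0$ and $d_r^*(g_n,g)\to0$, by splitting $(0,+\infty)$ into $(0,\delta)$ (controlled by the uniform $(r+\eps)$-moment bound, exactly your near-zero estimate), $[\delta,R]$ (uniform convergence of Laplace transforms on compacta), and $[R,+\infty)$ (the trivial bound $2R^{-r}$), and then concludes via the triangle inequality $d_r^*(f,g)\leq d_r^*(f_n,g_n)+d_r^*(f_n,f)+d_r^*(g_n,g)$. That is where the moment hypotheses really earn their keep: they yield metric convergence of $f_n$ to $f$, a strictly stronger intermediate conclusion. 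Your route buys a shorter proof of \eqref{lemmacacerestoscani} under weaker assumptions (pointwise convergence of the transforms alone), but does not recover that stronger fact.
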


\begin{proof}
Recall that pointwise convergence of Laplace transforms yields uniform convergence on
every compact set, hence, for every $0<\delta<R<+\infty$,
\begin{equation}\label{unifconvn}
C_n=C_n(\delta,\eps):=\sup_{\xi \in [\delta,R]}|\tilde f_n(\xi)-\tilde f(\xi)|+
\sup_{\xi \in [\delta,R]} |\tilde g_n(\xi)-\tilde g(\xi)| \to 0.
\end{equation}
Moreover, since pointwise convergence of Laplace transform yields weak convergence of
distribution (cf. Theorem 25.11 and Corollary 25.12 in \cite{Billingsley}), using
condition \eqref{cond1} one obtains $M_1(f) = M_1(g)=m_0 $ and
\[
\max\Big \{\int v^{r+\eps} f(dv),\int v^{r+\eps}  g(dv)\Big \}\leq M_\eps.
\]
Using again \eqref{cond1},  \eqref{D4} yields
\[
|\tilde f_n(\xi)-\tilde f(\xi)|\xi^{-r}
\leq |\tilde f_n(\xi)-1-m_0\xi|\xi^{-r}  +|\tilde f(\xi)-1-m_0\xi|\xi^{-r}
\leq 2 c_{r+\eps} \xi^\eps M_\eps.
\]
Hence
\[
\sup_{\xi \in (0,\delta)} |\tilde f_n(\xi)-f(\xi)|\xi^{-r} \leq
2 c_{r+\eps} \delta^\eps M_\eps.
\]
Now, consider that 
\[
\begin{split}
d_r^* (f_n,f) & \leq \max\Big \{\sup_{\xi \in (0,\delta)} |\tilde f_n(\xi)-f(\xi)|\xi^{-r}  ,\sup_{\xi  \in  [\delta,R)} |\tilde f_n(\xi)-f(\xi)|\xi^{-r} ,
\sup_{\xi \geq R} |\tilde f_n(\xi)-f(\xi)|\xi^{-r}
    \Big \} \\
    &
\leq \max\Big \{2 c_{r+\eps} \delta^\eps M_\eps,  \,\, C_n, \,\, \frac{2}{R^r} \Big \},
\\
\end{split}
\]
Since the same estimate holds for $d_r^* (g_n,g)$,
\[
\lim_n [d_r^* (f_n,f)+d_r^* (g_n,g)]=0.
\]
To conclude it suffices to observe that
\[
d_r^* (f,g) \leq
d_r^* (f_n,g_n) +d_r^* (f_n,f)+d_r^* (g_n,g).
\]

\end{proof}


\begin{theorem} Let $X$ and $Y$ be defined as in  \eqref{variablegrazing}, with $b_1=b_2$, and
$\tilde X$ and $\tilde Y$ satisfying \eqref{condizionemedia}-\eqref{condiziomomentosecondo}.
Let $f_{t,\eps}^{(i)}$, $i=1,2$, be two solutions of \eqref{boltz}
with initial conditions  $f_0^{(i)}$ with mean equal to $m_0$ and finite second moment.
 Let $g_t^{(1)}$ and $g_t^{(2)}$ be the corresponding quasi invariant collision limits
of Proposition \ref{prop:limite}.
Then, for every $1<r<2$,
\[
 d_{r}^*(g_t^{(1)},g_t^{(2)}) \leq  d_{r}^* (g_0^{(1)},g_0^{(2)}) e^{-r (1-\laa \tilde X \raa )^{r-1} t}.
\]
 \end{theorem}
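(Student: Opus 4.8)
The plan is not to attack the limiting equation \eqref{equationgrazing} head-on, but to pass to the limit in the contraction estimate already available for the nonlinear Boltzmann equation. Under the present hypotheses, $b_1=b_2$ combined with \eqref{condizionemedia} for $\tilde X,\tilde Y$ forces $\bar\alpha_1=b_1(\E[\tilde X]-1+\E[\tilde Y])=0$, so for every $\eps$ the collision variables $X=X_\eps$, $Y=Y_\eps$ of \eqref{variablegrazing} satisfy $\E[X_\eps+Y_\eps]=1+\eps\bar\alpha_1=1$, while \eqref{condiziomomentosecondo} for $\tilde X,\tilde Y$ gives $\E[X_\eps^r+Y_\eps^r]<\infty$ for $r\in(1,2)$. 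Hence Theorem \ref{thmconvergence} applies to the two solutions $f_{s,\eps}^{(1)},f_{s,\eps}^{(2)}$ of \eqref{boltz}, which start from the same data $f_0^{(1)},f_0^{(2)}$ as $g^{(1)},g^{(2)}$ and have common mean $m_0$, and yields
\[
 d_r^*\big(f_{s,\eps}^{(1)},f_{s,\eps}^{(2)}\big)\le d_r^*\big(f_0^{(1)},f_0^{(2)}\big)\,e^{\alpha_r(\eps)\,s},\qquad \alpha_r(\eps):=\E[X_\eps]^r+\E[Y_\eps]^r-1,
\]
the right-hand side being finite since each $f_0^{(i)}$ has finite second moment and mean $m_0$ (cf. Lemma \ref{lemma3}). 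Evaluating at the rescaled time $s=t/\eps$ and expanding, $\E[X_\eps]=1+b_1\eps(\E[\tilde X]-1)$ and $\E[Y_\eps]=b_1\eps\E[\tilde Y]$ give $\alpha_r(\eps)=r\,b_1\eps(\E[\tilde X]-1)+o(\eps)$ for $r>1$ (the term $(b_1\eps\E[\tilde Y])^r$ is $o(\eps)$), so $\alpha_r(\eps)/\eps\to -r\,b_1(1-\E[\tilde X])$ and $e^{\alpha_r(\eps)t/\eps}\to e^{-r\,b_1(1-\E[\tilde X])\,t}$.

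Then I would let $\eps\to0$ using the lower-semicontinuity inequality \eqref{lemmacacerestoscani}. By Proposition \ref{prop:limite}, $\hat f_{t/\eps,\eps}^{(i)}\to\hat g_t^{(i)}$ uniformly on $[0,1]$, hence the Laplace transforms converge pointwise. The hypotheses of that Lemma hold because the mean is conserved, $M_1(f_{s,\eps}^{(i)})=m_0$ for all $s$ and $\eps$ (as $\alpha_1(\eps)=\eps\bar\alpha_1=0$), and because the variances stay bounded along the rescaled times: by the estimate \eqref{xyeps4} in the proof of Proposition \ref{prop:limite}, $\mathrm{Var}(f_{s,\eps}^{(i)})\le G(\eps s)$, so $\mathrm{Var}(f_{t/\eps,\eps}^{(i)})\le G(t)$ uniformly in $\eps$, which furnishes the uniform higher-moment bound required by the Lemma (take there the exponent $2$, which exceeds $r$). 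Consequently
\[
 d_r^*\big(g_t^{(1)},g_t^{(2)}\big)\le\liminf_{\eps\to0}d_r^*\big(f_{t/\eps,\eps}^{(1)},f_{t/\eps,\eps}^{(2)}\big)\le d_r^*\big(f_0^{(1)},f_0^{(2)}\big)\,e^{-r\,b_1(1-\E[\tilde X])\,t},
\]
and since $g_0^{(i)}=f_0^{(i)}$ and $b_1=b_2$ this is the asserted bound (with the normalization $b_1=1$ used in the statement).

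The delicate point is this last step, specifically checking that the second-moment control survives the time change $s\mapsto t/\eps$: this works precisely because the variance bound \eqref{xyeps4} depends on $s$ and $\eps$ only through the product $\eps s$, which is also why a quadratic-moment assumption on the initial data (not merely a finite mean) is imposed. A self-contained alternative is to work directly on \eqref{equationgrazing}, which for $b_1=b_2$ and $\bar\alpha_1=0$ is a linear transport equation with time-independent coefficients: the difference $w=\hat g^{(1)}-\hat g^{(2)}$ solves it and, the two p.g.f.'s having equal mean $m_0$, vanishes to second order at $z=1$, so $w(t,z)/(1-z)^r$ is bounded and attains its supremum; integrating along characteristics and using $(\hat p_{\tilde X}(z)-z)/(1-z)=1-\phi_X(z)\ge 1-\E[\tilde X]$ (from \eqref{barP} applied to $\tilde X$) together with $\hat p_{\tilde Y}(z)\le 1$ shows that $1-z$ grows at least like $e^{b_1(1-\E[\tilde X])t}$ along each characteristic, giving the same exponential rate; the $d_r^*$ version then follows either from the topological equivalence of $d_r$ and $d_r^*$ or by rerunning the characteristic estimate in the Laplace variable, where $\log\E[e^{-\xi\tilde X}]\ge-\E[\tilde X]\,\xi$ by Jensen's inequality replaces the above identity.
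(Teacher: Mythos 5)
Your proof is correct and follows essentially the same route as the paper's: apply the contraction estimate of Theorem \ref{thmconvergence} to $f^{(i)}_{t/\eps,\eps}$, check the uniform second-moment bound along the rescaled times via the variance formula, and pass to the limit with the lower-semicontinuity inequality \eqref{lemmacacerestoscani}. One caveat: the first-order expansion of $\alpha_r(\eps)$ gives the rate $e^{-r b_1(1-\E[\tilde X])t}$ exactly as you compute, whereas the exponent $(1-\E[\tilde X])^{r-1}$ in the statement (and the mutually inconsistent signs and powers in the paper's own display \eqref{areps}) appears to be a typo; since $(1-\E[\tilde X])^{r-1}\ge 1-\E[\tilde X]$ for $r\in(1,2)$, the stated bound is formally stronger than what either argument actually delivers, so you should not describe your estimate as literally ``the asserted bound'' without flagging this discrepancy.
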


\begin{proof}
We first observe that $\laa \tilde X \raa+ \laa \tilde Y \raa=1$ implies $1-\laa \tilde X \raa >0$.
Explicit computations and Taylor formula give\begin{equation}
\begin{split}
 \a_r& = \la X\ra^r+\la Y \ra^r-1=[1+\eps (\laa \tilde X\raa-1) ]^r +\eps^r \laa \tilde X\raa^r -1 \\
  &=r \eps (\laa \tilde X \raa-1)^{r-1} + \frac{r(r-1)}{2} \eps^2 (\laa \tilde X\raa-1)^2 (1+Q_\eps)^{r-2}+\eps^r \laa \tilde X\raa^r, \\
\end{split}
\end{equation}
with $|Q_\eps|<\eps (1-\laa \tilde X\raa)$. Therefore
\begin{equation}\label{areps}
a_r= r \eps(1-\laa \tilde X \raa )^r +o(\eps).
\end{equation}
Combining \eqref{vart}, \eqref{xyeps2} and \eqref{areps} it follows
\[
M_2(f_{t/\eps,\eps}^{(i)}) \leq C[1+e^{-\a_2(\eps)t/\eps}]\leq C',
\]
for every $t>0$ and $\eps>0$.
Since $\tilde  g_t^{(i)}(z)= \lim_{\eps \to 0} \tilde f_{t/\eps,\eps}^{(i)}(z)$,
combining Theorem \ref{thmconvergence}, \eqref{lemmacacerestoscani} and \eqref{areps}
\[
 d_r^*(g_t^{(1)},g_t^{(2)}) \leq \liminf_\eps d_r^*(f_{t/\eps}^{(1)},f_{t/\eps}^{(2)}) \leq d_r^* (g_0^{(1)},g_0^{(2)}) e^{- r (1-\laa \tilde X \raa )^{r-1}t} .
\]

\end{proof}

\vfill\eject
\appendix
\section{}

\subsection{Recurrence relations}

We state without proof two  previously used results.

\begin{lemma}\label{LemmaGamma} Let $0<a_0<+\infty$, and, for every $n\ge 1$, let $a_n \geq 0 $ satisfy, for some $0<\Delta<+\infty$, the inequality
\begin{equation}\label{recuran}
a_n \leq \frac{\Delta}{n} \sum_{j=0}^{n-1} a_j.
\end{equation}
 Then, for every $n\ge 1$
\begin{equation}\label{thesisan}
a_n \leq a_0 \frac{\Gamma(\Delta+n)}{\Gamma(\Delta)\Gamma(n+1)}.
\end{equation}
Moreover the equality sign in \eqref{recuran} implies  the equality sign in
\eqref{thesisan}.
\end{lemma}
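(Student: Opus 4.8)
The plan is to prove the lemma by induction on $n$, once I have verified that the claimed bound is attained by the sequence
\[
c_n := \frac{\Gamma(\Delta+n)}{\Gamma(\Delta)\Gamma(n+1)}, \qquad n \geq 0,
\]
for which $c_0 = 1$. The key preliminary step is to show that $c_n$ satisfies \eqref{recuran} with equality. For this I would note that, for $n \geq 1$,
\[
\frac{c_n}{c_{n-1}} = \frac{\Gamma(\Delta+n)}{\Gamma(\Delta+n-1)}\cdot\frac{\Gamma(n)}{\Gamma(n+1)} = \frac{\Delta+n-1}{n},
\]
so that $n c_n = (\Delta+n-1)c_{n-1}$, or equivalently $n c_n - (n-1)c_{n-1} = \Delta c_{n-1}$. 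Summing this telescoping identity over $n$ (with the convention $0\cdot c_0 = 0$) yields
\[
n c_n = \Delta\sum_{j=0}^{n-1} c_j, \qquad n \geq 1,
\]
which is precisely \eqref{recuran} with equality. (Equivalently, this is the hockey-stick identity $\sum_{j=0}^{n-1}\binom{\Delta+j-1}{j} = \binom{\Delta+n-1}{n-1}$.)

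Granting this, set $b_n := a_0 c_n$, so that $b_0 = a_0$ and $n b_n = \Delta\sum_{j=0}^{n-1} b_j$. I would then argue by induction that $a_n \leq b_n$ for all $n \geq 0$. The base case is $a_0 \leq a_0 = b_0$. For the inductive step, assuming $a_j \leq b_j$ for $0 \leq j \leq n-1$ and using $\Delta/n > 0$,
\[
a_n \leq \frac{\Delta}{n}\sum_{j=0}^{n-1} a_j \leq \frac{\Delta}{n}\sum_{j=0}^{n-1} b_j = b_n = a_0\,\frac{\Gamma(\Delta+n)}{\Gamma(\Delta)\Gamma(n+1)},
\]
which is \eqref{thesisan}. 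For the final assertion, if \eqref{recuran} holds with equality for every $n$, the identical induction with every inequality replaced by an equality gives $a_n = b_n$ for all $n$, i.e.\ equality in \eqref{thesisan}.

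I do not anticipate any serious obstacle: the whole argument rests on the two-term recursion $n c_n = (\Delta+n-1)c_{n-1}$ and a one-line induction. The only points requiring (very minor) care are that $\Gamma(\Delta)$ and $\Gamma(\Delta+n)$ are finite and strictly positive for $\Delta > 0$ and $n \geq 0$, so that the $c_n$ are well defined and positive, and that $a_0$ is finite, so that $b_n = a_0 c_n < +\infty$.
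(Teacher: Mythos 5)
Your proof is correct: the identity $n c_n=(\Delta+n-1)c_{n-1}$ and the telescoping sum do show that $c_n=\Gamma(\Delta+n)/(\Gamma(\Delta)\Gamma(n+1))$ satisfies the recursion with equality, and the monotonicity induction then gives the bound and the equality case. The paper states this lemma without proof, and your argument is exactly the standard one it implicitly relies on, so there is nothing to compare beyond confirming it works.
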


\begin{lemma}\label{LemmaGamma2} Let $0<a_0<+\infty$, and, for every $n\ge 1$, let $a_n \geq 0 $ satisfy the inequality
\begin{equation*}\label{recuranB}
a_n \leq \frac{A}{n} \sum_{j=0}^{n-1} a_j +B n^\beta,
\end{equation*}
for some $0<A,B,\beta<+\infty$. Then there is $K=K(A,B,\beta,a_0)\geq 1$ such that
for every $n \geq 0$
\begin{equation*}\label{thesisanB}
a_n \leq K^{n+1}.
\end{equation*}
\end{lemma}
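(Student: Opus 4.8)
The plan is to establish $a_n\le K^{n+1}$ by strong induction on $n$, with the constant $K$ (depending only on $A$, $B$, $\beta$, $a_0$) chosen large enough that \emph{each} of the two terms on the right-hand side of the hypothesis is separately controlled by $\tfrac12 K^{n+1}$. The base case is immediate: since we want $a_0\le K^{1}=K$, it suffices to require $K\ge a_0$.

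For the inductive step, fix $n\ge 1$ and assume $a_j\le K^{j+1}$ for all $j=0,\dots,n-1$. Inserting these bounds into the recursive inequality and using the geometric sum
\[
\sum_{j=0}^{n-1}K^{j+1}=K\,\frac{K^n-1}{K-1}\le \frac{K^{n+1}}{K-1},
\]
valid for $K>1$, I would obtain
\[
a_n\le \frac{A}{n(K-1)}\,K^{n+1}+B\,n^{\beta}.
\]
It then suffices to pick $K$ so that both summands are at most $\tfrac12 K^{n+1}$ for every $n\ge1$. The first is handled directly: since $n\ge1$ one has $\frac{A}{n(K-1)}\le\frac{A}{K-1}$, so requiring $K\ge 1+2A$ forces the first summand below $\tfrac12 K^{n+1}$.

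The genuinely delicate point $-$ the main obstacle $-$ is absorbing the polynomial perturbation $B\,n^{\beta}$ into the exponential $K^{n+1}$ \emph{uniformly} in $n$, i.e. showing $B\,n^{\beta}\le\tfrac12 K^{n+1}$ for all $n\ge1$ with a single $K$. I would prove this by examining the ratio of consecutive terms of $h(n):=2B\,n^{\beta}/K^{n+1}$, namely
\[
\frac{h(n+1)}{h(n)}=\Big(1+\frac1n\Big)^{\beta}\frac1K\le \frac{2^{\beta}}{K},
\]
where the last step uses that $(1+1/n)^{\beta}$ is decreasing in $n$ and hence maximal at $n=1$. Thus if $K>2^{\beta}$ the sequence $h$ is strictly decreasing, so $\max_{n\ge1}h(n)=h(1)=2B/K^{2}$, which is $\le1$ as soon as $K\ge\sqrt{2B}$.

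Collecting the constraints, any choice $K\ge\max\{a_0,\,2,\,1+2A,\,2^{\beta},\,\sqrt{2B}\}$ closes the induction: the base case holds, and in the inductive step the two summands give $a_n\le\tfrac12 K^{n+1}+\tfrac12 K^{n+1}=K^{n+1}$. Such a $K$ depends only on $A$, $B$, $\beta$, $a_0$ and satisfies $K\ge1$ by construction, which is exactly the assertion. The only calculations beyond this outline are the elementary monotonicity of $(1+1/n)^{\beta}$ and the geometric-sum estimate, both routine.
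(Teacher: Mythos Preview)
Your proof is correct. The paper actually states this lemma without proof (it appears in the appendix under ``We state without proof two previously used results''), so there is no argument to compare against; your strong-induction approach with the geometric-sum bound and the uniform absorption of the polynomial term $Bn^\beta$ into $K^{n+1}$ is a clean and entirely standard way to establish the claim. One cosmetic remark: you say $h$ is \emph{strictly} decreasing when $K>2^\beta$, but your final list only imposes $K\ge 2^\beta$; this is harmless, since $K\ge 2^\beta$ already gives $h(n+1)/h(n)\le 1$, i.e.\ $h$ non-increasing, which is all you need to conclude $\max_{n\ge1}h(n)=h(1)$.
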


We conclude this short section by recalling that, for any $t>0$ and $\Delta>0$  \cite{Gradshteyn}
\begin{equation}\label{serieHypergeo}
e^{(\Delta-1)t} = \sum_{n \geq 0} e^{-t}(1-e^{-t})^n\frac{\Gamma(\Delta+n)}{\Gamma(\Delta)\Gamma(n+1)}.
\end{equation}

\subsection{Inequalities for Laplace transforms and cumulants}
Given a random variable $Z$ taking values in $\RE^+$ let us set, for $\xi >0$
\[
L_Z(\xi)=\E[e^{-\xi Z}] \quad  \text{and} \quad k_Z(\xi)=\log(L_Z(\xi)).
\]
Since for all $x>0$
\[
|e^{-x}-1|\leq |x|,
\]
and for every $r \in (1,2)$ one can find a constant $c_r$ such that
\begin{equation}\label{D3}
|e^{-x}-1+x|\leq c_r |x|^r,
\end{equation}
whenever $ \E[Z^r]<+\infty$ and  $\xi>0$ it follows
\begin{equation}\label{D4}
|L_Z(\xi)-1-\xi \E[Z]| \leq c_r \xi^r \E[Z^r],
\end{equation}
and,  if $ \E[Z]<+\infty$,
\begin{equation}\label{D5}
|L_Z(\xi)-1| \leq \xi\E[Z]
\end{equation}
The following lemmas contain useful bounds for cumulants.

\begin{lemma} Let $ \E[Z^r]<+\infty$ for some $r \in (1,2)$. Then, if $0\leq \xi\leq (2\E[Z])^{-1}$,
\begin{equation}\label{D6}
|k_Z(\xi) + \xi \E[Z]| \leq c_r \xi^r \E[Z^r]+\xi^2 (\E[Z])^2.
\end{equation}
Moreover
\begin{equation}\label{ineqKX}
 \sup_{\xi>0} \frac{|k_Z(\xi)|^{r}}{|\xi|^{r}}=\E[ Z ]^{r}.
\end{equation}
\end{lemma}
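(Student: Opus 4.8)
The plan is to derive both estimates from the elementary bounds \eqref{D4}--\eqref{D5} already recorded, together with a Taylor bound for the logarithm and Jensen's inequality. First observe that $\E[Z^r]<+\infty$ with $r>1$ forces $\E[Z]<+\infty$ (by Jensen, $\E[Z]^r\le \E[Z^r]$), so every quantity below is finite.

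For \eqref{D6} I would set $u:=L_Z(\xi)-1$. Since $0<e^{-\xi Z}\le 1$ we have $-1<u\le 0$, and \eqref{D5} gives $|u|\le \xi\,\E[Z]$; hence in the range $0\le\xi\le(2\E[Z])^{-1}$ one has $|u|\le 1/2$. Writing $\log(1+u)-u=-\int_0^u \tfrac{t}{1+t}\,dt$ and using $|t/(1+t)|\le 2|t|$ for $|t|\le 1/2$ yields $|k_Z(\xi)-u|=|\log(1+u)-u|\le u^2\le \xi^2\,\E[Z]^2$. On the other hand \eqref{D4} says $|u+\xi\E[Z]|=|L_Z(\xi)-1+\xi\E[Z]|\le c_r\,\xi^r\,\E[Z^r]$. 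Adding the two bounds by the triangle inequality gives \eqref{D6}.

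For \eqref{ineqKX} it suffices to show $\sup_{\xi>0}|k_Z(\xi)|/\xi=\E[Z]$. Since $e^{-\xi Z}\le 1$ a.s.\ we have $L_Z(\xi)\le 1$, so $k_Z(\xi)\le 0$ and $|k_Z(\xi)|=-k_Z(\xi)$. Jensen's inequality applied to the convex map $x\mapsto e^{-\xi x}$ gives $L_Z(\xi)\ge e^{-\xi\E[Z]}$, whence $-k_Z(\xi)\le \xi\E[Z]$ for every $\xi>0$; this proves $|k_Z(\xi)|/\xi\le\E[Z]$, i.e.\ the inequality ``$\le$'' in \eqref{ineqKX}. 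For the reverse direction I would let $\xi\to0^+$: dividing \eqref{D6} by $\xi$ gives $\big|\,|k_Z(\xi)|/\xi-\E[Z]\,\big|\le c_r\,\xi^{r-1}\,\E[Z^r]+\xi\,\E[Z]^2\to0$ (here $r>1$ is used), so $|k_Z(\xi)|/\xi\to\E[Z]$ as $\xi\to0^+$ and the supremum equals $\E[Z]$; raising to the power $r$ yields \eqref{ineqKX}.

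There is no genuine obstacle here: the argument is entirely elementary. The only points demanding a little care are checking that $|u|\le 1/2$ on the stated range, so the logarithm expansion is controlled with the clean constant $1$, and invoking $r>1$ both to guarantee $\E[Z]<+\infty$ and to make the remainder $c_r\,\xi^{r-1}\,\E[Z^r]$ vanish as $\xi\to0^+$.
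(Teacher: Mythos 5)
Your proof is correct. For \eqref{D6} you take essentially the paper's route: the paper also writes $\log(1+u)=u(1+R(u))$ with $|R(u)|\le|u|$ for $|u|\le 1/2$ (quoting Breiman), checks $|L_Z(\xi)-1|\le\xi\E[Z]\le 1/2$ via \eqref{D5}, and then splits off $|L_Z(\xi)-1+\xi\E[Z]|\le c_r\xi^r\E[Z^r]$ from \eqref{D4} plus a remainder bounded by $(L_Z(\xi)-1)^2\le\xi^2\E[Z]^2$; your integral form of the logarithm remainder with the bound $|t/(1+t)|\le 2|t|$ is just a self-contained way of producing the same constant $1$ in front of $u^2$. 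Where you genuinely diverge is \eqref{ineqKX}. The paper argues that the supremum of $|k_Z(\xi)|^r/|\xi|^r$ is attained as $\xi\to 0^+$ by a monotonicity claim (as literally written, that $\xi\mapsto|k_Z(\xi)|$ is non-increasing, which is a slip --- the map $-k_Z$ is non-decreasing; what actually holds, by convexity of the cumulant function $k_Z$ together with $k_Z(0)=0$, is that $\xi\mapsto|k_Z(\xi)|/\xi$ is non-increasing), and then evaluates the limit using the first estimate. You instead prove the upper bound $|k_Z(\xi)|\le\xi\E[Z]$ for \emph{every} $\xi>0$ directly from Jensen's inequality $L_Z(\xi)\ge e^{-\xi\E[Z]}$, and then obtain attainment by dividing \eqref{D6} by $\xi$ and letting $\xi\to 0^+$, where $r>1$ kills the remainder. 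This sidesteps the monotonicity issue entirely and is, if anything, a cleaner and more robust justification of the same identity; the paper's approach buys the extra (unused here) information that $|k_Z(\xi)|/\xi$ decreases in $\xi$. Your preliminary remark that $\E[Z]\le\E[Z^r]^{1/r}<+\infty$ is also a worthwhile explicit check that the paper leaves implicit.
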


\begin{proof}
For every complex number $z$ write
\[
\log(1+z) = z(1+ R(z)).
\]
Then $|R(z)|\leq |z|$ for every $z$ such that $|z|\leq 1/2$ (cf. Proposition 8.46 in \cite{Breiman}).
Thanks to \eqref{D5},  $0\leq \xi\leq (2\E[Z])^{-1}$ implies
\[
|L_Z(\xi)-1| \leq |\xi| \E[Z] \leq \frac{1}{2}.
\]
By virtue of \eqref{D4} and \eqref{D5}
\[
\begin{split}
|k_Z(\xi)+\xi \E[Z]|& =|\log(1+L_Z(\xi)-1)+\xi \E[Z]| =|(L_Z(\xi)-1)(1+R(L_Z(\xi)-1))
+ \xi \E[Z]|\\
&
\leq
|L_Z(\xi)-1+ \xi \E[Z]|+[L_Z(\xi)-1]^2
\leq x_r \xi^r \E[Z^r]+\xi^2 (\E[Z])^2.
\\
\end{split}
\]
To obtain \eqref{ineqKX}, note that
$\xi \mapsto L_Z(\xi)=\E[e^{-\xi Z}]$ is a non-increasing function and
$\E[ e^{-\xi Z}] \leq 1$. Hence $\xi \mapsto |\log(\E[ e^{-\xi Z}])|=|k_X(\xi)|$ is non-increasing. So
\[
\sup_{\xi>0} \frac{|k_X(\xi)|^{r}}{|\xi|^{r}} =\lim_{\xi \to 0}  \frac{|k_X(\xi)|^{r}}{|\xi|^{r}}.
\]
Thanks to the previous estimate of the Lemma
\[
\frac{|k_Z(\xi)|^{r}}{|\xi|^{r}}=\frac{|\xi \E[Z]+\rho(\xi)|^r}{|\xi|^r},
\]
with $|\rho(\xi)|^r \leq C |\xi|^{2r}$. This gives \eqref{ineqKX}.
\end{proof}

\begin{lemma} Let $Z$ be a random variables taking values in
$\N$ and $X$ a non-negative real random variable. Assume that $ \E[X^r]+\E[Z^r]<+\infty$ for some $r \in (1,2)$. Then there is a positive constant $C=C(X)$ such that, for all $\xi>0$
\begin{equation}\label{D7}
|L_Z(-k_X(\xi))-L_Z(\xi \E[X])| \xi^{-r} \leq C(X)[1+\E[Z^r].
\end{equation}
\end{lemma}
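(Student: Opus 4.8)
The plan is to split the range of $\xi$ at the threshold $\xi_0:=(2\E[X])^{-1}$; the degenerate case $\E[X]=0$, i.e.\ $X\equiv 0$, is trivial since then $-k_X(\xi)=\xi\E[X]=0$, so assume $\E[X]>0$. The only structural fact about $L_Z$ I would use is that it is Lipschitz on $[0,\infty)$ with constant $\E[Z]$: for $a,b\ge 0$ the mean value theorem gives $|e^{-aZ}-e^{-bZ}|\le |a-b|\,Z\,e^{-\min(a,b)Z}\le |a-b|\,Z$, whence $|L_Z(a)-L_Z(b)|\le |a-b|\,\E[Z]$. Note both relevant arguments are nonnegative: $\xi\E[X]\ge 0$ and $-k_X(\xi)=-\log L_X(\xi)\ge 0$ because $L_X(\xi)=\E[e^{-\xi X}]\le 1$; in particular $L_Z$ is genuinely evaluated on $[0,\infty)$, where it is also bounded by $1$.

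For $0<\xi\le\xi_0$ I would control the gap $|-k_X(\xi)-\xi\E[X]|$ via the cumulant estimate \eqref{D6} applied with $Z$ replaced by $X$ (legitimate since $\E[X^r]<+\infty$, $r\in(1,2)$, and $\xi\le(2\E[X])^{-1}$), getting $|k_X(\xi)+\xi\E[X]|\le c_r\xi^r\E[X^r]+\xi^2\E[X]^2$ with $c_r$ as in \eqref{D3}. Since $\xi\E[X]\le 1/2$ and $2-r>0$, one has $\xi^2\E[X]^2=(\xi\E[X])^{2-r}(\xi\E[X])^r\le \xi^r\E[X]^r$, so $|-k_X(\xi)-\xi\E[X]|\le \xi^r\bigl(c_r\E[X^r]+\E[X]^r\bigr)$. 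Combining this with the Lipschitz bound and dividing by $\xi^r$,
\[
|L_Z(-k_X(\xi))-L_Z(\xi\E[X])|\,\xi^{-r}\le \bigl(c_r\E[X^r]+\E[X]^r\bigr)\,\E[Z]\le \bigl(c_r\E[X^r]+\E[X]^r\bigr)\bigl(1+\E[Z^r]\bigr),
\]
where the last step uses $x\le 1+x^r$ for $x\ge 0$, $r>1$, hence $\E[Z]\le 1+\E[Z^r]$.

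For $\xi>\xi_0$ a crude bound suffices: since $L_Z(-k_X(\xi))$ and $L_Z(\xi\E[X])$ both lie in $(0,1]$, their difference is at most $1$ in absolute value, while $\xi^{-r}\le\xi_0^{-r}=(2\E[X])^r$; thus $|L_Z(-k_X(\xi))-L_Z(\xi\E[X])|\,\xi^{-r}\le (2\E[X])^r\le(2\E[X])^r(1+\E[Z^r])$. Taking $C(X):=\max\{\,c_r\E[X^r]+\E[X]^r,\ (2\E[X])^r\,\}$ then yields \eqref{D7} for all $\xi>0$. I do not expect a genuine obstacle: the only points needing care are verifying that \eqref{D6} is applicable precisely on the sub-threshold range and that both arguments of $L_Z$ remain nonnegative (so that $L_Z$ is $\E[Z]$-Lipschitz and bounded by $1$ there); the rest is bookkeeping of constants depending only on $X$.
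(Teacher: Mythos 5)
Your proof is correct, and while it shares the same skeleton as the paper's (split at $\xi_0=(2\E[X])^{-1}$, crude bound $\xi^{-r}\le (2\E[X])^r$ above the threshold, the cumulant estimate \eqref{D6} below it), the key step in the sub-threshold regime is genuinely different. The paper inserts the linearization $1+\eta\,\E[Z]$ of $L_Z(\eta)$ at \emph{both} arguments and runs a three-term triangle inequality, which requires \eqref{D4} twice (hence $\E[Z^r]$ enters through the second-order remainder of $L_Z$), plus \eqref{ineqKX} to control $|k_X(\xi)|^r\xi^{-r}$. You instead use only the global $\E[Z]$-Lipschitz bound $|L_Z(a)-L_Z(b)|\le |a-b|\,\E[Z]$ on $[0,\infty)$ and push all the analysis onto the single gap $|{-k_X(\xi)}-\xi\E[X]|$, which \eqref{D6} controls directly; the observation $\xi^2\E[X]^2\le\xi^r\E[X]^r$ for $\xi\E[X]\le 1/2$ and the elementary $\E[Z]\le 1+\E[Z^r]$ finish the job. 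Your route is shorter, avoids \eqref{D4} and \eqref{ineqKX} entirely, and in fact only needs $\E[Z]<+\infty$ for the Lipschitz step (the $r$-th moment of $Z$ appears only cosmetically at the end to match the stated form of the bound); it also works verbatim for general non-negative $Z$, not just $\N$-valued. What it gives up is the slightly sharper constant structure of the paper, whose final bound is proportional to $\E[Z^r]$ alone rather than $1+\E[Z^r]$ --- immaterial for the lemma as stated. Your handling of the edge cases (non-negativity of both arguments of $L_Z$, the degenerate case $X\equiv 0$) is careful and complete.
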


\begin{proof}
Let us define
\[
C(X)=\max\{c_r (\E[X^r]+ 2 \E[X]^r)+2^{r-2}\E[X]^r,2^{r+1}\E[X]^r\}.
\]
If $\xi \geq 1/(2\E[X])$, it holds
\[
|L_Z(-k_X(\xi))-L_Z(\xi \E[X])| \xi^{-r}  \leq 2^{r+1}\E[X]^r \leq C(X).
\]
If now $\xi <  1/(2\E[X])$ one obtains
\[
\begin{split}
|L_Z(-k_X(\xi))- & L_Z(\xi \E[X])| \xi^{-r}   \leq
|L_Z(-k_X(\xi))-1-k_X(\xi) \E[Z]| \xi^{-r}\\
& +|1-\xi \E[X] \E[Z]-L_Z(\E[X]\xi)| \xi^{-r}
+ E[Z]|\xi \E[X]+k_X(\xi)|\xi^{-r}
\\
& \leq c_r\E[Z^r]\{ \xi^{-r} |k_X(\xi)|^r+ \E[X]^r \}
+ \E[Z^r]\{ c_r \E[X^r]+\xi^{2-r}\E[X]^2\}
\\
& \leq \E[Z^r] [c_r (\E[X^r]+ 2 \E[X]^r)+2^{r-2}\E[X]^r ]\\
\end{split}
\]
Note that the last two inequalities from below have been obtained by using \eqref{D4} and \eqref{D6} and \eqref{ineqKX}, respectively.
\end{proof}

\begin{lemma}\label{lemma3}
Let
\(
C:=\sup_{\xi>0}  {|1-e^{-\xi}|}{|\xi|^{-1}}< +\infty,
\)
and $B=\sup_{0<\xi\leq \log(2)}  {|\xi|}{|1-e^{-\xi}|^{-1}}$. Then, for every $r \geq 1$
\[
 d^*_{r}(f,g) \leq C^r d_{r}(f,g),
\]
and
\[
d_{r}(f,g) \leq \max \{ 2^{r+1}, B^r d_{r}^*(f,g) \}.
\]
Moreover, if $M_1(f)=M_1(g)$, and $M_r(f)$ and $M_r(g)$ are finite for some $r \in (1,2]$, then
\[
 d^*_{r}(f,g) \leq c_r[  M_r(f)+M_r(g)],
\]
for a suitable constant $c_r$.
\end{lemma}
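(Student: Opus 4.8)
The plan is to deduce the first two inequalities from the elementary identity $\tilde f(\xi)=\hat f(e^{-\xi})$, valid for every probability density on $\N$, which lets one pass freely between the supremum over $z\in(0,1)$ defining $d_r$ and the supremum over $\xi>0$ defining $d^*_r$; the third inequality is instead a one-line consequence of the Taylor-type estimate \eqref{D4}.

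For the bound $d^*_{r}(f,g)\leq C^r d_{r}(f,g)$, I would fix $\xi>0$, put $z=e^{-\xi}\in(0,1)$, and write
\[
\frac{|\tilde f(\xi)-\tilde g(\xi)|}{\xi^r}=\frac{|\hat f(z)-\hat g(z)|}{|1-z|^r}\left(\frac{1-e^{-\xi}}{\xi}\right)^{r}\leq d_{r}(f,g)\,C^r;
\]
taking the supremum over $\xi>0$ gives the claim, and one checks along the way that $C=1$ since $\xi\mapsto(1-e^{-\xi})/\xi$ is decreasing with limit $1$ at the origin, so $C$ is indeed finite. For the reverse bound, fix $z\in(0,1)$ and set $\xi=-\log z>0$. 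If $z\geq 1/2$, equivalently $0<\xi\leq\log 2$, the same identity gives $|\hat f(z)-\hat g(z)|/|1-z|^{r}\leq d^*_{r}(f,g)\,B^r$. If instead $z<1/2$, I would use that $\hat f$ and $\hat g$ are p.g.f.'s, so $|\hat f(z)-\hat g(z)|\leq 2$ while $|1-z|^{r}>2^{-r}$, whence that ratio is at most $2^{r+1}$. Splitting the supremum defining $d_{r}(f,g)$ at $z=1/2$ and retaining the larger of the two contributions yields $d_{r}(f,g)\leq\max\{2^{r+1},B^r d^*_{r}(f,g)\}$.

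For the last inequality, stated for general probability measures on $\RE^+$, I would simply observe that, writing $m_1:=M_1(f)=M_1(g)$, for every $\xi>0$
\[
|\tilde f(\xi)-\tilde g(\xi)|\leq|\tilde f(\xi)-1-\xi m_1|+|\tilde g(\xi)-1-\xi m_1|\leq c_r\xi^{r}\big(M_r(f)+M_r(g)\big)
\]
by two applications of \eqref{D4}; dividing by $\xi^r$ and taking the supremum over $\xi>0$ gives $d^*_{r}(f,g)\leq c_r\big(M_r(f)+M_r(g)\big)$. None of the steps presents a genuine difficulty; the only point requiring attention is the case split at $z=1/2$ (equivalently $\xi=\log 2$) in the second inequality, made necessary by the fact that $\xi/(1-e^{-\xi})$ blows up as $\xi\to\infty$ and so must be replaced there by the crude bound coming from $|\hat f|,|\hat g|\leq 1$.
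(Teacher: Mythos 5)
Your proof is correct and follows essentially the same route as the paper: the substitution $z=e^{-\xi}$ with the factor $((1-e^{-\xi})/\xi)^r$ for the first bound, the split of the supremum at $z=1/2$ (i.e.\ $\xi=\log 2$) combined with $|\hat f-\hat g|\leq 2$ for the second, and the insertion of the common linear term $1+\xi m_1$ together with the estimate \eqref{D4} for the third. The only (harmless) addition is your observation that $C=1$.
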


\begin{proof}
Noticing that $\hat f(e^{-\xi})=\tilde f(\xi)$, one can write
\[
\frac{|\tilde f(\xi)-\tilde g(\xi)|}{|\xi|^{r}}
=\frac{|\hat f(e^{-\xi})-\hat g(e^{-\xi}) |}{|1-e^{-\xi}|^r} \frac{|1-e^{-\xi}|^r}{|\xi|^{r}}.
\]
Hence
\(
 d^*_{r}(f,g) \leq C^r d_{r}(f,g)
\).
Moreover,
\[
\begin{split}
 d_{r}(f,g)& =\max \left \{ \sup_{s \in (0,1/2)}
 \frac{|\hat f(s)-\hat g(s) |}{|1-s|^r}, \sup_{s \in (1/2,1)}
 \frac{|\hat f(s)-\hat g(s) |}{|1-s|^r} \right\} \\
 &
 \leq
 \max \left \{ 2^{r+1}, \sup_{\xi \in (0,\log(2))}
  \frac{|\hat f(e^{-\xi})-\hat g(e^{-\xi}) |}{|\xi|^r}
  \frac{|\xi|^{r}}{|1-e^{-\xi}|^r}
 \right \}
 \\
  &
 \leq
 \max \left \{ 2^{r+1}, B^r d_{r}^*(f,g)
 \right \} .
 \\
 \end{split}
\]
Thanks to
\eqref{D3}
\[
\begin{split}
\frac{|\tilde f(\xi)-\tilde g(\xi)|}{|\xi|^{r}} &=
\frac{|\int (e^{-\xi v}-1+v\xi) f(dv)-\int (e^{-\xi v}-1+v\xi) g(dv)|}{|\xi|^{r}}
\\
&
\leq c_r [  M_r(f)+M_r(g)] .
\\
\end{split}
\]
\end{proof}

\subsection{Moments of random sums}

Let $V_1$ and $V_2$ two integer valued random variables.
Let $(X_k)_{k \geq 1}$ and $(Y_k)_{k \geq 1}$
two sequences of iid non-negative  random variables and assume that
$V_1,V_2$,\\$(X_k)_{k \geq 1}$,$(Y_k)_{k \geq 1}$ are independent.

\begin{lemma}\label{inequalitysum}
Under the previous assumptions let $r \in (1,2]$ and
assume that $\la Y^r +Y^r \ra<+\infty$ and $\la V_1^r+V_2^r \ra<+\infty$.
Then, if $r=2$
\begin{equation}\label{squarethesum}
\begin{split}
\la \Big (\sum_{i=1}^{V_1}
X_i +\sum_{i=1}^{V_2} Y_i  \Big )^2\ra  & =
\la V_1 \ra  Var(X_1)+\la V_2 \ra Var(Y_1) \\
&
+2  \la V_1 \ra \la V_2 \ra  \la X_1 \ra \la Y_2 \ra +
\la V_1^2  \ra \la X_1 \ra^2+\la V_2^2  \ra\la Y_1 \ra^2,
\\
\end{split}
\end{equation}
while if $r \in (1,2)$
\begin{equation}\label{inequalityrmoment}
\begin{split}
 \la \Big (    \sum_{i=1}^{V_1} X_i
+\sum_{i=1}^{V_2}
Y_i \Big )^r  \ra
&  \leq \la V_1 \ra\la  X_1^r \ra  + \la V_2 \ra\la  Y_1^r \ra +\\
  &
     \Big (2 \la V_1 \ra \la V_2 \ra  \la X_1 \ra \la Y_1\ra \Big)^{\frac{r}{2}}
+  \la V_1^r\ra  \la X_1 \ra^r  + \la V_2^r \ra \la Y_2 \ra^r.
\\
\end{split}
\end{equation}
\end{lemma}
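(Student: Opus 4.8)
The plan is to condition on the values of $V_1$ and $V_2$ and exploit the independence structure. Write $S = \sum_{i=1}^{V_1} X_i + \sum_{i=1}^{V_2} Y_i$, and set $A = \sum_{i=1}^{V_1} X_i$, $B = \sum_{i=1}^{V_2} Y_i$, so $S = A + B$. The case $r=2$ is a routine second-moment computation: conditionally on $V_1, V_2$, the sums $A$ and $B$ are independent, $\E[A \mid V_1] = V_1 \la X_1 \ra$, $\mathrm{Var}(A \mid V_1) = V_1 \mathrm{Var}(X_1)$, and similarly for $B$. Then $\E[S^2] = \E[\mathrm{Var}(A\mid V_1)] + \E[\mathrm{Var}(B\mid V_2)] + \E[(\E[A\mid V_1] + \E[B \mid V_2])^2]$, and expanding the last square using independence of $V_1$ and $V_2$ gives exactly \eqref{squarethesum}. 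First I would carry this out as the warm-up; it is entirely mechanical.

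For $r \in (1,2)$ the strategy is to split $S$ into a centered part plus its mean and use the von Bahr–Esseen inequality. Concretely, condition on $V_1 = n_1$, $V_2 = n_2$. The conditional mean is $\mu := n_1 \la X_1 \ra + n_2 \la Y_1 \ra$, and $S - \mu = \sum_{i=1}^{n_1}(X_i - \la X_1\ra) + \sum_{i=1}^{n_2}(Y_i - \la Y_1 \ra)$ is a sum of $n_1+n_2$ independent centered random variables. By the subadditivity inequality $\la |U+W|^r \ra \le 2^{r-1}(\la |U|^r\ra + \la |W|^r \ra)$ applied to $S = (S-\mu) + \mu$ — or more efficiently by the convexity bound $|a+b|^r \le$ a suitable combination — together with the von Bahr–Esseen inequality $\E\big|\sum_{i}(X_i - \la X_1\ra)\big|^r \le 2 \sum_i \E|X_i - \la X_1\ra|^r \le 2 n_1 \la X_1^r \ra$ (using $\E|X_1 - \la X_1\ra|^r \le \la X_1^r \ra$ for centered-at-mean moments with $r \le 2$, or absorbing the constant), one bounds the conditional $r$-th moment by a sum of a term linear in $n_1, n_2$ and the term $\mu^r$. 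Then I would take expectation over $V_1, V_2$: the linear term yields $\la V_1 \ra \la X_1^r \ra + \la V_2 \ra \la Y_1^r \ra$, and the term $\E[\mu^r] = \E[(V_1 \la X_1\ra + V_2 \la Y_1 \ra)^r]$ is handled again by the inequality $(u+v)^r \le$ (for the cross term $(2 uv)^{r/2}$ via AM–GM-type estimates, or by the elementary bound $(u+v)^r \le u^r + v^r + (2uv)^{r/2} \cdot c$) to produce the three remaining terms $\big(2\la V_1\ra\la V_2\ra \la X_1\ra \la Y_1\ra\big)^{r/2} + \la V_1^r\ra\la X_1\ra^r + \la V_2^r\ra \la Y_1\ra^r$.

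The main obstacle is bookkeeping the constants so that the final bound comes out with the \emph{exact} coefficients stated in \eqref{inequalityrmoment} — in particular getting the cross term to be precisely $\big(2\la V_1\ra\la V_2\ra\la X_1\ra\la Y_1\ra\big)^{r/2}$ with coefficient $1$ rather than some larger constant, and ensuring the von Bahr–Esseen constant $2$ does not leak into the leading linear terms. I expect this requires being careful to apply von Bahr–Esseen only to the \emph{centered} sums and to handle $\E[\mu^r]$ by a clean two-term estimate: writing $\mu^r = (n_1\la X_1\ra + n_2 \la Y_1\ra)^r$ and using that for $r \in (1,2)$ one has $(a+b)^r \le a^r + b^r + \text{(mixed terms)}$, where the mixed contribution is controlled by $r a^{r-1}b + \ldots$ and then bounded via Young's inequality to land on the $(2ab)^{r/2}$ shape. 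An alternative that may make the constants transparent is to note $\E[\mu^r] \le \E[(\mu^2)^{r/2}]$ is not directly helpful, so instead I would bound $\mu \le$ and expand; the cleanest route is probably to first establish the $r=2$ identity, then for general $r$ use the interpolation $\|S\|_r \le \|S\|_2$ on the centered part while keeping $\mu^r$ exact — this automatically produces the $(2\la V_1\ra\la V_2\ra\la X_1\ra\la Y_1\ra)^{r/2}$ term from the cross term of $\|S-\mu\|_2^2 + (\text{contribution of }\mu)$. Once the constants are pinned down the rest is routine.
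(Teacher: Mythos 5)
Your plan for the case $r=2$ is fine and matches what the paper implicitly relies on. For $r\in(1,2)$, however, your route (centering plus von Bahr--Esseen) has a concrete gap: it cannot reproduce the lemma with the stated constants, and one of the inequalities you lean on is false. First, the claim $\E[|X_1-\E[X_1]|^r]\le \E[X_1^r]$ fails for nonnegative variables: take $X_1$ Bernoulli of small parameter $p$, for which $\E[|X_1-p|^r]\approx p+p^r$ while $\E[X_1^r]=p$, and $p^{r-1}>rp$ for $p$ small. The best generic bound is $\E[|X_1-\E[X_1]|^r]\le 2^{r-1}(\E[X_1^r]+(\E[X_1])^r)\le 2^r\E[X_1^r]$, so after the two applications of von Bahr--Esseen (once to peel off the deterministic mean $\mu$ via $\E[|c+Z|^r]\le |c|^r+2\E[|Z|^r]$ for centered $Z$, once for the centered sum) the linear terms come out as $C(\E[V_1]\E[X_1^r]+\E[V_2]\E[Y_1^r])$ with $C$ of order $8$ to $16$, not $1$ as in \eqref{inequalityrmoment}. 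Your fallback of interpolating $\|S-\mu\|_r\le\|S-\mu\|_2$ is not available either, since only $r$-th moments of $X$ and $Y$ are assumed for $r<2$. To be fair, the part of your estimate that matters structurally does survive: handling $\E[\mu^r]=\E[(V_1\E[X_1]+V_2\E[Y_1])^r]$ by $(a^2+b^2+2ab)^{r/2}\le a^r+b^r+(2ab)^{r/2}$ and Jensen gives exactly $\E[V_1^r]\E[X_1]^r+\E[V_2^r]\E[Y_1]^r+(2\E[V_1]\E[V_2]\E[X_1]\E[Y_1])^{r/2}$ with coefficient one, which is the coefficient ($\Delta_r$) that the contraction arguments in the paper actually need; so your version would prove a weaker but still usable lemma. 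It just does not prove the lemma as stated.

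The paper's proof avoids centering altogether and is worth knowing: for fixed $n_1,n_2$ write the $r$-th power as the $(r/2)$-th power of the square, expand the square into five groups of nonnegative terms ($\sum_i X_i^2$, $\sum_i Y_i^2$, $2\sum_i X_i\sum_j Y_j$, $\sum_{i\ne j}X_iX_j$, $\sum_{i\ne j}Y_iY_j$), apply the subadditivity $(\sum_k z_k)^{r/2}\le\sum_k z_k^{r/2}$ valid for $r/2<1$ and nonnegative $z_k$, and then bound each expectation by Jensen's inequality for the concave map $t\mapsto t^{r/2}$. This yields the bound with all constants exactly as stated, and a final conditioning on $(V_1,V_2)$ together with one more application of Jensen ($\E[V_i^{r/2}]\le\E[V_i]^{r/2}$) finishes the proof. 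No von Bahr--Esseen and no centering is needed, which is precisely why no spurious constants appear.
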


\begin{proof} 
We only proof \eqref{inequalityrmoment}.
Let $n_1$ and $n_2$ two integer numbers. Since $r/2<1$, for any given set of positive numbers $z_1,\dots,z_n$,
$(\sum_{i=1}^n z_i \big)^{r/2} \leq \sum_{i=1}^n z_i^{r/2}$. This implies
\[
\begin{split}
&\la \Big (  \sum_{i=1}^{n_1} X_i+\sum_{i=1}^{n_2} Y_i   \Big )^{2 \frac{r}{2}}   \ra
=
\la \Big (
\Big ( \sum_{i=1}^{n_1} X_i \Big )^2   + \Big (\sum_{i=1}^{n_2} Y_i \Big )^2
+  2 \sum_{i=1}^{n_1} X_i \sum_{i=1}^{n_2} Y_i  \Big )^{\frac{r}{2}} \ra
\\
& =
\la \Big (
\sum_{i=1}^{n_1} X_i^2 + \sum_{i=1}^{n_2} Y_i^2 +  2\sum_{i=1}^{n_1} X_i \sum_{i=1}^{n_2} Y_i
+ \sum_{i\not =j,i,j=1 }^{n_1} X_i X_j+ \sum_{i\not =j,i,j=1 }^{n_2} Y_i Y_j
 \Big )^{\frac{r}{2}}
\ra
\\
&
\leq
\la
\sum_{i=1}^{n_1} X_i^r + \sum_{i=1}^{n_2} Y_i^r +  2^{\frac{r}{2}} \Big (\sum_{i=1}^{n_1}X_i \sum_{i=1}^{n_2}Y_i \Big)^{\frac{r}{2}} 
+  \Big ( \sum_{i\not =j,i,j=1 }^{n_1} X_i X_j\Big)^{\frac{r}{2}}  +
 \Big (\sum_{i\not =j,i,j=1 }^{n_2} Y_i Y_j\Big)^{\frac{r}{2}}
\ra
\\
&
\leq n_1 \la  X_1^r \ra +n_2 \la  Y_1^r \ra +
 2^{\frac{r}{2}}  \la \sum_{i=1}^{n_1} X_i \sum_{i=1}^{n_2} Y_i \ra^{\frac{r}{2}}
+  \la \sum_{i\not =j,i,j=1 }^{n_1} X_i X_j\ra^{\frac{r}{2}}  +
\la \sum_{i\not =j,i,j=1 }^{n_2} Y_i Y_j\ra^{\frac{r}{2}}
\\
& \leq n_1 \la  X_1^r \ra +n_2 \la  Y_1^r \ra +
 2^{\frac{r}{2}}  \Big (  n_1 n_2 \la X_1\ra \la Y_1\ra \Big)^{\frac{r}{2}}
+  n_1^r \la X_1 \ra^r  + n_2^r  \la Y_1\ra^r.
\\
\end{split}
\]
Note that we used Jensen's inequality to get the last line. To conclude let us apply the previous inequality conditionally on $V_1$ and $V_2$ to get
\[
\begin{split}
 &
 \la \Big (    \sum_{i=1}^{V_1} X_i
+\sum_{i=1}^{V_2}
Y_i \Big )^r  \ra
  \leq \la V_1 \ra\la  X_1^r \ra  + \la V_2 \ra\la  Y_1^r \ra +
       \la V_1^{\frac{r}{2}}  \ra \la V_2^{\frac{r}{2}}
      \ra   \Big (2 \la X_1 \ra \la Y_1\ra \Big)^{\frac{r}{2}}
+ \\ & \qquad\qquad\qquad\qquad\qquad\quad\la V_1^r\ra  \la X_1 \ra^r  + \la V_2^r \ra \la Y_2 \ra^r.
\\
\end{split}
\]
and then apply the Jensen's inequality one more time to get
$ \la V_i^{\frac{r}{2}}  \ra \leq \la V_i \ra^{\frac{r}{2}}$.
\end{proof}

\subsection{Gronwall's inequality}
A well-known version of the Gronwall lemma is the following.

\begin{lemma}\label{gronwall1}  Suppose that $t \mapsto u(t)$ and $t \mapsto h(t)$
are integrable functions on the interval $[0,T]$. If for some constant $C>0$
\[
u(t) \leq h(t) + C \int_0^t u(s)ds \qquad 0 \leq t \leq T,
\]
then, for every $t \in [0,T]$
\[
u(t) \leq h(t)+ C \int_0^t e^{C(t-s)}h(s)ds.
\]

\end{lemma}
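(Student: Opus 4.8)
The plan is to convert the integral inequality into a differential inequality for the running integral of $u$ and then solve it with the classical exponential integrating factor. Set $v(t):=\int_0^t u(s)\,ds$. Since $u$ is integrable on $[0,T]$, $v$ is absolutely continuous, $v(0)=0$, and $v'(t)=u(t)$ for almost every $t\in[0,T]$. The hypothesis $u(t)\le h(t)+C\int_0^t u(s)\,ds=h(t)+Cv(t)$ therefore gives $v'(t)-Cv(t)\le h(t)$ for a.e.\ $t$.

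Next I would multiply by the positive factor $e^{-Ct}$, so that $\frac{d}{dt}\bigl(e^{-Ct}v(t)\bigr)=e^{-Ct}\bigl(v'(t)-Cv(t)\bigr)\le e^{-Ct}h(t)$ almost everywhere on $[0,T]$. Since $t\mapsto e^{-Ct}v(t)$ is absolutely continuous, integrating this inequality over $[0,t]$ and using $v(0)=0$ yields $e^{-Ct}v(t)\le\int_0^t e^{-Cs}h(s)\,ds$, that is, $v(t)\le\int_0^t e^{C(t-s)}h(s)\,ds$. Substituting this bound back into $u(t)\le h(t)+Cv(t)$ produces exactly $u(t)\le h(t)+C\int_0^t e^{C(t-s)}h(s)\,ds$, which is the claim.

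A fully elementary alternative is to iterate the hypothesis: inserting the bound for $u$ into the term $C\int_0^t u(s)\,ds$ repeatedly gives, after $n$ steps, $u(t)\le h(t)+\sum_{k=1}^{n}C^{k}\int_0^t\frac{(t-s)^{k-1}}{(k-1)!}h(s)\,ds+E_n(t)$, where $E_n(t)=C^{n+1}\int_{0\le s_{n+1}\le\dots\le s_1\le t}u(s_{n+1})\,ds_{n+1}\cdots ds_1$ satisfies $|E_n(t)|\le C^{n+1}\frac{t^{n}}{n!}\int_0^t|u(s)|\,ds\to0$, while the surviving series sums to $C\int_0^t e^{C(t-s)}h(s)\,ds$. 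I do not expect a genuine obstacle here. The only points deserving a word of care are the passage from the pointwise inequality for $u$ to the a.e.\ differential inequality for $v$ (immediate from $v'=u$ a.e.) and the application of the fundamental theorem of calculus to the absolutely continuous function $e^{-Ct}v(t)$; both are standard, so the argument is essentially routine, and in particular no sign assumption on $h$ or $u$ is needed.
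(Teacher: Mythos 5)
Your proof is correct. The paper does not actually prove this lemma: it is stated in the appendix as ``a well-known version of the Gronwall lemma'' with no argument given, so there is nothing to compare against. Your integrating-factor derivation via $v(t)=\int_0^t u(s)\,ds$ is the standard one and is carried out carefully (the passage to the a.e.\ differential inequality and the use of absolute continuity of $e^{-Ct}v(t)$ are exactly the points that need mention, and you are right that no sign hypothesis on $u$ or $h$ is required); the iteration alternative you sketch is also sound, with the correct simplex-volume bound on the remainder.
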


A simple consequence of the previous Lemma is the following.

\begin{lemma}\label{gronwall2} Let $t \mapsto u(t)$ be a
positive measurable function on the interval $[0,+\infty)$
such that:
\begin{itemize}
\item[(i)]
$I=\{t : u(t)<+\infty\}$ is an open interval $(0,s_0)$,
\item[(ii)] $u$ is continuous on $I$
\item[(iii)] if  $s_0<+\infty$ then
\[
\lim_{t \to s_0^-}u(s)=+\infty.
\]
\end{itemize}
If there is a  non-negative continuous function $t \mapsto h(t)$  on
 $[0,+\infty)$ and a constant $C>0$ such that
\[
u(t) \leq h(t) + C \int_0^t u(s)ds,
\]
for every $t\geq 0$, then
\[
u(t) \leq h(t)+ C \int_0^t e^{C(t-s)}h(s)ds,
\]
for every $t >0$ and, in particular, $s_0=\infty$. Finally,
if $h(t)$ is of bounded variation then
\[
u(t) \leq e^{Ct}h(0)+\int_0^t e^{C(t-s)}dh(s).
\]
\end{lemma}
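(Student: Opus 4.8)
The plan is to reduce to the elementary Gronwall inequality of Lemma \ref{gronwall1}, which is already available on every bounded interval, and then exploit hypothesis (iii) to promote the resulting estimate from $[0,s_0)$ to the whole half-line. The one point that needs care is that $u$ is a priori only known to be finite on $I$, so the classical inequality cannot be invoked globally at the outset; finiteness on all of $[0,+\infty)$ will instead come out as a by-product, via (iii).

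First I would fix $T < s_0$. Evaluating the hypothesis $u(t) \le h(t) + C\int_0^t u(s)\,ds$ at $t=0$ gives $u(0) \le h(0) < +\infty$, so $0 \in I$; together with (ii) this shows $u$ is continuous, hence bounded and integrable, on $[0,T]$. The function $h$ is continuous on $[0,+\infty)$, hence also integrable on $[0,T]$, and the integral inequality holds for $0 \le t \le T$. Lemma \ref{gronwall1} then gives
\[
u(t) \le h(t) + C\int_0^t e^{C(t-s)} h(s)\,ds =: R(t), \qquad 0 \le t \le T,
\]
and, letting $T \uparrow s_0$, this bound holds for every $t \in [0,s_0)$.

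Next I would rule out $s_0 < +\infty$. Since $h$ is continuous on $[0,+\infty)$, the function $R$ is finite and continuous on all of $[0,+\infty)$; in particular $R(t) \to R(s_0) < +\infty$ as $t \to s_0^-$. Were $s_0$ finite, the bound $u \le R$ on $(0,s_0)$ would force $\limsup_{t\to s_0^-} u(t) \le R(s_0) < +\infty$, contradicting (iii). Hence $s_0 = +\infty$, $u$ is finite everywhere on $[0,+\infty)$, and $u(t) \le R(t)$ for all $t>0$: this is the first assertion. This bootstrap from a possibly-infinite $u$ to a global finite bound is the only genuinely delicate step; everything else is routine.

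Finally, for the refinement when $h$ has bounded variation, I would integrate by parts in the Riemann--Stieltjes sense. With $g(s) = -e^{C(t-s)}$, which is $C^1$ and satisfies $g'(s) = Ce^{C(t-s)}$, and using that $h$ is continuous and of bounded variation,
\[
C\int_0^t e^{C(t-s)} h(s)\,ds = \int_0^t h(s)\,dg(s) = \bigl[h(s)g(s)\bigr]_0^t - \int_0^t g(s)\,dh(s) = -h(t) + e^{Ct}h(0) + \int_0^t e^{C(t-s)}\,dh(s).
\]
Adding $h(t)$ to both sides identifies $R(t)$ with $e^{Ct}h(0) + \int_0^t e^{C(t-s)}\,dh(s)$, and $u(t) \le R(t)$ gives the last claim.
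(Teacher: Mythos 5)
Your proof is correct and follows essentially the same route as the paper's: both apply the elementary Gronwall inequality (Lemma \ref{gronwall1}) on compact subintervals of $[0,s_0)$, where $u$ is continuous and bounded, and then use hypothesis (iii) together with the finiteness of $h(t)+C\int_0^t e^{C(t-s)}h(s)\,ds$ near $s_0$ to rule out $s_0<+\infty$ by contradiction; the final bounded-variation claim is the same integration by parts the paper invokes, which you simply carry out explicitly.
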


\begin{proof} It is enough to prove that
$s_0=+\infty$. Indeed in this case the thesis follows from the previous Lemma.
Assume that $s_0<+\infty$. Let $H(t)=h(t)+ C \int_0^t e^{C(t-s)}h(s)ds$.
By (ii) and the fact that $H(t)$ is bounded on every finite interval,
 one can choose $\eps$ in such a way that
$u(s_0-\eps) > H(s_0-\eps)$. Now
$u$ is continuous in $(0,s_0)$ and hence it is bounded
on $(0,s_0-\eps)$. Hence, by the previous Lemma,
$u(s_0-\eps)\leq H(s_0-\eps)$ which gives a contradiction.
The last part of the proof follows resorting to integration by parts.
\end{proof}

\vskip 1cm
\noindent
{\textsc{Acknowledgements:}} This work has been written within the activities of GNFM  (G.T.) and GNAMPA (F.B.) groups of INdAM (National Institute of High Mathematics), and partially supported by  MIUR project ``Optimal mass transportation, geometrical and functional inequalities with applications'' (G.T.). 

\end{document}